\documentclass[letterpaper,12pt,oneside,reqno]{amsart}
\usepackage[T2A]{fontenc}%
\usepackage[utf8]{inputenc}%
\usepackage[english]{babel}%
\usepackage{amsmath,amssymb,amsthm,amsfonts}%
\usepackage{hyperref}%
\usepackage{graphicx}
\usepackage{enumerate}
\usepackage[mathscr]{euscript}
\usepackage{color,tikz}
\usepackage[DIV14]{typearea}
\usepackage[width=.9\textwidth]{caption}
\allowdisplaybreaks%
\numberwithin{equation}{section}%

\usepackage{tikz}
\usetikzlibrary{decorations.markings, arrows}

\newcommand{\Z}{\mathbb{Z}}
\renewcommand{\C}{\mathbb{C}}
\newcommand{\R}{\mathbb{R}}

\renewcommand{\i}{\mathbf{i}}

\newcommand{\al}{\alpha}

\newcommand{\la}{\lambda}


\DeclareMathOperator{\D}{\mathscr{D}}

\newcommand{\s}{\mathrm{Sign}}



\newtheorem{proposition}{Proposition}[section]
\newtheorem{lemma}[proposition]{Lemma}
\newtheorem{corollary}[proposition]{Corollary}
\newtheorem{theorem}[proposition]{Theorem}
\theoremstyle{definition}
\newtheorem{definition}[proposition]{Definition}
\newtheorem{remark}[proposition]{Remark}

\begin{document}

\title{On a family of symmetric rational functions}

\author[A. Borodin]{Alexei Borodin}
\address{Department of Mathematics, 
Massachusetts Institute of Technology,
77 Massachusetts ave.,
Cambridge, MA 02139, USA\newline
Institute for Information Transmission Problems, Bolshoy Karetny per. 19, Moscow, 127994, Russia}
\email{borodin@math.mit.edu}



\begin{abstract} This paper is about a family of symmetric rational functions that form a one-parameter generalization of the classical Hall-Littlewood polynomials. We introduce two sets of (skew and non-skew) functions that are akin to $P$ and $Q$ Hall-Littlewood polynomials. We establish (a) a combinatorial formula that represents our functions as partition functions for certain path ensembles in the square grid; (b) symmetrization formulas for non-skew functions; (c) identities of Cauchy and Pieri type; (d) explicit formulas for principal specializations; (e) two types of orthogonality relations for non-skew functions. 

Our construction is closely related to the half-infinite volume, finite magnon sector limit of the higher spin six-vertex (or XXZ) model, with both sets of functions representing higher spin six-vertex partition functions and/or transfer-matrices for certain domains.  
\end{abstract}

\maketitle

\setcounter{tocdepth}{2}
\tableofcontents
\setcounter{tocdepth}{2}

\section{Introduction}\label{sc:intro} The Hall-Littlewood symmetric polynomials are very well studied objects that arise naturally in a variety of group theoretic, representation theoretic, and combinatorial contexts; chapters II-V of Macdonald's book \cite{Macdonald1995} contain a detailed description of their origins as well as a rich structural theory. In the simplest instance, the Hall-Littlewood polynomials have the form
\begin{equation}\label{eq:hl}
P_\la(x_1,\dots,x_n)=\mathrm{const}(\la)\cdot \sum_{\sigma\in S_n}\sigma\left(\prod_{1\le i<j\le n} \frac{x_i-q x_j}{x_{i}-x_{j}}\prod_{i=1}^n x_i^{\la_i}\right),
\end{equation}
where the index $\la$ is a finite string of integers $\la_1\ge\la_2\ge\dots\ge 0$, $S_n$ is the symmetric group on $n$ symbols, permutations $\sigma\in S_n$ act on functions in $n$ variables by permuting the variables, and $q\in \C$ is a parameter.\footnote{This parameter is traditionally denoted by $t$, but in the context of the present paper, $q$ happens to be a much more natural notation.}

In recent years, a rational deformation of the Hall-Littlewood polynomials turned to be extremely useful in probability, more exactly, in large time analysis of certain interacting particle systems in (1+1)-dimensions. This deformation is obtained through replacing  $x_i^{\la_i}$ in the above formula by $((\alpha+\beta x_i)/(\gamma+\delta x_i))^{\la_i}$ with $\alpha,\beta,\gamma,\delta\in \C$.

In a pioneering work, Tracy and Widom \cite{TW_ASEP_Fredholm2008}-\cite{TW_total_current2009}  considered the case of $\alpha=\gamma=1$, $\beta\delta=q^{-1}$, and showed that the corresponding functions are eigenfunctions for the generator of the \emph{asymmetric simple exclusion process}, or ASEP for short. In the equivalent context of the infinite volume, finite magnon sector XXZ model, same functions were considered in a much earlier work of Babbitt and Gutkin \cite{BabbittGutkin1990}, \cite{Gutkin2000} (that followed similar but more extensive work of Babbitt and Thomas for the (less general) XXX model \cite{BabbittThomas1977}), but those papers did not contain complete proofs and remained essentially unnoticed. In a very recent work of Borodin-Corwin-Gorin \cite{BCG}, these functions were also utilized for asymptotic analysis of stochastic (spin $\frac 12$) six vertex model in a quadrant. 

A few years after the work of Tracy and Widom, the case of $\alpha=\beta=\gamma=1$, $\delta=0$, was considered by Borodin-Corwin-Petrov-Sasamoto \cite{BCPS1} in connection with the so-called $q$-TASEP and a $q$-Boson particle system (here `T' in `TASEP' stands for `totally', and total asymmetry means that particles in the system are allowed to move in only one direction). 

The fully general case was introduced by Povolotsky \cite{Povolotsky} and developed by Borodin-Corwin-Petrov-Sasamoto \cite{BCPS2} in connection with the so-called $q$-Hahn TASEP and a corresponding zero range process.

Let us also remark that the quantum integrable systems perspective on the Hall-Littlewood polynomials themselves was developed earlier by Van Diejen \cite{vanDiejen2004HL}, and degenerating Hall-Littlewood polynomials  further leads to classical works on the quantum delta Bose gas, see the introduction to \cite{BCPS2} and references therein. 

The essential property of the deformed functions that made them useful for probabilistic analysis (in addition to them being eigenfunctions for generators of interesting interacting particle systems) consisted in completeness and (bi)orthogonality of them viewed as functions of the index $\la$. This made it possible to explicitly construct and in some cases analyze at large times the transition matrices (or Green's functions) for the corresponding Markov  chains. However, from a structural viewpoint, these two properties (orthogonality and being eigenfunctions of a nice difference operator) are merely a tip of the iceberg of a wealth of algebraic and combinatorial facts that are available for Hall-Littlewood polynomials. 

The principal goal of the present work is to develop further the structural properties of the rational deformations of the Hall-Littlewood polynomials. 

If one takes into account harmless renormalizations of functions and variables, the four deformation parameters $\alpha,\beta,\gamma,\delta$ yield a single independent one, and we shall choose it in a specific way and denote it by $s$. The analog of \eqref{eq:hl} then reads
\begin{equation}\label{eq:F}
F_\mu(u_1,\dots,u_M)=\frac{(1-q)^M}{\prod_{i=1}^M (1-su_i)}\,\sum_{\sigma\in S_M}\sigma\left(\prod_{1\le i<j\le M}\frac{u_i-qu_j}{u_i-u_j}\cdot\prod_{i=1}^M \left(\frac{u_i-s}{1-su_i}\right)^{\mu_i}\right),
\end{equation}
where $\mu=(\mu_1\ge \dots\ge \mu_M)\in\Z_{\ge 0}^M$. Up to simple prefactors, these are the functions that have been previously considered, for different values of $s$, in the above referenced papers.\footnote{The prefactor in the right-hand side of \eqref{eq:F} was chosen so that the whole expression can be viewed as a certain partition function, see below. For other purposes, e.g., for Cauchy type identities like \eqref{eq:cauchy-intr} below, it might be more natural to consider normalized functions $F_\mu/F_{(0,\dots,0)}$.}

We introduce several new objects that are closely related to these functions. 

 $\bullet$\quad  We define a `dual' set of functions $G_\nu$ defined as follows:
For $\nu=(\nu_1\ge\dots\ge \nu_n)\in\Z_{\ge 0}^n$ with last $k\ge 0$ coordinates equal to 0, and any $N\ge 0$, we set
\begin{multline}\label{eq:G}
G_\nu(v_1,\dots,v_N)=\frac{(1-q)^N(s^2;q)_n}{(q;q)_{N-n+k}(s^2;q)_k}\\ \times \sum_{\sigma\in S_N}\sigma\left(\prod_{1\le i<j\le N} \frac{v_i-qv_j}{v_i-v_j}\cdot \prod_{i=1}^{n-k} \frac{v_i}{(1-sv_i)(v_i-s)}\left(\frac{v_i-s}{1-sv_i}\right)^{\nu_i}\cdot \prod_{j=n-k+1}^N\frac{1-q^ksv_j}{1-sv_j}\right).
\end{multline}

$\bullet$\quad  We introduce skew functions $F_{\mu/\la}$, $G_{\nu/\la}$, whose special cases with $\la=\varnothing$ or $\la=(0,\dots,0)$ coincide with $F_\mu$ and $G_\nu$, respectively, and show that all these functions can be defined combinatorially, as partition functions for ensembles of paths in the square grid with specific boundary conditions pictured in Figure \ref{fg:paths}. In the Hall-Littlewood limit $s=0$, this turns into the standard combinatorial formula involving summation over semi-standard Young tableaux, cf. \cite[Section III.5]{Macdonald1995}. 

In this combinatorial definition of the $F$- and $G$-functions, the weight of an ensemble of paths is given by the product of vertex weights over all vertices of the grid, and our vertex weights are close relatives of the matrix elements of the higher spin $R$-matrix for the XXZ (or six-vertex) integrable lattice model, with one representation of $U_q(\widehat{sl_2})$ being two-dimensional (spin $\frac 12$) and the other one being a generic Verma module (whose highest weight is related to the parameter $s$). In fact, the $F$-function \eqref{eq:F} can be viewed as the (half)infinite volume, finite magnon sector limit of the eigenfunctions of the higher spin XXZ model with periodic boundary conditions, and the summation over permutations in \eqref{eq:F} is related to the (coordinate or algebraic) Bethe ansatz for this model. 

The connection to the integrable lattice models was essential for us; it provided motivation as well as a broader viewpoint. However, familiarity with such models is not necessary for most statements and proofs of this work, with the exception of Theorem \ref{th:skew-R}.

$\bullet$\quad We employ a version of the Yang-Baxter equation (that is central to the theory of integrable lattice models) to prove several (skew) Cauchy and Pieri type identities involving our $F$- and $G$-functions. The fact that $F_\mu$'s are eigenfunctions of simple difference operators in $\mu$, which made them useful in probabilistic models, may be viewed as a corollary of one of the Pieri type identities. 
To give an example of our identities, the analog of the Cauchy identity has the form (Corollary \ref{cr:cauchy} below):
\begin{equation}\label{eq:cauchy-intr}
\frac{\prod_{i=1}^M(1-su_i)}{(s^2;q)_M}\sum_{\substack{\nu_1\ge\dots\ge \nu_M\ge 0\\ \nu=0^{n_0}1^{n_1}2^{n_2}\cdots}}\prod_{k\ge 0} \frac{(s^2;q)_{n_k}}{(q;q)_{n_k}} \,F_\nu(u_1,\dots,u_M)G_\nu(v_1,\dots,v_N)=\prod_{\substack{1\le i\le M\\1\le j\le N}} \frac{1-qu_iv_j}{1-u_iv_j}\,.
\end{equation}

$\bullet$\quad We also show that the so-called principal specialization into a geometric progression with ratio $q$ of the skew $G$-functions can be viewed as the (half)infinite volume, finite magnon sector transfer-matrix of the higher spin XXZ model with both representations of $U_q(\widehat{sl_2})$ being arbitrary; this is related to the well-known fusion procedure of Kirillov-Reshetikhin \cite{Kirillov-Reshetikhin} for the higher spin XXZ models. 

Thus, our focus in this paper is a one-parameter generalization of the Hall-Littlewood theory. There is another one-parameter generalization of Hall-Littlewood polynomials known as Macdonald polynomials, cf. \cite[Chapter VI]{Macdonald1995}. These two generalizations 
seems to be completely different at the moment, and it is natural to conjecture that there should be a two-parameter lift of the Hall-Littlewood theory that would unite the two. 

One possible direction that could help in finding such a connection is the theory of Hecke algebras. In a recent work, Takeyama \cite{Takeyama1}, \cite{Takeyama2} showed that the $F$-functions \eqref{eq:F} are closely related to certain rational deformations of the affine Hecke algebras. Turning `affine' to `double affine' may lead to a common generalization of the functions in this paper and Macdonald polynomials, but for now this remains out of our reach. 

Our $F$- and $G$-functions have a few degenerations as the parameters $q$ and $s$ tend to certain special values, and some of those appear to be new, cf. Section \ref{sc:degeneration}. The case of inhomogeneous Schur polynomials discussed in Section \ref{ss:ischur} bears a certain similarity to a recent work of Motegi and Sakai \cite{MotegiSakai1}, \cite{MotegiSakai2} on the so-called Grothendieck polynomials, see also Lascoux and Schützenberger \cite{LS}, Lenart \cite{Lenart} for much earlier works on those polynomials, but we were not able to establish a direct connection yet.  

Another recent work that seems to be related to the present one on the level of ideas, but not yet directly, is that of Betea, Wheeler, and Zinn-Justin \cite{BW}, \cite{BWZ}. 

The paper is organized as follows. 

In Section \ref{sc:weights} we introduce vertex weights and establish their connection with $R$-matrices for the higher spin XXZ model. 
In Section \ref{sc:functions} we define the $F$- and $G$-functions as partition functions of certain collections of paths in the square grid, and show that these functions are symmetric in their parameters. 
Section \ref{sc:cauchy} contains (skew) Cauchy and Pieri type identities.
In Section \ref{sc:symmetrization} we prove symmetrization formulas \eqref{eq:F} and \eqref{eq:G}. Section \ref{sc:principal} deals with principal specializations of $F$- and $G$-functions and their connection to fully general higher spin XXZ $R$-matrices and fusion. 
In Section \ref{sc:orthogonality} we discuss orthogonality relations for the $F$-functions \eqref{eq:F} proved earlier in \cite{BCPS2} and their connection to the present work. 
Section \ref{sc:degeneration} contains a brief description of degenerations of $F$- and $G$-functions as parameters $q$ and $s$ tend to special values.

\subsection*{Acknowledgments} I am very grateful to Ivan Corwin, Vadim Gorin, and Leonid Petrov for numerous discussions that were extremely helpful. I am also very grateful to Ole Warnaar for a number of very valuable remarks. The research was partially supported by NSF grant DMS-1056390.

\section{Vertex weights}\label{sc:weights}
We start by fixing two parameters that we denote by $q$ and $s$. They should be viewed as complex numbers with the condition of being generic --- vanishing of certain algebraic expressions in $q$ and $s$ may make some of our statements below meaningless. As a rule, we will not focus on such degenerations. 

\begin{definition}\label{df:weights} For any four-tuple $(i_1,j_1;i_2,j_2)$ of nonnegative integers, define the corresponding \emph{vertex weight} depending on a (generic) complex parameter $u$ as follows: For any $m\ge 0$,
\begin{align}
w_u(m,0,m,0)&=\frac{1-sq^m u}{1-su}, \\
w_u(m,1,m,1)&=\frac{u-sq^m}{1-su}, \\
w_u(m+1,0,m,1)&=\frac{(1-s^2q^m)u}{1-su},\\
w_u(m,1,m+1,0)&=\frac{1-q^{m+1}}{1-su},\label{eq:weight4}
\end{align}
and $w_u(i_1,j_1;i_2,j_2)=0$ for any other values of $i_1,j_1,i_2,j_2\ge 0$. 
\end{definition}

We shall also represent vertices of type $(i_1,j_1;i_2,j_2)$ pictorially as in Figure \ref{fg:vertex}, where $i_1,j_1,i_2,j_2$ denote the number of arrows on South, West, North, and East edges, respectively. 

\begin{figure}
\includegraphics[scale=1]{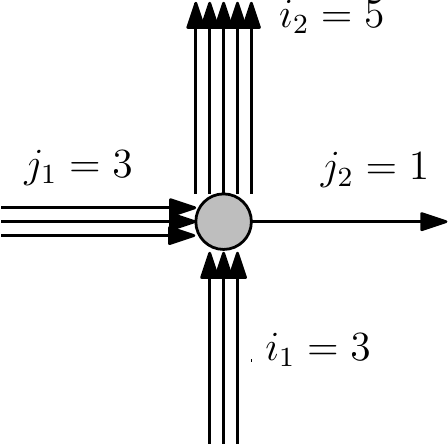}
\caption{Graphical representation of a vertex of type $(i_1,j_1;i_2,j_2)=(3,3;5,1)$.}\label{fg:vertex}
\end{figure}

\begin{remark}\label{rm:arrow_preserve} \textbf{(i)} The set of four-tuples $(i_1,j_1;i_2,j_2)\in\Z_{\ge 0}^4$ whose weights are (generically) nonzero are described by two conditions: $i_1+j_1=i_2+j_2$, and $j_1,j_2\le 1$. The first condition is the `arrow preservation' --- for every vertex with nonzero weight, the number of incoming arrows is equal to the number of outgoing ones. This arrow preservation will be upheld throughout the paper. The second condition says that each horizontal edge carries at most one arrow. This condition will remain relevant until Section \ref{sc:principal}, where it will be lifted, and vertices with arbitrary $(i_1,j_1;i_2,j_2)\in\Z_{\ge 0}^4$ subject to $i_1+j_1=i_2+j_2$ will be allowed to have nonzero weights, cf. Corollary \ref{cr:skew-G}. 

\noindent \textbf{(ii)} The normalization (i.~e., the common denominator $1-su$) is chosen so that $w_u(0,0;0,0)=1$, cf. Remark \ref{rm:partition_function} below. 
\end{remark}
The above-defined vertex weights are closely related to matrix elements of the higher spin $R$-matrix associated with $U_q(\widehat{sl_2})$. To make the connection exact, we need to fix a normalization of the $R$-matrix, and we do so by utilizing the $R$-matrices of \cite{Manga}. There $R_{I,J}$ denotes the image of the universal $R$-matrix in the tensor product of two highest weight representations with arbitrary weights $I$ and $J$ with a particular choice of bases. 

\begin{remark} As was noted in the introduction, familiarity with the theory of integrable lattice models ($R$-matrices etc.) is not really needed for almost all statements and proofs of this paper (with the exception of Theorem \ref{th:skew-R}). Proposition \ref{pr:w-to-R} below details the connection of vertex weights of Definition \ref{df:weights} and $R$-matrices. It is used in our proofs of Proposition \ref{pr:YB} and Theorem \ref{th:skew-R}, but one can also verify the claim of Proposition \ref{pr:YB} in a completely elementary fashion by multiplying $4\times 4$ matrices, as indicated in the beginning of its proof. Thus, a reader could safely omit Proposition \ref{pr:w-to-R} as well as all other mentions of the $R$-matrices without much damage to the content of this work. 
\end{remark}

An explicit formula for $R_{I,J}$ can be seen in (1.1)-(1.3) of \cite{Manga}, where it is assumed that $I\le J$ are nonnegative integers; a more general formula is in \cite[(5.8)-(5.9)]{Manga}. We shall always assume the `field parameter' $\phi$ of \cite{Manga} to be equal to 1, and we shall also re-denote the parameter $q$ from \cite{Manga} by $Q$; it is related to our $q$ above via $Q^2=q$. 

With these conventions we have the following
\begin{proposition}\label{pr:w-to-R} Let $R_{I,J}$ be as in \cite{Manga} with $q$ replaced by $Q$. Then for any $i_1,j_2,i_2,j_2\ge 0$,
\begin{equation}\label{eq:w-to-R}
w_u(i_1,j_1;i_2,j_2)=const\cdot (-1)^{j_1}Q^{\frac{i_2(i_2-1)-i_1(i_1-1)+2i_2+j_1-j_2}2} u^{\frac{j_2-j_1}{2}}\cdot
{\left[R_{I,1}(\la;1)\right]}_{i_1,j_1}^{i_2,j_2},
\end{equation}
where $q=Q^2$, the `spectral' parameter $\la$ is chosen so that $\la^2=(uQ)^{-1}$, and $$const=\frac 1{(1-su)\lambda\, Q^{\frac{1+I}2}}\,.$$ 
\end{proposition}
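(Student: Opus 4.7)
The plan is to verify equation \eqref{eq:w-to-R} by direct case-by-case comparison, using the explicit formulas for $R_{I,J}(\lambda;\phi)$ given in \cite{Manga} with $\phi=1$ and $J=1$. There is no structural theorem to invoke here: both sides are written out entry by entry, and the task is simply to match them.

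First I would fix the parameter dictionary. Since $J=1$, the second representation is two-dimensional, so $j_1,j_2\in\{0,1\}$; combined with the arrow-preservation rule $i_1+j_1=i_2+j_2$ built into the $R$-matrix, the only nonzero entries are the four families listed in Definition \ref{df:weights}, namely $(m,0;m,0)$, $(m,1;m,1)$, $(m{+}1,0;m,1)$, and $(m,1;m{+}1,0)$; all other matrix elements of $R_{I,1}$ vanish, matching the zero assignments in Definition \ref{df:weights}. The parameter $s$ of the present paper must be identified with an appropriate power of $Q$ encoding the weight $I$ of the first (generic Verma) representation of $U_q(\widehat{sl_2})$; this identification is forced by the appearance of $Q^{(1+I)/2}$ in $const$ and of $(1-su)$ in the denominator of $w_u$, and boils down to $s\leftrightarrow Q^{-I}$ (up to sign conventions) when one compares the diagonal entry of $R$ on the highest weight vector with $w_u(0,0;0,0)=1$.

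Next, I would substitute the formulas (1.1)--(1.3) (or (5.8)--(5.9)) of \cite{Manga} into the four nontrivial cases, translate the spectral parameter via $\lambda^2=(uQ)^{-1}$, and collect the overall prefactors. For the case $(m,0;m,0)$ the exponent of $Q$ on the right-hand side of \eqref{eq:w-to-R} equals $m$, the power of $u$ is $0$, and the $R$-matrix entry reduces, after the substitution, to a multiple of $1-sq^m u$; combining with $const$ produces exactly $(1-sq^m u)/(1-su)$. The three remaining cases are analogous: the sign $(-1)^{j_1}$ accounts for the sign that appears on the off-diagonal $b$/$c$-type entries of the higher-spin $R$-matrix, the half-integer $u$-power $u^{(j_2-j_1)/2}$ absorbs the leftover $\lambda$-dependence via $\lambda^{-1}=(uQ)^{1/2}$, and the half-integer power of $Q$ is what is needed to clear the symmetric $Q$-normalization used in \cite{Manga} and turn it into the asymmetric normalization $w_u(0,0;0,0)=1$.

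The computation is entirely routine. The only real obstacle is notational: reconciling the indexing conventions of \cite{Manga} (which rows/columns correspond to incoming vs.\ outgoing indices, and the precise placement of $q$-Pochhammer prefactors in the Verma-module basis vectors) with the conventions of Definition \ref{df:weights}, and checking that the single constant $const=\bigl((1-su)\lambda Q^{(1+I)/2}\bigr)^{-1}$ is the same across all four families (i.e.\ does not secretly depend on $m$, $i_1$, or $j_1$). Once the dictionary is pinned down on one case, the remaining three are one-line verifications.
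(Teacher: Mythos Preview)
Your proposal is correct and takes essentially the same approach as the paper: the paper's proof consists of the single sentence ``A direct comparison of Definition~\ref{df:weights} above and \cite[(5.2) and (5.10)]{Manga},'' which is exactly the case-by-case verification you outline. Your identification $s=Q^{-I}$ is the right one (it reappears explicitly in Theorem~\ref{th:skew-R}), and the only minor discrepancy is that the paper points to (5.2) and (5.10) of \cite{Manga} rather than (1.1)--(1.3) or (5.8)--(5.9), but these are equivalent sources for the $J=1$ entries.
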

\begin{proof} A direct comparison of Definition \ref{df:weights} above and \cite[(5.2) and (5.10)]{Manga}. 
\end{proof}
 It is natural to ask why we need a different set of weights here rather than using the $R$-matrix itself. The answer lies in simpler explicit formulas for the symmetric functions involved that are also easier to relate to the Hall-Littlewood polynomials, see Sections \ref{sc:symmetrization} and \ref{sc:degeneration} below.

Our next goal is to describe the Yang-Baxter equation\footnote{The exact meaning of the term `Yang-Baxter equation' may depend on the context in which it is used. Our usage is close to what is called the \emph{star-triangle transformation} in the context of the six vertex model in \cite{Baxter1986}.}  in terms of our vertex weights. 
To that end, define the \emph{two-vertex weights} by 
\begin{equation}\label{eq:two-vertex}
w_{u_1,u_2}^{(m,n)}(k_1,k_2;k_1',k_2')=\sum_{l\ge 0} w_{u_1}(m,k_1;l,k_1')w_{u_2}(l,k_2;n,k_2'),
\end{equation}
where $u_1,u_2\in \C$, $l,m,n\in \Z_{\ge 0}$, $k_1,k_2,k_1',k_2'\in\{0,1\}$. This is the weight of two vertices $(m,k_1;l,k_1')$ and $(l,k_2;n,k_2')$ attached along the $l$-edges with $l\ge 0$ being arbitrary, cf. Figure \ref{fg:two-vertices}. Note that the sum over $l\ge 0$ contains at most one nonzero term, because for both factors to be nonzero we must have 
\begin{equation}\label{eq:arrow_preserve}
l=m+k_1-k_1'=n+k_2'-k_2,
\end{equation}
cf. Remark \ref{rm:arrow_preserve}(i).

\begin{figure}
\includegraphics[scale=1.2]{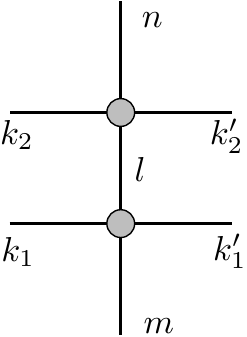}
\caption{Graphical representation of two vertices as in \eqref{eq:two-vertex}.}\label{fg:two-vertices}
\end{figure}

We also set
$$
\widetilde w_{u_1,u_2}^{(m,n)}(k_1,k_2;k_1',k_2')=w_{u_1,u_2}^{(m,n)}(k_2,k_1;k_2',k_1').
$$
As $k_j,k_j'$ vary over $\{0,1\}$, let us organize these weights into $4\times 4$ matrices
$$
w_{u_1,u_2}^{(m,n)}=\begin{bmatrix} 
w_{u_1,u_2}^{(m,n)}(0,0;0,0)&w_{u_1,u_2}^{(m,n)}(0,0;0,1)&w_{u_1,u_2}^{(m,n)}(0,0;1,0)&w_{u_1,u_2}^{(m,n)}(0,0;1,1)\\
w_{u_1,u_2}^{(m,n)}(0,1;0,0)&w_{u_1,u_2}^{(m,n)}(0,1;0,1)&w_{u_1,u_2}^{(m,n)}(0,1;1,0)&w_{u_1,u_2}^{(m,n)}(0,1;1,1)\\
w_{u_1,u_2}^{(m,n)}(1,0;0,0)&w_{u_1,u_2}^{(m,n)}(1,0;0,1)&w_{u_1,u_2}^{(m,n)}(1,0;1,0)&w_{u_1,u_2}^{(m,n)}(1,0;1,1)\\
w_{u_1,u_2}^{(m,n)}(1,1;0,0)&w_{u_1,u_2}^{(m,n)}(1,1;0,1)&w_{u_1,u_2}^{(m,n)}(1,1;1,0)&w_{u_1,u_2}^{(m,n)}(1,1;1,1)
\end{bmatrix},
$$
and similarly for $\widetilde w_{u_1,u_2}^{(m,n)}$.

\begin{proposition}[Yang-Baxter equation]\label{pr:YB} For any $m,n\ge 0$, $u_1,u_2\in \C$, 
\begin{equation}\label{eq:YB-w}
\widetilde w_{u_2,u_1}^{(m,n)}=X\, w_{u_1,u_2}^{(m,n)}\, X^{-1},
\end{equation}
where 
\begin{equation}\label{eq:matrix_X}
X=\begin{bmatrix}  
1&0&0&0\\ \\
0&\dfrac{q(u_1-u_2)}{u_1-qu_2}& \dfrac{(1-q)u_1}{u_1-qu_2}&0\\ \\
0&\dfrac{(1-q)u_2}{u_1-qu_2}&\dfrac{u_1-u_2}{u_1-qu_2}&0\\ \\
0&0&0&1
\end{bmatrix}.
\end{equation}
\end{proposition}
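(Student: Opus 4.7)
The plan is to verify \eqref{eq:YB-w} as a $4 \times 4$ matrix equation. Writing it as $\widetilde w_{u_2,u_1}^{(m,n)} X = X\, w_{u_1,u_2}^{(m,n)}$, there are $16$ scalar identities, indexed by $(k_1,k_2;k_1',k_2') \in \{0,1\}^4$, to check.

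First I would compute the entries of both $w_{u_1,u_2}^{(m,n)}$ and $\widetilde w_{u_2,u_1}^{(m,n)}$ explicitly from \eqref{eq:two-vertex} and Definition \ref{df:weights}. By arrow preservation (Remark \ref{rm:arrow_preserve}(i)), the sum in \eqref{eq:two-vertex} has at most one nonzero term: the intermediate index $l$ is forced to be $l = m + k_1 - k_1' = n + k_2' - k_2$, and the entry vanishes unless these two expressions agree and $l \geq 0$. Each surviving entry then becomes an explicit rational function of $u_1, u_2, s, q^m, q^n$.

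Second, I would exploit the block structure of $X$. The matrix $X$ in \eqref{eq:matrix_X} is block-diagonal with respect to the grading $k_1 + k_2$: it acts as the identity on the one-dimensional sectors spanned by $(0,0)$ and $(1,1)$, and nontrivially only on the central two-dimensional sector spanned by $(0,1),(1,0)$. Accordingly, the matrix identity decouples into independent sector-pair components indexed by $(k_1+k_2,\, k_1'+k_2') \in \{0,1,2\}^2$. In the four corner sector-pairs $(0,0),(0,2),(2,0),(2,2)$, both $X$ and $X^{-1}$ act trivially on the relevant rows and columns, so the identity reduces to the entry-wise equality of $\widetilde w$ and $w$, which follows immediately by inspection from the $u_1 \leftrightarrow u_2$ symmetry of the products of vertex weights in those sectors. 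The four edge sector-pairs (one grading in $\{0,2\}$, the other equal to $1$) each give a two-component vector identity, while the center sector-pair $(1,1)$ --- nontrivial only when $m = n$ --- gives a $2 \times 2$ matrix identity.

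The main obstacle is the center $2 \times 2$ block when $m = n$: its four entries are sums of products of two one-vertex weights each, and after multiplying through by the common denominator $(1-su_1)(1-su_2)(u_1-qu_2)$ the identity becomes a set of polynomial identities in $u_1, u_2, s, q^m$ that can be checked by direct expansion. A conceptually cleaner route is to invoke Proposition \ref{pr:w-to-R}: the two-vertex weight is, up to explicit scalar factors, a product of matrix elements of $R_{I,1}$ evaluated at two spectral parameters; $X$ matches the spin-$\tfrac{1}{2}$ $R$-matrix $R_{1,1}$ at the appropriate spectral parameter; and \eqref{eq:YB-w} then follows from the universal Yang--Baxter equation for $U_q(\widehat{sl_2})$ applied to $V_I \otimes \C^2 \otimes \C^2$. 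Matching the scalar prefactors in \eqref{eq:w-to-R} with the normalizations of \cite{Manga} is the only subtle point in this alternative approach.
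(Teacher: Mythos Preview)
Your proposal is correct and essentially mirrors the paper's own proof: the paper explicitly notes that one can check \eqref{eq:YB-w} directly (``it may be a bit tedious as one needs to go over the cases $m=n,n\pm 1,n\pm 2$''), which is precisely your sector-by-sector verification, and then opts for the $R$-matrix derivation via Proposition~\ref{pr:w-to-R} and the Yang--Baxter equation \cite[(4.8)]{Manga}, which is exactly your ``conceptually cleaner route.'' The only difference is emphasis --- you lead with the direct check and mention the $R$-matrix argument as an alternative, while the paper does the reverse --- but both approaches are present in each.
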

\begin{proof} This relation is easy to check directly, although it may be a bit tedious as one needs to go over the cases $m=n,n\pm 1, n\pm 2$. Let us instead derive it from the Yang-Baxter equation for the $R$-matrices. Its special case that is relevant to us now reads, cf. \cite[(4.8)]{Manga},
\begin{equation}\label{eq:YB-R}
R_{1,I}^{(21)}(\la_1;1)R_{1,I}^{(31)}(\la_2;1)R_{1,1}^{(23)}\left(\frac{\la_2}{\la_1};1\right)=R_{1,1}^{(23)}\left(\frac{\la_2}{\la_1};1\right)R_{1,I}^{(31)}(\la_2;1)R_{1,I}^{(21)}({\la_1};1),
\end{equation}
where the equality takes place in the tensor product of the highest weight representation with weight $I$ that we denote as $V_I$ and $\C^2 \otimes \C^2$, and (omitting zeroes)
\begin{equation}\label{eq:R11}
R_{1,1}(\mu)=\begin{bmatrix}
\mu Q-(\mu Q)^{-1}&&&\\
&\mu-\mu^{-1}&Q-Q^{-1}&\\
&Q-Q^{-1}&\mu-\mu^{-1}&\\
&&&\mu Q-(\mu Q)^{-1}\\
\end{bmatrix}.
\end{equation}
The conjugating matrix $X$ in \eqref{eq:YB-w} is the inverse of $R_{1,1}$ modified according to Proposition \ref{pr:w-to-R}. More exactly, set $u_1=(\la_1^2 Q)^{-1}$, $u_2=(\la_2^2 Q)^{-1}$, and $\mu=\la_2/\la_1$ in \eqref{eq:R11}. Then, using \eqref{eq:arrow_preserve} and its analog for $\widetilde w$, we obtain
\begin{multline*}
w_{u_1,u_2}^{(m,n)}(k_1,k_2;k_1',k_2')=const^2 (-1)^{k_1+k_2} Q^{\frac{n(n-1)-m(m-1)}{2}+2n}
Q^{\frac{(k_1-k_2)-(k_1'-k_2')}{2}}u_1^{\frac{k_1'-k_1}{2}}u_2^\frac{k_2'-k_2}{2}\\ \times \left[R_{1,I}^{(21)}(\la_1;1)R_{1,I}^{(31)}(\la_2;1)\right]_{k_1,k_2}^{k_1',k_2'},
\end{multline*}
\begin{multline*}
\widetilde w_{u_2,u_1}^{(m,n)}(k_1,k_2;k_1',k_2')=const^2 (-1)^{k_1+k_2} Q^{\frac{n(n-1)-m(m-1)}{2}+2n}
Q^{\frac{(k_2-k_1)-(k_2'-k_1')}{2}}u_1^{\frac{k_1'-k_1}{2}}u_2^\frac{k_2'-k_2}{2}\\ \times \left[R_{1,I}^{(31)}(\la_2;1)R_{1,I}^{(21)}({\la_1};1)\right]_{k_1,k_2}^{k_1',k_2'}.
\end{multline*}
The sub- and super-indices $(k_1,k_2)$ and $(k_1',k_2')$ refer to the basis labels in $\C^2\otimes \C^2$, which are ordered as $(0,0),(0,1),(1,0),(1,1)$. We thus obtain
\begin{multline*}
\begin{bmatrix}1&&&\\
&-\left(\frac 1{Qu_2}\right)^\frac12&&\\
&&-\left(\frac Q{u_1}\right)^\frac12&\\
&&&\frac 1{(u_1u_2)^{\frac12}}
\end{bmatrix}^{-1}
w_{u_1,u_2}^{(m,n)}
\begin{bmatrix}
1&&&\\
&-\left( {u_2}Q\right)^\frac12&&\\
&&-\left(\frac{u_1}Q\right)^\frac12&\\
&&&(u_1u_2)^\frac12
\end{bmatrix}^{-1}R_{1,1}\left(\frac{\la_2}{\la_1};1\right)\\
=R_{1,1}\left(\frac{\la_2}{\la_1};1\right)\begin{bmatrix}1&&&\\
&-\left(\frac Q{u_2}\right)^\frac12&&\\
&&-\left(\frac 1{Qu_1}\right)^\frac12&\\
&&&\frac 1{(u_1u_2)^{1/2}}
\end{bmatrix}^{-1}
\widetilde w_{u_2,u_1}^{(m,n)} \begin{bmatrix}
1&&&\\
&-\left(\frac{u_2}Q\right)^{\frac12}&&\\
&&-\left( {Q u_1}\right)^\frac12&\\
&&&(u_1u_2)^{\frac 12}
\end{bmatrix}^{-1}.
\end{multline*}
Simplifying yields \eqref{eq:YB-w}.
\end{proof}

For a future use we record the following computation, cf. \eqref{eq:YB-w}.
\begin{lemma}\label{lm:} Let $A=[A_{ij}]_{i,j=1}^4$ be a $4\times 4$ matrix and $X$ be as in \eqref{eq:matrix_X}. Then
\begin{gather}
(XAX^{-1})_{11}=A_{11},\qquad (XAX^{-1})_{41}=A_{41},\label{eq:lemma1}\\
(XAX^{-1})_{42}=\frac{u_2-u_1}{u_2-qu_1}A_{42}+\frac{(1-q)u_2}{u_2-qu_1}A_{43}.\label{eq:lemma2}  
\end{gather}
\end{lemma}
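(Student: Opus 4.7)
The plan is to exploit the block structure of $X$: it is block diagonal with a $1\times1$ block, a $2\times 2$ block on rows/columns $(2,3)$, and another $1\times 1$ block, since the first and fourth rows and columns of $X$ coincide with those of the $4\times 4$ identity. Consequently $X^{-1}$ shares the same block pattern, with first column equal to $e_1$ and fourth column equal to $e_4$, and with an explicit $2\times 2$ middle block $B^{-1}$.

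First I would compute $B^{-1}$. Writing $B$ for the middle block of $X$, a direct calculation gives $\det B = (qu_1-u_2)/(u_1-qu_2)$ (via $(u_1-qu_2)(qu_1-u_2)=q(u_1-u_2)^2-(1-q)^2 u_1u_2$), whence
\[
B^{-1}=\frac{1}{qu_1-u_2}\begin{bmatrix}u_1-u_2 & -(1-q)u_1\\ -(1-q)u_2 & q(u_1-u_2)\end{bmatrix}.
\]
Equivalently, the relevant entries of $X^{-1}$ are $(X^{-1})_{11}=(X^{-1})_{44}=1$, $(X^{-1})_{i1}=(X^{-1})_{i4}=0$ for $i\ne1,4$, and in particular the column of $X^{-1}$ indexed by $2$ has entries $(X^{-1})_{22}=(u_1-u_2)/(qu_1-u_2)$, $(X^{-1})_{32}=-(1-q)u_2/(qu_1-u_2)$, $(X^{-1})_{12}=(X^{-1})_{42}=0$.

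Now the three claimed identities follow by direct evaluation. Since the $4$-th row of $X$ equals $e_4^T$ and the $1$-st row equals $e_1^T$, we have $(XA)_{4j}=A_{4j}$ and $(XA)_{1j}=A_{1j}$. Combined with the first column of $X^{-1}$ being $e_1$, this yields $(XAX^{-1})_{11}=A_{11}$ and $(XAX^{-1})_{41}=A_{41}$, which are \eqref{eq:lemma1}. For \eqref{eq:lemma2},
\[
(XAX^{-1})_{42}=\sum_{k=1}^{4} A_{4k}(X^{-1})_{k2}=\frac{u_1-u_2}{qu_1-u_2}\,A_{42}-\frac{(1-q)u_2}{qu_1-u_2}\,A_{43},
\]
and multiplying numerator and denominator of each fraction by $-1$ rewrites this as the right-hand side of \eqref{eq:lemma2}.

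There is no real obstacle: the argument is a two-line bookkeeping check once $X^{-1}$ is known, and the only mildly non-trivial step is the factorization of $\det B$, which reduces to the identity $q(u_1-u_2)^2-(1-q)^2u_1u_2=(u_1-qu_2)(qu_1-u_2)$. All remaining entries of $A$ do not participate, precisely because columns $1$ of $X^{-1}$ and rows $1,4$ of $X$ are standard basis vectors.
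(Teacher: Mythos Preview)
Your proposal is correct and is precisely the direct computation that the paper alludes to: the paper's own proof consists of the single sentence ``The proof is straightforward,'' and your argument supplies exactly the bookkeeping behind that assertion. The block structure you identify, the computation of $B^{-1}$, and the evaluation of the three matrix entries are all accurate.
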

The proof is straightforward. 

\section{Symmetric rational functions}\label{sc:functions}
The goal of this section is to define certain rational functions in finitely many variables and to show that these functions are symmetric with respect to permutations of the variables. 

We shall define two families of functions, with functions in each family parametrized by pairs of nonnegative signatures. A \emph{signature} is a finite string of ordered integers $\la=(\la_1\ge \la_2\ge \dots \ge \la_L)$.\footnote{I apologize for the use of ``$\la$'' here, given that it was utilized in the previous section to denote the spectral parameter of the $R$-matrix. The notation is traditional in both places, and I find it hard to avoid.} The length $L$ of the string is called the length of the signature. The set of all signatures of a given length $L$ will be denoted as $\s_L$,  
and the set of all \emph{nonnegative} signatures (i.e. the signatures that consist of nonnegative integers) of length $L$ will be denoted as $\s_L^+$. We will also occasionally write nonnegative signatures in the form $\la=0^{m_0}1^{m_1}2^{m_2}\cdots$, where $m_j$'s are the multiplicities: $m_j=|\{i:\la_i=j\}|$. 

We agree that the set $\s_0=\s_0^+$ consists of the single empty signature $\varnothing=0^01^02^0\cdots$. Whenever we speak of the set of all (nonnegative) signatures below, we mean $\bigsqcup_{L\ge 0} \s_L$ or $\bigsqcup_{L\ge 0} \s^+_L$, including the possibility that the length may be zero. 

Let us now consider (a part of) the standard square grid, and let us assign to each vertex of the grid a complex variable in such a way that all the vertices in the same row are assigned the same variable. 

For a finite up-right path in the square grid, we define its weight as the product of weights of its interior vertices\footnote{We call a grid vertex \emph{interior} to a path if it lies on the path and does not coincide with its beginning and ending vertices.}, where the weight of each vertex is determined via Definition \ref{df:weights} with $i_1,j_1,i_2,j_2$ being $0$ or $1$ depending on whether the corresponding (South, West, North, or East) edge adjacent to the vertex is a part of the path or not (0 if it is not, and 1 if it is). To apply the formulas of Definition \ref{df:weights} we assume that $q$ and $s$ are universal parameters fixed once and for all, and for $u$ we utilize the variables that we have just assigned to the vertices of the grid (recall that this variable does not change if we move along any row). 

Similarly, for a collection of finitely many up-right paths in the square grid, we define the corresponding weight as the product of weights of interior vertices of all the paths in the collection, where the weight of each vertex is determined via Definition \ref{df:weights} with $i_1,j_1,i_2,j_2$ equal to the number of paths of the collection that contain the corresponding (South, West, North, or East) edge adjacent to the vertex, with the same convention about $q,s,u$ as in the previous paragraph. Note that the weight of a collection of paths is, generally speaking, not equal to the product of weights of its members (but it will be if the paths do not have common edges).  

We view two collections of up-right paths as identical if their sets of interior vertices coincide and the numbers $(i_1,j_1,i_2,j_2)$ for each of such vertices in the two collections are equal. 

If a collection of paths has no interior vertices we assign to it the weight 1. 

\begin{definition}\label{df:F}
Fix $L\ge M\ge 0$, $\la\in\s^+_L$, $\mu\in\s^+_M$, and indeterminates $u_1,\dots,u_{L-M}$.
Assign to each vertex $(i,j)\in \Z\times \{1,2,\dots,L-M\}$ the variable $u_j$. 
 
Define a rational function $F_{\la/\mu}(u_1,\dots, u_{L-M})$ as the sum of weights of all possible collections of $L$ up-right paths that (cf. Figure \ref{fg:paths}, left panel)
\begin{enumerate}
\item start with the (vertical) edges $\{(\mu_m,0)\to (\mu_m,1),\ 1\le m\le M\}$, and with the (horizontal) edges $\{(-1,j)\to (0,j),\ 1\le j\le L-M\}$;
\item end with the (vertical) edges $\{(\la_l,L-M)\to (\la_l,L-M+1),\ 1\le  l\le L\}$.
\end{enumerate}

We shall also use the abbreviated notation $F_{\la/\varnothing}=F_\la$. 
\end{definition}

\begin{definition}\label{df:G}
Fix $L,k\ge 0$, $\la\in\s_L$, $\nu\in\s_L$, and indeterminates $u_1,\dots,u_k$.
Assign to each vertex $(i,j)\in \Z\times \{1,2,\dots,k\}$ the variable $u_j$. 
 
Define the rational function $G_{\la/\nu}(u_1,\dots, u_k)$ as the sum of weights of all possible collections of $L$ up-right paths that (cf. Figure \ref{fg:paths}, right panel)
\begin{enumerate}
\item start with the (vertical) edges $\{(\nu_n,0)\to (\nu_n,1),\ 1\le n\le L\}$;
\item end with the (vertical) edges $\{(\la_l,k)\to (\la_l,k+1),\ 1\le  l\le L\}$.
\end{enumerate}

We shall also use the abbreviated notation $G_{\la/(0,\dots, 0)}=G_\la$. 
\end{definition}

\begin{figure}
\includegraphics[scale=1.3]{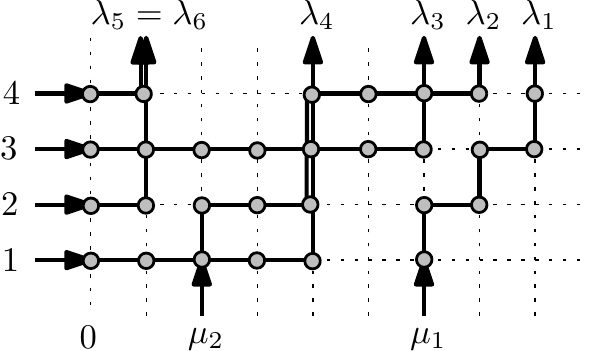}\qquad \includegraphics[scale=1.3]{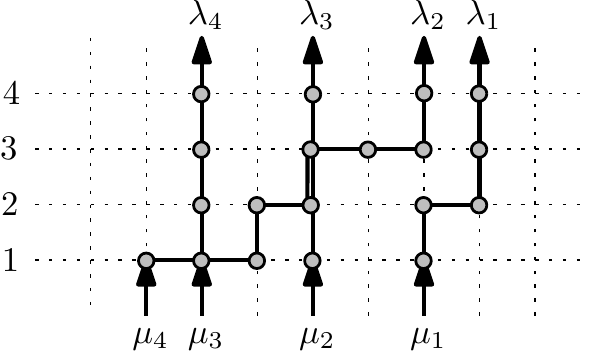}
\caption{Paths for $F_{\la/\mu}$ (left) and $G_{\la/\mu}$ (right).}
\label{fg:paths} 
\end{figure}

Note that in the second definition the signatures are not required to be nonnegative. 

\begin{remark}\label{rm:partition_function}
Because of our normalization $w(0,0;0,0)=1$, we could have equivalently defined the weight of the collection of paths in Definition \ref{df:F} as the product of weights of \emph{all} vertices of the half-strip $\Z_{\ge 0}\times \{1,\dots,L-M\}$. Furthermore, instead of restricting our attention to up-right paths, we could have taken the sum over all possible assignments of nonnegative integers to all the edges adjacent to the vertices of the half-strip that agree with our boundary conditions on the bottom, left, and top boundaries, and such that only finitely many edges carry nonzero numbers (which can be thought of as a boundary condition at the infinite right edge of the strip). Namely to a vertical edge with coordinate $x\in\Z_{\ge 0}$ at the bottom boundary we assign $\mathbf{1}_{x\in \{\mu_m\}}$, to a vertical edge with coordinate $x\in\Z_{\ge 0}$ at the top boundary we assign $\mathbf{1}_{x\in\{\lambda_l\}}$, and to all horizontal edges at the left boundary we assign 1's. 
It is not hard to see that only the assignments that correspond to the collections of up-right paths would give nonzero contributions. 

A similar statement applies for Definition \ref{df:G} as well, with the half-strip replaced by the full strip $\Z\times \{1,\dots,k\}$, and the boundary conditions enforced on top and bottom boundaries (left and right infinities are both taken care of by the finiteness condition). 

\end{remark} 

The definitions immediately imply, by splitting the (half)-strip into two narrower (half)-strips, the following \emph{branching rules}.

\begin{proposition}\label{pr:branching} \textbf{(i)} For any $L\ge K\ge M\ge 0$, $\la\in\s^+_L$, $\mu\in \s^+_M$,
\begin{equation}\label{eq:branching-F}
F_{\la/\mu}(u_1,\dots,u_{L-M})=\sum_{\kappa\in\s^+_K}  F_{\la/\kappa}(u_{K-M+1},\dots,u_{L-M})F_{\kappa/\mu}(u_1,\dots,u_{K-M}). 
\end{equation}
\textbf{(ii)} For any $L,k_1,k_2\ge 0$,  $\la,\nu\in\s_L$, 
\begin{equation}\label{eq:branching-G}
G_{\la/\mu}(u_1,\dots,u_{k_1+k_2})=\sum_{\kappa\in\s_L} G_{\la/\kappa}(u_{k_1+1},\dots,u_{k_1+k_2})G_{\kappa/\nu}(u_1,\dots,u_{k_1}).
\end{equation}
\end{proposition}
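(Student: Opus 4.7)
The plan is to deduce both branching identities directly from the partition-function description of $F_{\la/\mu}$ and $G_{\la/\nu}$, using the key fact that the weight of a path collection on a strip is, by construction, the product of the individual vertex weights over all vertices in that strip. Since every vertex in the grid has a fixed row and the variable $u_j$ is attached uniformly to row $j$, cutting the (half-)strip by a horizontal line yields a clean multiplicative factorization of the weight into the contribution from the two resulting sub-strips.

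For part (i), I would slice the half-strip $\Z_{\ge 0}\times\{1,\dots,L-M\}$ along the horizontal line at height $K-M$, i.e.\ between rows $K-M$ and $K-M+1$. A configuration of up-right paths satisfying the boundary conditions of Definition~\ref{df:F} is then uniquely determined by specifying (a) the integer multi-set of $x$-coordinates of vertical edges that cross this slicing line, which by arrow preservation (Remark~\ref{rm:arrow_preserve}(i)) must have total multiplicity equal to $K$ and hence form a nonnegative signature $\kappa\in\s_K^+$; (b) a path configuration on the lower half-strip $\Z_{\ge 0}\times\{1,\dots,K-M\}$ with bottom boundary $\mu$ and top boundary $\kappa$; and (c) a path configuration on the upper half-strip $\Z_{\ge 0}\times\{K-M+1,\dots,L-M\}$ with bottom boundary $\kappa$ and top boundary $\la$. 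By the definitions, the weight of the lower configuration summed over (b) yields $F_{\kappa/\mu}(u_1,\dots,u_{K-M})$, and for the upper one it yields $F_{\la/\kappa}(u_{K-M+1},\dots,u_{L-M})$; summing over $\kappa$ gives \eqref{eq:branching-F}.

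For part (ii), the argument is essentially identical: cut the full strip $\Z\times\{1,\dots,k_1+k_2\}$ along the line at height $k_1$. Arrow preservation now fixes the total number of vertical edges crossing the cut to be $L$, and their $x$-coordinates form a signature $\kappa\in\s_L$ (not required nonnegative since the full strip extends to $-\infty$ horizontally). The same factorization of the vertex-weight product and the same summation over $\kappa$ produce \eqref{eq:branching-G}. One uses the viewpoint of Remark~\ref{rm:partition_function} to legitimize working with the product of all vertex weights in each (half-)strip, which makes the factorization obvious.

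The only subtlety worth pausing on is that $\kappa$ may have repeated entries: multiple paths may cross the same vertical edge on the slicing line, and the vertex weights account for this correctly because $w_u(i_1,j_1;i_2,j_2)$ is evaluated with $i_1,i_2$ equal to the total number of paths on the corresponding vertical edges. Once one is comfortable that a joint path configuration is the same data as a compatible pair of configurations glued along a shared signature $\kappa$, with weights multiplying, the proof is immediate. I do not anticipate any real obstacle; the main thing is to state the bijection at the level of configurations carefully so that the weight factorization is transparent.
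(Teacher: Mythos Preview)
Your argument is correct and is exactly the paper's approach: the paper states that the branching rules follow immediately from the definitions ``by splitting the (half)-strip into two narrower (half)-strips,'' and you have simply written out the details of that splitting and the resulting weight factorization. There is nothing to add.
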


The following result is less obvious. It is essentially equivalent to the statement about commutation of transfer matrices with different spectral parameters for the higher spin XXZ model in infinite volume and finite-magnon sector\footnote{The words `finite-magnon sector' refer to the situation when the total number of up-spins in the system remains finite. In our situation this corresponds to finitely many vertical arrows in any row of vertical edges.}. 

\begin{theorem}\label{th:symmetry} The functions $F_{\la/\mu}(u_1,\dots,u_{L-M})$ and $G_{\la/\nu}(u_1,\dots,u_k)$ of Definitions \ref{df:F}, \ref{df:G} are symmetric with respect to permutations of their $u$-variables. 
\end{theorem}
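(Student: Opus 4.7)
The proof plan is a Yang–Baxter ``train-track'' argument, applied uniformly to $F$ and $G$.

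Since the symmetric group is generated by adjacent transpositions, it suffices to prove invariance under a single swap $u_i \leftrightarrow u_{i+1}$. Applying the branching rule (Proposition \ref{pr:branching}) twice, to split off the two rows carrying the parameters $u_i, u_{i+1}$ from the rest of the strip, the full partition function factors as a sum over intermediate signatures in which $u_i$ and $u_{i+1}$ appear only inside a single two-variable factor $F_{\kappa/\kappa'}(u_i, u_{i+1})$ or $G_{\kappa/\kappa'}(u_i, u_{i+1})$. So it is enough to prove symmetry in the two-variable case.

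Fix a two-row strip with parameter $u_1$ in the bottom row and $u_2$ in the top row, and with prescribed vertical boundary data $(b_x), (t_x)$ (determined by $\kappa, \kappa'$). By Remark \ref{rm:partition_function} the partition function can be evaluated column-by-column and written as a matrix product in the horizontal-edge space $\C^2 \otimes \C^2$ with basis $|k_1, k_2\rangle$, $k_j \in \{0, 1\}$:
\[
Z(u_1, u_2) \;=\; \langle L |\, \prod_{x} w_{u_1, u_2}^{(b_x, t_x)} \,| R \rangle,
\]
where $|L\rangle$ is $|1, 1\rangle$ for $F$ and $|0, 0\rangle$ for $G$, and $|R\rangle = |0, 0\rangle$ in both cases. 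The product is effectively finite: beyond some $x_0$, one has $b_x = t_x = 0$, the horizontal state remains $|0, 0\rangle$, and $w^{(0, 0)}_{u_1, u_2}(0, 0; 0, 0) = 1$.

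The core step is to insert $X\cdot X^{-1}$ from \eqref{eq:matrix_X} at the far right end of the product. Inspection of $X$ shows it has block-diagonal form, with $1 \times 1$ blocks of value $1$ on the rays $\C|0, 0\rangle$ and $\C|1, 1\rangle$ and a central $2 \times 2$ block on $\{|0, 1\rangle, |1, 0\rangle\}$; in particular $X|R\rangle = |R\rangle$ and $\langle L|X = \langle L|$, so the insertion is trivial. I then push $X^{-1}$ leftward column-by-column via Proposition \ref{pr:YB} in the equivalent form
\[
w_{u_1, u_2}^{(m, n)} X^{-1} \;=\; X^{-1}\, \widetilde{w}_{u_2, u_1}^{(m, n)}.
\]
After $X^{-1}$ passes through all nontrivial columns, it arrives next to $|L\rangle$, where it is again absorbed. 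Since $\widetilde{w} = PwP$ for the swap $P|k_1, k_2\rangle = |k_2, k_1\rangle$, the $P$'s telescope across neighboring columns, and because $P$ fixes both $|0, 0\rangle$ and $|1, 1\rangle$ the remaining boundary $P$'s are absorbed as well; this leaves
\[
Z(u_1, u_2) \;=\; \langle L |\, \prod_x w_{u_2, u_1}^{(b_x, t_x)} \,| R \rangle \;=\; Z(u_2, u_1).
\]

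The main technical hurdle is the bookkeeping: verifying that both boundary states $|0, 0\rangle$ and $|1, 1\rangle$ arising in the definitions of $G$ and $F$ are simultaneously $+1$-eigenvectors of $X^{\pm 1}$ and fixed by the swap $P$. This pair of coincidences is exactly what renders the crossing invisible at both ends of the two-row strip and makes the argument go through uniformly for both families.
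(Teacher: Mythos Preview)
Your proposal is correct and is essentially the same argument as the paper's: both reduce via branching to two rows, write the two-row partition function as a product of the $4\times 4$ column matrices $w_{u_1,u_2}^{(m,n)}$, and then invoke the Yang--Baxter conjugation $\widetilde w_{u_2,u_1}^{(m,n)}=Xw_{u_1,u_2}^{(m,n)}X^{-1}$ together with the fact that the boundary vectors $|0,0\rangle$ and $|1,1\rangle$ are fixed by both $X$ and the swap $P$ (the paper packages this last fact as Lemma~\ref{lm:}, equation~\eqref{eq:lemma1}). Your ``insert $XX^{-1}$ and push'' narration is just the train-track phrasing of the same conjugation identity.
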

\begin{proof} Due to the branching relations above, it suffices to consider the case of two variables. (In other words, it suffices to show that swapping the variables corresponding to two neighboring rows of the grid does not affect the partition function.)
Also, due to translation invariance of the path collections of Definition \ref{df:G}, for $G_{\la/\nu}$ we may assume that $\la$ and $\nu$ are nonnegative without loss of generality. 

Let us recall the $4\times 4$ matrices $w_{u_1,u_2}^{(m,n)}$ of the two-vertex weights \eqref{eq:two-vertex}. If we consider the product of a few such matrices
$$
w_{u_1,u_2}^{(m_0,\dots,m_S;n_0,\dots,n_S)}=w_{u_1,u_2}^{(m_0,n_0)}w_{u_1,u_2}^{(m_1,n_1)}\cdots w_{u_1,u_2}^{(m_S,n_S)},\qquad S\ge 0, 
$$
then its matrix elements $w_{u_1,u_2}^{(m_0,\dots,m_S;n_0,\dots,n_S)}(k_1,k_2;k_1',k_2')$ can be viewed as sums of products of weights of all vertices in the rectangle $\{0,\dots,S\}\times\{1,2\}$; the summation goes over all possible assignments of nonnegative numbers to the grid edges adjacent to the vertices of the rectangle, subject to boundary conditions given by 
$(m_0,\dots,m_S)$ at the row of vertical edges on the bottom, by $(n_0,\dots,n_S)$ at the row of vertical edges at the top, by $(k_1,k_2)$ at the two horizontal edges on the left boundary, and by $(k_1',k_2')$ at the two horizontal edges on the right boundary, cf. Remark \ref{rm:partition_function}.

Given $\lambda=0^{n_0}1^{n_1}2^{n_2}\cdots$, $\mu=0^{m_0}1^{m_1}2^{m_2}\cdots$ and taking $S\ge \la_1$, we have (by Definition \ref{df:F})
$$
F_{\la/\mu}(u_1,u_2)=w_{u_1,u_2}^{(m_0,\dots,m_S;n_0,\dots,n_S)}(1,1;0,0),
$$ 
or with $\lambda=0^{n_0}1^{n_1}2^{n_2}\cdots$, $\nu=0^{m_0}1^{m_1}2^{m_2}\cdots$ and $S\ge \la_1$ we we have (by Definition \ref{df:G})
$$
G_{\la/\mu}(u_1,u_2)=w_{u_1,u_2}^{(m_0,\dots,m_S;n_0,\dots,n_S)}(0,0;0,0).
$$
Further, Proposition \ref{pr:YB} implies that 
\begin{equation}\label{eq:conjugated-products}
\widetilde w_{u_2,u_1}^{(m_0,n_0)}\widetilde w_{u_2,u_1}^{(m_1,n_1)}\cdots \widetilde w_{u_2,u_1}^{(m_S,n_S)}= X\, w_{u_1,u_2}^{(m_0,n_0)}w_{u_1,u_2}^{(m_1,n_1)}\cdots w_{u_1,u_2}^{(m_S,n_S)}\, X^{-1}
\end{equation}
with $X$ as in \eqref{eq:matrix_X}, and the two relations of \eqref{eq:lemma1} yield
$G_{\la/\nu}(u_1,u_2)=G_{\la/\nu}(u_2,u_1)$ and $F_{\la/\mu}(u_1,u_2)=F_{\la/\mu}(u_2,u_1)$, respectively. 
\end{proof}

\begin{remark}\label{rm:general-symmetry} We used the left boundary conditions $(k_1,k_2)=(1,1)$ and $(0,0)$ to define  $F_*(u_1,u_2)$ and $G_*(u_1,u_2)$. For the symmetry $u_1\leftrightarrow u_2$ it is essential that $k_1$ and $k_2$ are equal. One could define similar rational functions with $k_j$'s being different (either two or more of them for a larger number of variables), but then the symmetry relations would be replaced by more complicated ones; those could be extracted from relations on other matrix elements in the setting of Lemma \ref{lm:}. 
\end{remark}

\section{Identities of Cauchy and Pieri type}\label{sc:cauchy}

In this section we prove several identities involving $F$- and $G$-functions defined above. The terminology we use for these identities (as well as for a few results in further sections as well) originate from the theory of symmetric functions, where it is traditionally used for similar results involving classical Schur symmetric functions and their generalizations. Exact references to analogs of our results for the Hall-Littlewood symmetric functions (which form a one-parameter generalization of the Schur functions, and the $s=0$ specialization of our $F$- and $G$-functions) are collected in Section \ref{ss:HL} below. 

Our first goal is to derive the simplest skew-Cauchy type identity using Proposition \ref{pr:YB} and \eqref{eq:lemma2}. We need more notation to state it. 

\begin{definition}\label{df:conjugated} For any $(i_1,j_1;i_2,j_2)\in \Z_{\ge 0}^4$ we define the \emph{conjugated} {vertex weight} (depending on a complex parameter $u$)
by 
$$
w_u^c(i_1,j_1;i_2,j_2)=\frac{(q;q)_{i_1}(s^2;q)_{i_2}}{(q;q)_{i_2}(s^2;q)_{i_1}}\,w_u(i_1,j_1;i_2,j_2),
$$
with $w_u$ as in Definition \ref{df:weights} and with the standard $q$-Pochhammer notation
$$
(a;q)_n=\begin{cases}
(1-a)(1-qa)\cdots(1-aq^{n-1}),&n\ge 1,\\
1,&n=0.
\end{cases}
$$
Utilizing such conjugated weights instead of the usual ones in Definitions \ref{df:F} and \ref{df:G} leads to the \emph{conjugated} $F$ and $G$ functions
$$
F^c_{\lambda/\mu}:=\frac{c(\lambda)}{c(\mu)}\, F_{\la/\mu},\qquad G^c_{\lambda/\mu}:=\frac{c(\lambda)}{c(\mu)}\,G_{\la/\mu},
$$
where for a signature $\nu=0^{n_0}1^{n_1}2^{n_2}\cdots$ we define
\begin{equation}\label{eq:def-c}
c(\nu)=\prod_{k\ge 0} \frac{(s^2;q)_{n_k}}{(q;q)_{n_k}}\,. 
\end{equation}
\end{definition}

\begin{theorem}[skew-Cauchy identity with single variables]\label{th:skew-cauchy-single} Let $u,v\in\C$ be such that
\begin{equation}\label{eq:for-convergence-single}
\left|\frac{u-s}{1-su}\cdot\frac{v-s}{1-sv}\right|<1. 
\end{equation}
Then for any nonnegative signatures $\la$ and $\mu$ we have
\begin{equation}\label{eq:skew-Cauchy-single}
\sum_\nu F_{\nu/\la}(u)G^c_{\nu/\mu}(v)=\frac{1-quv}{1-uv}\sum_\kappa G^c_{\la/\kappa}(v) F_{\mu/\kappa}(u),
\end{equation}
where both summations are taken over the set of all nonnegative signatures. 
\end{theorem}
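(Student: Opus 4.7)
The plan is to prove the identity using the Yang--Baxter equation of Proposition~\ref{pr:YB}, in the form of the matrix relation~\eqref{eq:lemma2} of Lemma~\ref{lm:}. The idea is to realize both sides of~\eqref{eq:skew-Cauchy-single} as matrix elements of a single two-row strip partition function, and then use~\eqref{eq:lemma2} to equate them with the prefactor $\tfrac{1-quv}{1-uv}$ emerging from the $X$-matrix coefficients combined with an auxiliary geometric series.

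First I would set up a two-row strip partition function $W^{(\mu;\la)}_{u_1,u_2}$, viewed as a $4\times 4$ matrix in the left and right horizontal boundary data $(k_1,k_2;k_1',k_2')$, as in the proof of Theorem~\ref{th:symmetry}. By~\eqref{eq:conjugated-products} this matrix satisfies $\widetilde W^{(\mu;\la)}_{u_2,u_1}=X\,W^{(\mu;\la)}_{u_1,u_2}\,X^{-1}$, and specializing to the $(4,2)$-entry, equation~\eqref{eq:lemma2} (with $u_1=v$, $u_2=u$) yields the three-term linear relation
\[
\widetilde W_{u,v}^{(\mu;\la)}(1,1;0,1)=\tfrac{u-v}{u-qv}\,W_{v,u}^{(\mu;\la)}(1,1;0,1)+\tfrac{(1-q)u}{u-qv}\,W_{v,u}^{(\mu;\la)}(1,1;1,0).
\]
Both partition functions on the right have left boundary $(k_1,k_2)=(1,1)$ (two horizontal arrows entering from the left, one per row) and right boundary carrying one outgoing arrow, either on the top row or on the bottom row.

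Second, I would decompose each of these three partition functions combinatorially by tracking the through-horizontal arrow that is forced by the boundary condition: one of the two incoming left arrows must, at some column, decouple from the vertical paths and travel as a pure horizontal edge all the way to the right exit. Summing over the column where this decoupling occurs produces a geometric series in the product $\tfrac{(u-s)(v-s)}{(1-su)(1-sv)}$, which converges by~\eqref{eq:for-convergence-single} and evaluates, using the identity $(1-su)(1-sv)-(u-s)(v-s)=(1-s^2)(1-uv)$, to a closed-form containing the factor $\tfrac{1}{1-uv}$. The residual factor after peeling off the through arrow is a product of one-row partition functions of $F$- and $G$-type; the telescoping of the ratio $w^c/w$ along the vertical edges of the $G$-row contributes exactly the conjugation factors $c(\nu)/c(\mu)$ turning $G_{\nu/\mu}$ into $G^c_{\nu/\mu}$. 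Assembling everything, the three-term YB relation above collapses into the desired two-term identity~\eqref{eq:skew-Cauchy-single}, with the prefactor $\tfrac{1-quv}{1-uv}$ emerging from the combination of the $X$-matrix coefficients $\tfrac{u-v}{u-qv}$, $\tfrac{(1-q)u}{u-qv}$ with the geometric-series sum.

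The hard part will be precisely this combinatorial identification: keeping track of which of the two incoming left arrows becomes the through arrow in each term, of the subscript ordering of the resulting $F$- and $G^c$-functions (in particular distinguishing $F_{\nu/\la}$ from $F_{\la/\nu}$, which are different objects), and of the correct vertical-edge assignment of the conjugation factors $c(\cdot)$. Once the decomposition is correctly set up, the algebra reduces to the elementary matrix computation of Lemma~\ref{lm:} together with one geometric-series summation controlled by~\eqref{eq:for-convergence-single}.
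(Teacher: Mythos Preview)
Your starting point---the two-row strip and the $(4,2)$ entry of the Yang--Baxter relation~\eqref{eq:lemma2}---is exactly right, but the execution diverges from the paper's argument at a crucial point, and the divergence is not a matter of taste: it is where the proof actually happens.

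The paper does \emph{not} take spectral parameters $(u_1,u_2)=(v,u)$. It takes $(u_1,u_2)=(u,v^{-1})$, with one parameter inverted. This single move does two things at once. First, the $X$-matrix coefficient in~\eqref{eq:lemma2} becomes
\[
\frac{u_2-u_1}{u_2-qu_1}=\frac{v^{-1}-u}{v^{-1}-qu}=\frac{1-uv}{1-quv},
\]
so the prefactor $\tfrac{1-quv}{1-uv}$ falls out directly, with no geometric series needed. Second, and more importantly, the row carrying $v^{-1}$ can be converted into a genuine $G^c(v)$-row via the elementary vertex identity
\[
\frac{v-s}{1-sv}\,w_{v^{-1}}(i_1,j_1;i_2,j_2)=w_v^c(i_2,\,1-j_1;\,i_1,\,1-j_2),
\]
which simultaneously complements the horizontal occupation numbers $j\mapsto 1-j$ and swaps $i_1\leftrightarrow i_2$. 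A row that had an arrow entering on the left and exiting on the right (hence neither $F$- nor $G$-type) becomes, after this substitution, a row with empty horizontal boundaries, i.e.\ exactly a $G^c$-row. This is how $G^c_{\nu/\mu}(v)$ appears; there is no ``telescoping of $w^c/w$ along vertical edges'' in the argument.

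The paper then works on a \emph{finite} strip of width $S$, divides through by $(w_{v^{-1}}(0,1;0,1))^S$, and sends $S\to\infty$. The hypothesis~\eqref{eq:for-convergence-single} enters precisely here: it forces the $(4,3)$ term in~\eqref{eq:lemma2} to vanish in the limit, collapsing the three-term relation to two terms. There is no geometric series over a ``decoupling column''.

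With your choice $(u_1,u_2)=(v,u)$, none of this works: the $X$-coefficients are $\tfrac{u-v}{u-qv}$ and $\tfrac{(1-q)u}{u-qv}$, which bear no visible relation to $\tfrac{1-quv}{1-uv}$, and each of your three strip partition functions has a row with horizontal arrows on both ends that is neither $F$ nor $G$. Your proposed fix---tracking a through-arrow that ``decouples at some column'' and summing a geometric series---is not well-defined: the arrow that exits on the right can interlace arbitrarily with the vertical paths before reaching the empty region, so there is no canonical decoupling column, and it is unclear what residual one-row partition functions would remain after ``peeling it off''. The identity $(1-su)(1-sv)-(u-s)(v-s)=(1-s^2)(1-uv)$ you quote is correct but does not arise here.

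In short: the missing idea is the spectral inversion $v\mapsto v^{-1}$ together with the complementation identity above and the finite-$S$ limiting procedure. Once you use those, the proof is a short computation; without them, the combinatorial decomposition you sketch does not go through.
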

\noindent\textbf{Comments.} \textbf{(i)} The summation over $\kappa$ always has finitely many nonzero terms, while the summation over $\nu$ may have infinitely many ones. Condition \eqref{eq:for-convergence-single} is needed to insure the convergence of the series. 

\noindent\textbf{(ii)} For the statement to be nontrivial one must take the length of $\mu$ to be one more than the length of $\la$. Then the only nonzero contributions to the left-hand side of \eqref{eq:skew-Cauchy-single} will come from $\nu$ that are of the same length of $\mu$, and nonzero contributions to the right-hand side will come from $\kappa$ of the same length as $\la$.  

\noindent \textbf{(iii)} The conjugation in \eqref{eq:skew-Cauchy-single} can be placed on the $F$-factors instead of the $G$-factors; the statement obviously does not change. 

\noindent \textbf{(iv)} The formulation and the proof of Theorem \ref{th:skew-cauchy-single} assumes that the `set of all nonnegative signatures' includes the empty signature $\varnothing\in \s_0^+$.

\begin{proof} The argument is similar to the proof of Theorem \ref{th:symmetry}. Namely, we begin with \eqref{eq:conjugated-products} with $\lambda=0^{n_0}1^{n_1}2^{n_2}\cdots$, $\mu=0^{m_0}1^{m_1}2^{m_2}\cdots$, and we also take $u_1=u$, $u_2=v^{-1}$. Further, we look at the matrix element $(4,2)$ of both sides (that corresponds to $(k_1,k_2;k_1',k_2')=(1,1;0,1)$). The type of paths that contribute to the left-hand side can be seen on Figure \ref{fg:skew-cauchy-proof}, left panel. Here for the bottom row of vertices we use parameter $v^{-1}$, and for the top row of vertices we use parameter $u$.

\begin{figure}
\includegraphics[scale=1.2]{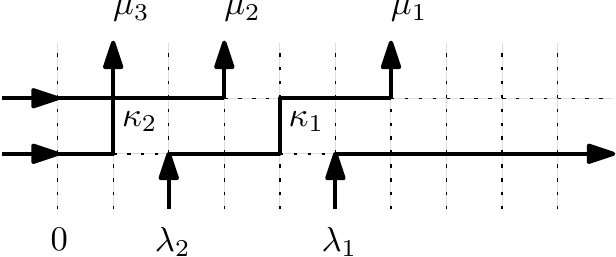}\qquad
\includegraphics[scale=1.2]{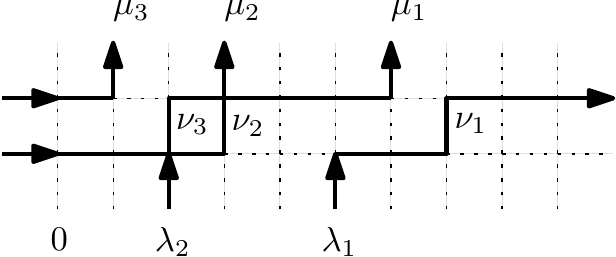}
\caption{Two types of paths in the proof of Theorem \ref{th:skew-cauchy-single}.}
\label{fg:skew-cauchy-proof}
\end{figure}

According to \eqref{eq:lemma2}, on the right-hand side we obtain a linear combination of $(4,2)$ and $(4,3)$ matrix elements of $w_{u,v^{-1}}^{(m_0,n_0)}w_{u,v^{-1}}^{(m_1,n_1)}\cdots w_{u,v^{-1}}^{(m_S,n_S)}$. The $(4,3)$-matrix element is again the sum of weights of paths of the same type as before, but with $u$ and $v^{-1}$ interchanged. On the other hand, the $(4,2)$-matrix element is the sum of weights corresponding to paths of the type pictured in Figure \ref{fg:skew-cauchy-proof}, right panel. The bottom row of vertices uses $u$, and the top row of vertices uses $v^{-1}$.

As $S$ --- the horizontal size of our rectangles --- tends to infinity, all three terms collect a growing number of factors, which are the weights of vertices $(i_1,j_1;i_2,j_2)=(0,1;0,1)$ lying on the long horizontal parts of the paths that exit through the right boundary. 
Let us divide all three terms by 
$$
(w_{v^{-1}}(0,1;0,1))^S=\left(\frac{1-sv}{v-s}\right)^S. 
$$
This will remove most factors from left-hand side and from the (4,2)-matrix element on the right hand-side, with both tending to a finite limit as $S\to\infty$ (we will identify these limits shortly). On the other hand, the (4,3)-element on the right-hand side will equal to a finite expression times 
$$
\left(\frac{w_u(0,1;0,1)}{w_{v^{-1}}(0,1;0,1)}\right)^S=\left(\frac{(u-s)(v-s)}{(1-us)(1-vs)}\right)^S,
$$
which will tend to zero because of our hypothesis \eqref{eq:for-convergence-single}. Hence, from \eqref{eq:lemma2} in the limit $S\to\infty$ we read
\begin{equation}\label{eq:(4,2)-elts}
\text{(4,2)-element of the LHS} = \frac{1-uv}{1-quv}\cdot  \left(\text{(4,2)-element of  } w_{u,v^{-1}}^{(m_0,n_0)}w_{u,v^{-1}}^{(m_1,n_1)}\cdots w_{u,v^{-1}}^{(m_S,n_S)}\right),
\end{equation}
with the two (4,2)-elements represented as sum of weights of paths on the left and right panels of Figure \ref{fg:skew-cauchy-proof}, respectively, where for the top row of vertices on the left figure and for the bottom row of vertices on the right figure we use the weights $w_u$ of Definition \ref{df:weights}, while for the bottom row of vertices on the left figure and for the top row of vertices on the right figure we use the weights
$$
\text{weight}(i_1,j_1;i_2,j_2)=\frac{v-s}{1-sv}\cdot w_{v^{-1}}(i_1,j_1;i_2,j_2). 
$$
Observe that we re-packaged the renormalization as the extra pre-factor $(v-s)/(1-vs)$ on the right, which turns the weight of each of the infinitely many $(0,1;0,1)$ vertices into 1.

It remains to identify \eqref{eq:(4,2)-elts} with \eqref{eq:skew-Cauchy-single}. This readily follows from the pictorial interpretation and the identity
$$
\frac{v-s}{1-sv}\cdot w_{v^{-1}}(i_1,j_1;i_2,j_2)=w_v^c(i_2,1-j_1;i_1,1-j_2). 
$$
Note that pictorially, the change $(j_1,j_2)\mapsto (1-j_1,1-j_2)$ in the above relation correspond to swapping filled and unfilled horizontal edges on the top row of the left panel and on the bottom row of the right panel of Figure \ref{fg:skew-cauchy-proof}. 
\end{proof}

\begin{remark} Similarly to Remark \ref{rm:general-symmetry}, we could have used other matrix elements in the above argument. This leads to different identities. 
More exactly, in the setting of Lemma \ref{lm:} we have
$$
(XAX^{-1})_{22}=\frac{q(u_1-u_2)^2}{(u_1-qu_2)(qu_1-u_2)}\,A_{22}+\frac{(1-q)u_1(u_1-u_2)}{(u_1-qu_2)(qu_1-u_2)}\,A_{32}+\text{ lin. comb. of }(A_{23},A_{33}),
$$
which translates into (under the same assumption \eqref{eq:for-convergence-single})
\begin{multline*}
(1-quv)(q-uv)\sum_\kappa G_{\mu/\kappa}(u)G^c_{\la/\kappa}(v)\\={q(1-uv)^2}\sum_{\nu} G_{\nu/\la}(u)G^c_{\nu/\mu}(v)-{(1-q)uv(1-uv)}\sum_\nu F_{\nu/\la}(u)F_{\nu/\mu}^c(v).
\end{multline*}
Similarly,
$$
(XAX^{-1})_{32}=\frac{(u_1-u_2)^2}{(u_1-qu_2)(qu_1-u_2)}\,A_{32}+\frac{(1-q)u_2(u_1-u_2)}{(u_1-qu_2)(qu_1-u_2)}\,A_{22}+\text{ lin. comb. of }(A_{23},A_{33})
$$
translates into
\begin{multline*}
(1-quv)(q-uv)\sum_\kappa F_{\mu/\kappa}(u)F^c_{\la/\kappa}(v)\\={(1-uv)^2}\sum_{\nu} F_{\nu/\la}(u)F^c_{\nu/\mu}(v)-{(1-q)(1-uv)}\sum_\nu G_{\nu/\la}(u)G_{\nu/\mu}^c(v).
\end{multline*}
\end{remark}

We now draw a few corollaries of Theorem \ref{th:skew-cauchy-single}.

\begin{corollary}[skew-Cauchy identity]\label{cr:skew-cauchy}
Let $u_1,\dots,u_M;v_1,\dots,v_N\in\C$ be such that
\begin{equation}\label{eq:for-cauchy-convergence}
\left|\frac{u_i-s}{1-su_i}\cdot\frac{v_j-s}{1-sv_j}\right|<1,\qquad 1\le i\le M, \quad 1\le j\le N. 
\end{equation}
Then for any nonnegative signatures $\la$ and $\mu$ we have
\begin{equation}\label{eq:skew-cauchy}
\sum_\nu F_{\nu/\la}(u_1,\dots,u_M)G^c_{\nu/\mu}(v_1,\dots,v_N)=\prod_{\substack{1\le i\le M\\1\le j\le N}}\frac{1-qu_iv_j}{1-u_iv_j}\sum_\kappa G^c_{\la/\kappa}(v_1,\dots,v_N) F_{\mu/\kappa}(u_1,\dots,u_M),
\end{equation}
where both summations are over the set of all nonnegative signatures. 
\end{corollary}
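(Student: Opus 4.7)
The plan is to reduce \eqref{eq:skew-cauchy} to the single-variable case \eqref{eq:skew-Cauchy-single} by a two-stage induction built on the branching rules of Proposition \ref{pr:branching}. A preliminary observation is that $G^c_{\la/\mu} = (c(\la)/c(\mu))\,G_{\la/\mu}$ satisfies the same branching identity as $G_{\la/\mu}$, since the factors $c(\kappa)$ at the intermediate signatures telescope; the same remark applies to $F^c$. With this in hand, I would first prove the case $M=1$ by induction on $N$, and then bootstrap to arbitrary $M$ by induction on $M$.

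For the $M=1$ stage, the base case $N=1$ is exactly Theorem \ref{th:skew-cauchy-single}. For the inductive step, split off $v_N$ via branching of $G^c$ and write
\[
\sum_\nu F_{\nu/\la}(u)\, G^c_{\nu/\mu}(v_1,\dots,v_N) = \sum_\sigma G^c_{\sigma/\mu}(v_1,\dots,v_{N-1}) \sum_\nu F_{\nu/\la}(u)\, G^c_{\nu/\sigma}(v_N).
\]
Apply Theorem \ref{th:skew-cauchy-single} to the inner sum, swap orders, apply the induction hypothesis (for $N-1$) to the resulting $\sigma$-sum, and then use branching of $G^c$ once more to recombine two $G^c$-factors into $G^c_{\la/\kappa}(v_1,\dots,v_N)$. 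The prefactor $\prod_{j=1}^N (1-quv_j)/(1-uv_j)$ assembles from the single factor produced by \eqref{eq:skew-Cauchy-single} and the $(N-1)$-fold product supplied by the induction hypothesis.

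For general $M$ the argument is parallel, with the roles of $F$ and $G^c$ interchanged: branch $F_{\nu/\la}(u_1,\dots,u_M) = \sum_\rho F_{\nu/\rho}(u_M) F_{\rho/\la}(u_1,\dots,u_{M-1})$ on the left-hand side of \eqref{eq:skew-cauchy}, apply the just-proved $M=1$ case (with $N$ variables $v_j$ and single variable $u_M$) to the inner sum over $\nu$, then apply the induction hypothesis for $M-1$ on the surviving sum over $\rho$, and close with branching of $F$ in reverse to obtain $F_{\mu/\kappa}(u_1,\dots,u_M)$. The desired product $\prod_{i,j}(1-qu_iv_j)/(1-u_iv_j)$ emerges as the product of the two prefactors from the two applications. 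The main technical obstacle in both stages is justifying the interchanges of infinite summations over signatures; as in the proof of Theorem \ref{th:skew-cauchy-single}, each such sum carries tail factors of the form $\bigl(\tfrac{u_i-s}{1-su_i}\cdot\tfrac{v_j-s}{1-sv_j}\bigr)^{\#}$ coming from the long horizontal segments of the paths, so assumption \eqref{eq:for-cauchy-convergence} yields geometric decay and hence absolute convergence of every sum encountered, which in turn legitimizes every swap.
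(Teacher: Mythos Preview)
Your argument is correct and is exactly the approach the paper takes: the paper's proof reads in full ``One first uses branching rules of Proposition \ref{pr:branching} (they hold for conjugated $F$ and $G$ functions too), and then applies Theorem \ref{th:skew-cauchy-single} a total of $MN$ times.'' Your double induction (first on $N$ with $M=1$, then on $M$) is precisely one way to organize those $MN$ applications, and your observation that $G^c$ inherits the branching rule via telescoping of $c(\cdot)$ matches the paper's parenthetical remark.
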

\begin{proof} One first uses branching rules of Proposition \ref{pr:branching} (they hold for conjugated $F$ and $G$ functions too), and then applies Theorem \ref{th:skew-cauchy-single} a total of $MN$ times. 
\end{proof}

\begin{corollary}[Pieri type rules]\label{cr:pieri} \textbf{(i)} For any $M\ge 0$,  $\mu\in\s_M^+$, $u_1,\dots,u_M,v\in \C$ such that 
$$
\left|\frac{u_i-s}{1-su_i}\cdot\frac{v-s}{1-sv}\right|<1,\qquad 1\le i\le M, 
$$
we have
\begin{equation}\label{eq:pieri-F}
\prod_{i=1}^M\frac{1-qu_iv}{1-u_iv}\, F_\mu(u_1,\dots,u_M)=\sum_{\nu\in \s_{M}^+}G^c_{\nu/\mu}(v)F_\nu(u_1,\dots,u_M).
\end{equation}
\textbf{(ii)} For any $l,N\ge 0$, $\la\in \s_l^+$, $u,v_1,\dots,v_N\in \C$ such that
$$
\left|\frac{u-s}{1-su}\cdot\frac{v_j-s}{1-sv_j}\right|<1,\qquad 1\le j\le N, 
$$
\begin{equation}\label{eq:pieri-G}
\prod_{j=1}^N\frac{1-quv_j}{1-uv_j}\, G_\la^c(v_1,\dots,v_N)=\frac{1-su}{1-q^{l+1}}\sum_{\nu\in \s_{l+1}^+}F_{\nu/\la}(u)G_\nu^c(v_1,\dots,v_N).
\end{equation}
\end{corollary}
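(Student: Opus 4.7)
Both identities are straightforward specializations of the skew-Cauchy identity proved in Corollary \ref{cr:skew-cauchy}; the plan is to exhibit them this way and carry out the corresponding degenerate evaluations.

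For part (i), I would apply Corollary \ref{cr:skew-cauchy} with $\la=\varnothing$ and $N=1$, $v_1=v$. On the left-hand side, $F_{\nu/\varnothing}=F_\nu$ restricts the summation index to $\nu\in\s_M^+$ by the length convention of Definition \ref{df:F}. On the right-hand side, $G^c_{\varnothing/\kappa}(v)$ vanishes unless $\kappa$ has length $0$, because Definition \ref{df:G} forces both of its signatures to share the same length. Consequently only $\kappa=\varnothing$ contributes, with $G^c_{\varnothing/\varnothing}(v)=1$ (empty path collection) and $F_{\mu/\varnothing}=F_\mu$, which immediately yields \eqref{eq:pieri-F}.

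For part (ii), I would apply Corollary \ref{cr:skew-cauchy} with $M=1$, $u_1=u$, and $\mu=(0,\dots,0)\in\s_{l+1}^+$ (the zero signature of length $l+1$). The left-hand side becomes $\sum_\nu F_{\nu/\la}(u)G^c_{\nu/\mu}(v_1,\dots,v_N)=\sum_{\nu\in\s_{l+1}^+} F_{\nu/\la}(u)G^c_\nu(v_1,\dots,v_N)$ by the abbreviation $G_{\cdot/(0,\dots,0)}=G_\cdot$. For the $\kappa$-sum on the right, the key observation is that $F_{\mu/\kappa}(u)=0$ unless $\kappa=(0,\dots,0)\in\s_l^+$: the path ensemble occupies a single row, and since all $l+1$ top end-points lie at horizontal position $0$ while the paths are up-right, no path may start from the bottom at a strictly positive position. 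For this unique surviving $\kappa$, the single interior vertex at $(0,1)$ has arrow counts $(i_1,j_1;i_2,j_2)=(l,1;l+1,0)$, giving $F_{\mu/\kappa}(u)=w_u(l,1;l+1,0)=(1-q^{l+1})/(1-su)$ by \eqref{eq:weight4}. Rearranging the resulting identity produces \eqref{eq:pieri-G}.

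The only mildly subtle step is the vanishing argument for $F_{\mu/\kappa}(u)$ in part (ii), together with the evaluation of the single-vertex weight; both are transparent once one draws the picture of a single-row ensemble with all exits forced to be at position $0$. The convergence hypotheses of each Pieri rule match exactly those of Corollary \ref{cr:skew-cauchy} in the respective specializations, so no further analytic work is needed.
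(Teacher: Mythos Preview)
Your proposal is correct and follows essentially the same approach as the paper: for (i) set $\la=\varnothing$ in the skew-Cauchy identity, and for (ii) set $\mu=0^{l+1}$, observe that $F_{0^{l+1}/\kappa}(u)$ survives only for $\kappa=0^l$, and evaluate that single weight via \eqref{eq:weight4}. Your write-up supplies more explanatory detail (the length constraint forcing $\kappa=\varnothing$ in (i), the up-right constraint forcing $\kappa=0^l$ in (ii)), but the argument is the same.
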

\begin{proof} For (i) we set $\la=\varnothing$ in \eqref{eq:skew-cauchy}. For (ii) we set $\mu=0^{l+1}$ in \eqref{eq:skew-cauchy} and note that $F_{0^{l+1}/\kappa}(u)$ may only be nonzero if $\kappa=0^l$, in which case $$F_{0^{l+1}/0^l}(u)=w_u(l,1;l+1,0)=\frac{1-q^{l+1}}{1-su}$$ according to Definition \ref{df:weights}.
\end{proof}

\begin{remark}\label{rm:bethe} Corollary \ref{cr:pieri}(i) can be viewed as the statement that the vector $$
\{c(\mu)F_\mu(u_1,\dots,u_M)\mid \mu\in\s_M^+\}
$$
is an eigenvector of the `transfer-matrix' 
$$
\{G_{\nu/\mu}(v) \mid \mu,\nu\in \s_M^+\}.
$$
This matrix can indeed be seen as the infinite volume, finite-magnon sector limit of the transfer-matrix of the higher spin XXZ model with periodic boundary conditions (modulo some modifications, cf. Proposition \ref{pr:w-to-R}). As eigenvectors of such transfer-matrices are computable by (coordinate or algebraic) Bethe ansatz, one might expect that there should be a symmetrization formula for $F_\mu$. We shall derive such a formula (and another one for $G_\nu$) in the next section. 
\end{remark}

\begin{corollary}[Cauchy identity]\label{cr:cauchy} For any $M,N\ge 0$, $u_1,\dots,u_M;v_1,\dots,v_N\in \C$ such that \eqref{eq:for-cauchy-convergence} holds, we have
\begin{equation}\label{eq:cauchy}
\frac{\prod_{i=1}^M(1-su_i)}{(q;q)_M}\sum_{\nu\in \s_M^+}F_\nu(u_1,\dots,u_M)G_\nu^c(v_1,\dots,v_N)=\prod_{\substack{1\le i\le M\\1\le j\le N}} \frac{1-qu_iv_j}{1-u_iv_j}\,.
\end{equation}
\end{corollary}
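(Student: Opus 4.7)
The plan is to specialize the skew-Cauchy identity \eqref{eq:skew-cauchy} of Corollary \ref{cr:skew-cauchy} to the choices $\la=\varnothing$ and $\mu=0^M$. With these choices, the only signatures $\nu$ contributing to the left-hand side of \eqref{eq:skew-cauchy} are those of length $M$: the path-count constraint from Definitions \ref{df:F} and \ref{df:G} forces $\ell(\nu)=\ell(\la)+M=M$ on the $F$-factor side and $\ell(\nu)=\ell(\mu)=M$ on the $G$-factor side. Invoking the abbreviations $F_{\nu/\varnothing}=F_\nu$ and $G^c_{\nu/0^M}=G^c_\nu$ (the latter following from $G_{\nu/(0,\dots,0)}=G_\nu$ together with the telescoping definition of the $c$-conjugation), the left-hand side of \eqref{eq:skew-cauchy} becomes exactly the sum $\sum_{\nu\in\s_M^+}F_\nu(u_1,\dots,u_M)G^c_\nu(v_1,\dots,v_N)$ appearing in \eqref{eq:cauchy}.

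Next I would simplify the right-hand side of \eqref{eq:skew-cauchy}. Because $\la=\varnothing$, the factor $G^c_{\varnothing/\kappa}$ vanishes unless $\ell(\kappa)=0$, so only $\kappa=\varnothing$ contributes, and $G^c_{\varnothing/\varnothing}=1$ since the empty path ensemble has weight $1$ and $c(\varnothing)=1$. The right-hand side therefore collapses to
\begin{equation*}
\prod_{\substack{1\le i\le M\\1\le j\le N}}\frac{1-qu_iv_j}{1-u_iv_j}\cdot F_{0^M/\varnothing}(u_1,\dots,u_M),
\end{equation*}
and the problem reduces to evaluating the single partition function $F_{0^M/\varnothing}(u_1,\dots,u_M)$.

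For this evaluation I would observe that exactly one up-right path ensemble realizes the prescribed boundary: each of the $M$ horizontal arrows entering at $(-1,j)\to(0,j)$ must, because the paths are up-right and every exit edge lives at column $0$, immediately turn vertical at $(0,j)$ and merge into the growing stack in column $0$. The only interior vertex in row $j$ is then $(0,j)$, of type $(j-1,1;j,0)$, carrying weight $(1-q^j)/(1-su_j)$ by \eqref{eq:weight4}. Multiplying over $j=1,\dots,M$ gives $F_{0^M/\varnothing}(u_1,\dots,u_M)=(q;q)_M/\prod_{i=1}^M(1-su_i)$, and substituting this back and rearranging yields the desired identity \eqref{eq:cauchy}.

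No step presents a real obstacle: the Cauchy identity is essentially a direct $\la=\varnothing$, $\mu=0^M$ specialization of the skew version. The only point that must be handled carefully is the uniqueness of the path configuration behind $F_{0^M/\varnothing}$, which follows from the up-right constraint combined with the fact that every terminal vertical edge sits at column $0$.
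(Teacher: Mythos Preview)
Your proof is correct and follows essentially the same approach as the paper's own proof: specialize the skew-Cauchy identity \eqref{eq:skew-cauchy} at $\la=\varnothing$, $\mu=0^M$, and evaluate $F_{0^M}(u_1,\dots,u_M)=(q;q)_M\prod_{i=1}^M(1-su_i)^{-1}$ using \eqref{eq:weight4}. Your version simply spells out more of the bookkeeping (why only $\kappa=\varnothing$ survives, why the path configuration for $F_{0^M}$ is unique), which the paper leaves implicit.
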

\begin{proof} Substitute $\lambda=\varnothing$, $\mu=0^M$ into \eqref{eq:skew-cauchy}, and using \eqref{eq:weight4} evaluate
$$
F_{0^M}(u_1,\dots,u_M)=(q;q)_M{\prod_{i=1}^M(1-su_i)^{-1}}\,.\qquad\qquad \qedhere
$$
\end{proof}

\section{Symmetrization formulas for $F_\la$ and $G_\la$}\label{sc:symmetrization} The goal of this section is to prove the following statement, cf. Remark \ref{rm:bethe}. In what follows we denote the symmetric group on $n$ symbols by $S_n$, and for $\sigma\in S_n$ and a function $f$ in $n$ variables we also use the notation $\sigma(f)(x_1,\dots,x_n)=f(x_{\sigma(1)},\dots,x_{\sigma(n)}).$ 
\begin{theorem}\label{th:symmetrization} \textbf{(i)} For any $M\ge 0$, $\mu\in \s_M^+$, and $u_1,\dots,u_M\in \C$, we have
\begin{equation}\label{eq:symmetrization-F}
F_\mu(u_1,\dots,u_M)=\frac{(1-q)^M}{\prod_{i=1}^M (1-su_i)}\,\sum_{\sigma\in S_M}\sigma\left(\prod_{1\le i<j\le M}\frac{u_i-qu_j}{u_i-u_j}\cdot\prod_{i=1}^M \left(\frac{u_i-s}{1-su_i}\right)^{\mu_i}\right).
\end{equation}

\textbf{(ii)} Fix $n\ge 0$, $\nu\in\s_n^+$, and assume $k\ge 0$ last coordinates of $\nu$ are zero: $\nu_{n-k+1}=\dots=\nu_n=0$. Then for any $N\ge n-k$ we have
\begin{multline}\label{eq:symmetrization-G}
G_\nu(v_1,\dots,v_N)=\frac{(1-q)^N(s^2;q)_n}{(q;q)_{N-n+k}(s^2;q)_k}\\ \times \sum_{\sigma\in S_N}\sigma\left(\prod_{1\le i<j\le N} \frac{v_i-qv_j}{v_i-v_j}\cdot \prod_{i=1}^{n-k} \frac{v_i}{(1-sv_i)(v_i-s)}\left(\frac{v_i-s}{1-sv_i}\right)^{\nu_i}\cdot \prod_{j=n-k+1}^N\frac{1-q^ksv_j}{1-sv_j}\right).
\end{multline}
\end{theorem}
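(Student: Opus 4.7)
The plan is to prove both parts by induction on the number of variables, reducing each inductive step to the branching rules (Proposition~\ref{pr:branching}) together with an explicit evaluation of a single-row skew partition function. For Part (i), the cases $M=0$ (both sides equal $1$) and $M=1$ follow directly from Definition~\ref{df:F}: for $\mu=(\mu_1)$ the unique path traverses the vertices $(0,1),\dots,(\mu_1-1,1)$ as $(0,1;0,1)$-type with weight $(u_1-s)/(1-su_1)$ each and turns up at $(\mu_1,1)$ as a $(0,1;1,0)$-type with weight $(1-q)/(1-su_1)$, matching the right-hand side. For the inductive step I would use the branching rule
\[
F_\mu(u_1,\dots,u_M)=\sum_{\kappa\in\s^+_{M-1}}F_{\mu/\kappa}(u_M)\,F_\kappa(u_1,\dots,u_{M-1}),
\]
noting that in one row the left-entering path turns up at a single column, so $F_{\mu/\kappa}(u_M)$ is nonzero exactly when $\mu_1\ge\kappa_1\ge\mu_2\ge\dots\ge\kappa_{M-1}\ge\mu_M$, and in that case it is an explicit product of the four vertex weights from Definition~\ref{df:weights} along the row.

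The main obstacle is matching the resulting sum over interlacing $\kappa$ (combined with the inductive hypothesis for $F_\kappa(u_1,\dots,u_{M-1})$) with the sum over $S_M$ in~\eqref{eq:symmetrization-F}. A natural route is to first simplify the right-hand side of~\eqref{eq:symmetrization-F} by splitting $\sigma\in S_M$ according to $k:=\sigma(M)$: this pulls out the factors $((u_k-s)/(1-su_k))^{\mu_M}$ and $\prod_{j\ne k}(u_j-qu_k)/(u_j-u_k)$ and reduces the inner sum, via the inductive hypothesis, to $F_{(\mu_1,\dots,\mu_{M-1})}(\{u_j\}_{j\ne k})$. The identity~\eqref{eq:symmetrization-F} then becomes the one-row recursion
\[
F_\mu(u_1,\dots,u_M)=\sum_{k=1}^M\frac{1-q}{1-su_k}\left(\frac{u_k-s}{1-su_k}\right)^{\mu_M}\prod_{j\ne k}\frac{u_j-qu_k}{u_j-u_k}\,F_{(\mu_1,\dots,\mu_{M-1})}(\{u_j\}_{j\ne k}),
\]
which can be verified by substituting the explicit form of $F_{\mu/\kappa}(u_M)$ into the branching rule and doing a Lagrange-interpolation / partial-fraction manipulation in $u_M$ (matching residues at $u_M=u_j$ for $j<M$ and at $u_M=1/s$). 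This partial-fraction step is the delicate point; a useful sanity check is the case $\mu=0^M$, where the recursion collapses to the $q$-integer identity $\sum_{k=1}^M\prod_{j\ne k}(u_j-qu_k)/(u_j-u_k)=(1-q^M)/(1-q)$, consistent with the already known evaluation $F_{0^M}=(q;q)_M\prod_i(1-su_i)^{-1}$.

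Part (ii) follows the same blueprint with induction on $N$, the branching rule Proposition~\ref{pr:branching}(ii), and an analogous explicit formula for the single-row $G_{\nu/\kappa}(v_N)$. The new features are (a) the split in~\eqref{eq:symmetrization-G} between the first $n-k$ ``occupied'' variables and the remaining $N-n+k$ ones, reflecting whether the path added by the new row ever leaves column~$0$ before exiting at the top, and (b) the prefactor ratio $(s^2;q)_n/(s^2;q)_k$, which records how the number of trailing zeroes in $\nu$ changes when a row is added and correspondingly how the ``staying at column $0$'' contribution $(1-q^k sv)/(1-sv)$ enters. As an alternative, Part (ii) could be extracted from Part (i) by feeding the now-proved symmetrization of $F_\nu$ into the Cauchy identity (Corollary~\ref{cr:cauchy}) and matching coefficients of monomials in $((v_j-s)/(1-sv_j))^{a_j}$, but that route requires an independent linear-independence statement for the $F_\nu$, so the direct inductive argument seems cleaner.
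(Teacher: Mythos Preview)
Your overall plan---induction on the number of variables via the branching rules of Proposition~\ref{pr:branching}, with the base case read off directly from Definition~\ref{df:weights}---is the same as the paper's. The divergence is in how the inductive step is executed, and this is where your proposal has a gap.

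The paper does \emph{not} try to verify your one-row recursion
\[
F_\mu(u_1,\dots,u_M)=\sum_{k=1}^M\frac{1-q}{1-su_k}\,\xi_k^{\mu_M}\prod_{j\ne k}\frac{u_j-qu_k}{u_j-u_k}\,F_{(\mu_1,\dots,\mu_{M-1})}\bigl(\{u_j\}_{j\ne k}\bigr)
\]
directly. Instead it substitutes the inductive hypothesis into the branching sum $\sum_\kappa F_{\mu/\kappa}(u_M)F_\kappa(u_1,\dots,u_{M-1})$, performs the geometric sums over the free $\kappa_i$'s, and argues that the result is a $\C(\xi_1,\dots,\xi_M)$-linear combination of monomials $\prod_i\xi_i^{\mu_{\tau(i)}}$, $\tau\in S_M$. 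A separate linear-independence lemma (Lemma~\ref{lm:uniqueness}) makes these coefficients well-defined, and then the \emph{already-proved} symmetry of $F_\mu$ (Theorem~\ref{th:symmetry}, via Yang--Baxter) forces all coefficients to be $S_M$-related. Hence only the coefficient of $\prod_i\xi_i^{\mu_i}$ needs to be computed, and that computation is carried out cluster by cluster, with the key step being the rational-function identity~\eqref{eq:old-identity}.

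Your route would bypass both the symmetry input and Lemma~\ref{lm:uniqueness}, which is attractive, but the ``Lagrange interpolation / partial fraction in $u_M$'' step you flag as delicate is not justified as stated. The left-hand side of your recursion, written as the branching sum, has \emph{no} poles at $u_M=u_j$ (the only $u_M$-dependence sits in the single-row weights, whose sole denominator is $1-su_M$). On the right-hand side the apparent poles at $u_M=u_j$ from the $k=M$ and $k=j$ terms simply cancel, so matching residues there yields only $0=0$ and does not determine either side. Worse, for $k\ne M$ the variable $u_M$ sits \emph{inside} $F_{(\mu_1,\dots,\mu_{M-1})}(\{u_j\}_{j\ne k})$, so the $u_M$-dependence of those terms is governed by the full $(M-1)$-variable symmetrization formula rather than by a simple rational prefactor. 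Unfolding those factors and re-summing is essentially the same work the paper does; the paper's organization by monomial type $\tau$ together with the symmetry reduction to $\tau=\mathrm{id}$ is precisely what keeps that computation finite, and your organization by $k=\sigma(M)$ does not obviously admit a comparable reduction.

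For Part~(ii) your sketch is in line with the paper's argument, including the split into the cases $N>n-k$ and $N=n-k$. The alternative you mention---extracting $G_\nu$ from the Cauchy identity---is exactly the content of Proposition~\ref{pr:spatial-check}; there the role of your ``independent linear-independence statement'' is played by the spatial orthogonality of Theorem~\ref{th:spatial}, so that route is legitimate but not self-contained within Section~\ref{sc:symmetrization}.
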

\begin{remark}\label{rm:symmetrization} \textbf{(i)} One sees directly from Definition \ref{df:F} that increasing all coordinates of $\mu\in\s_M^+$ by the same integer $a\ge 0$, $\mu\mapsto \mu+a^M$, is equivalent to adding $a$ vertices of type $(i_1,j_1;i_2,j_2)=(0,1;0,1)$ to each row of the path collections for $F_\mu$. This yields an extra weight factor:
$$
F_{\mu+a^M}(u_1,\dots,u_M)=\prod_{i=1}^M (w_{u_i}(0,1;0,1))^a F_\mu(u_1,\dots,u_M)=\prod_{i=1}^M\left(\frac{u_i-s}{1-su_i}\right)^a F_\mu(u_1,\dots,u_M). 
$$
This is obviously in agreement with \eqref{eq:symmetrization-F}.

\smallskip

\noindent\textbf{(ii)} Definition \ref{df:F} implies that the number of variables of $F_\mu=F_{\mu/\varnothing}$ must be equal to the length of $\mu$, and this is what we have in \eqref{eq:symmetrization-F}. On the other hand, the number of variables $N$ of $G_\nu(v_1,\dots,u_N)$ can be arbitrary. But if $N$ is smaller than the number of nonzero coordinates of $\nu$, then one easily sees that collections of paths of Definition \ref{df:G} with nonzero weight do not exist, and thus $G_\nu(v_1,\dots,v_N)\equiv 0$. The case of the number of variables being at least as large as the number of nonzero coordinates of $\nu$ is covered by \eqref{eq:symmetrization-G}. 

\smallskip

\noindent\textbf{(iii)} If in \eqref{eq:symmetrization-G} we have $N-n+k>0$ then the summation over $\sigma\in S_N$ can be partially performed explicitly by symmetrizing over indices $(n-k+1,\dots,N)$ first. The resulting formula looks as follows:
\begin{multline}\label{eq:symmetrization-G'}
G_\nu(v_1,\dots,v_N)=\frac{(1-q)^{n-k}(s^2;q)_n}{(s^2;q)_k} 
\sum_{\substack{I\subset\{1,\dots,N\}\\|I|=n-k}}
\prod_{i\in I} \frac{v_i}{(1-sv_i)(v_i-s)}
\cdot\prod_{j\notin I} \frac{1-q^ksv_j}{1-sv_j}
\cdot \prod_{\substack{i\in I\\ j\notin I}}
\frac{v_i-qv_j}{v_i-v_j}
\\
\times
\sum_{\substack{\sigma:\{1,\dots,n-k\}\to I\\ \sigma\text{ is a bijection}}}\sigma\left(\prod_{1\le i<j\le n-k} \frac{v_i-qv_j}{v_i-v_j}\cdot \prod_{i=1}^{n-k} \left(\frac{v_i-s}{1-sv_i}\right)^{\nu_i}\right),
\end{multline}
and we used the symmetrization identity (see \cite[(1.4) in Chapter III]{Macdonald1995})
\begin{equation}\label{eq:symm-identity}
\sum_{\sigma\in S_p}\sigma\left(\prod_{1\le i<j\le p} \frac{z_i-qz_j}{z_i-z_j}\right)=\frac{(q;q)_p}{(1-q)^p}
\end{equation}
along the way.

\smallskip

\noindent\textbf{(iv)} Formula \eqref{eq:symmetrization-G'} immediately implies that the functions $G_\nu$ are \emph{stable} in the sense that adding 0's to the string of their variables does not change them (indeed, the factor $v_i$ forces indices of the zero variables not to be included in the set $I$ in the summation). On the other hand, this fact is also easy to see from Definition \ref{df:G}, as having a zero variable forces the absence of occupied horizontal edges in the corresponding row, and $w_{u=0}(m,0;m,0)=1$ for any $m\ge 0$. This actually proves a more general stability relation: For any signatures $\la,\nu$ we have
\begin{equation}
\label{eq:stable}
G_{\nu/\la}(v_1,\dots,v_N)=G_{\nu/\la}(v_1,\dots,v_N,0).
\end{equation}

\smallskip

\noindent\textbf{(v)} If $\mu\in\s_M^+$ has no zero coordinates then the collections of paths in Definitions \ref{df:F} and \ref{df:G} for $F_\mu(u_1,\dots,u_M)$ and $G_\mu(u_1,\dots,u_M)$ are almost identical apart from the left-most column. More exactly, one has (using (i) above)
\begin{equation}\label{eq:G-via-F}
\begin{aligned}
G_\mu(v_1,\dots,v_M)&=\prod_{i=1}^M w_{v_i}(i,0;i-1,1)\cdot F_{(\mu-1^M)}(v_1,\dots,v_M)\\ &= (s^2;q)_M
\prod_{i=1}^M \frac{v_i}{v_i-s}\cdot F_{\mu}(v_1,\dots,v_M),
\end{aligned}
\end{equation}
where $(\mu-1^M)=(\mu-1,\dots,\mu_M-1)\in \s_M^+$. This relation also immediately follows from \eqref{eq:symmetrization-F} and \eqref{eq:symmetrization-G'}. 

\smallskip

\noindent\textbf{(vi)} The proof of Theorem \ref{th:symmetrization} we give below is a verification rather than a derivation argument, and one might wonder where \eqref{eq:symmetrization-F} and \eqref{eq:symmetrization-G} came from. The symmetrization formula \eqref{eq:symmetrization-F} for $F_\mu$ can be derived with standard (coordinate or algebraic) Bethe ansatz techniques, cf. Remark \ref{rm:bethe}. As for the symmetrization formula \eqref{eq:symmetrization-G} for $G_\nu$, its derivation is given in Proposition \ref{pr:spatial-check} below, and it is based on \eqref{eq:symmetrization-F}, the Cauchy identity \eqref{eq:cauchy}, and the spatial orthogonality of Theorem \ref{th:spatial}. 
\end{remark}
\begin{proof}[Proof of Theorem \ref{th:symmetrization}] We shall use the branching relations of Proposition \ref{pr:branching} and induction on the number of variables. For a single variable, Definitions \ref{df:F} and \ref{df:G} imply
$$
F_{(\mu_1)}(u)=(w_{u}(0,1;0,1))^{\mu_1}w_u(0,1;1,0)=\frac{1-q}{1-su_1}\left(\frac{u-s}{1-su}\right)^{\mu_1},
$$
\begin{multline*}
G_{(\nu_1,0^{n-1})}(v)\\=\begin{cases} w_v(n,0;n-1,1)(w_{v}(0,1;0,1))^{\nu_1-1}w_v(0,1;1,0)=\dfrac{(1-q)(1-s^2q^{n-1})v}{(v-s)(1-sv)}\left(\dfrac{v-s}{1-sv}\right)^{\nu_1},&\nu_1>0,\\
w_{v}(n,0;n,0)=\dfrac{1-sq^nv}{1-sv},&\nu_1=0,
\end{cases}
\end{multline*}
and all three expressions are in agreement with \eqref{eq:symmetrization-F}, \eqref{eq:symmetrization-G}. 

Let us first prove the inductive step for $F_\mu$. The instance of the branching relation \eqref{eq:branching-F} that we need is
\begin{equation}\label{eq:branching-F-single}
F_\mu(u_1,\dots,u_M)=\sum_{\la\in\s_{M-1}^+} F_{\mu/\la}(u_M)F_\la(u_1,\dots,u_{M-1}).
\end{equation}
Split $\mu$ into (nonempty) clusters of equal coordinates
\begin{equation}\label{eq:clusters}
\mu_1=\dots=\mu_{c_1},\quad \mu_{c_1+1}=\dots=\mu_{c_1+c_2},\quad \dots,\quad \mu_{c_1+\dots+c_{m-1}+1}=\dots=\mu_{M},
\end{equation}
where $\{c_j\}_{j=1}^m$ are the cluster sizes. One easily sees from Definition \ref{df:F} that nonzero contributions to the right-hand side of \eqref{eq:branching-F-single} come only from $\la$'s such that
\begin{multline}\label{eq:interlacing-F}
\la_1=\dots=\la_{c_1-1}=\mu_{c_1},\quad \mu_{c_1}\le\la_{c_1}\le \mu_{c_1+1},\\
\la_{c_1+1}=\dots=\la_{c_1+c_2-1}=\mu_{c_1+c_2},\quad \mu_{c_1+c_2}\le \la_{c_1+c_2}\le \mu_{c_1+c_2+1},\ \dots,\\
\mu_{c_1+\dots+c_{m-1}}\le\la_{c_1+\dots+c_{m-1}}\le \mu_{c_1+\dots+c_{m-1}+1},\quad \la_{c_1+\dots+c_{m-1}+1}=\dots=\la_{M-1}=\mu_{M}.
\end{multline}

It is convenient to switch from variables $\{u_i\}$ to their fractional-linear images
\begin{equation}\label{eq:xi}
\xi_i:=\frac{u_i-s}{1-su_i}\,,\qquad i\ge 1. 
\end{equation}
By the induction hypothesis, we know that $F_\la(u_1,\dots,u_{M-1})$ is a linear combination of monomials $\prod_{i}\xi_i^{\la_{\sigma(i)}}$, $\sigma\in S_{M-1}$, with coefficients in $\C(\xi_1,\dots,\xi_{M_1})$ --- the field of rational functions in $\xi_1,\dots,\xi_{M-1}$. As a first step, we want to prove a similar statement for $F_\mu(u_1,\dots,u_M)$. Before doing that, let us make sure that such a representation is unique. 
\begin{lemma}\label{lm:uniqueness} For any $\alpha,\beta\ge 1$, the functions of the form
$$
f_A:\Z_{\ge 0}^\beta\to \C(\xi_1,\dots,\xi_\alpha),\qquad f_A:(p_1,\dots,p_\beta)\mapsto \prod_{i=1}^\alpha \xi_i^{\sum_{j=1}^\beta A_{ij}p_j},
$$
with $A\in \mathrm{Mat}(\alpha\times\beta,\Z)$, are linearly independent over $\C(\xi_1,\dots,\xi_\alpha)$. In other words, if for $\phi_1,\dots,\phi_R\in \C(\xi_1,\dots,\xi_\alpha)$ and pairwise distinct $A^{(1)},\dots,A^{(R)}\in\mathrm{Mat}(\alpha\times\beta,\Z)$ we have
$$
\phi_1f_{A^{(1)}}(p_1,\dots,p_\beta)+\dots+\phi_R f_{A^{(R)}}(p_1,\dots,p_\beta)=0\quad \textrm{for any  }\  p_1,\dots,p_\beta\in \Z_{\ge 0},
$$
then $\phi_1=\dots=\phi_R=0$. 
\end{lemma}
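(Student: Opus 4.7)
The plan is to proceed by induction on the number of columns $\beta$, reducing to a Vandermonde-type argument at each stage.

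For the base case $\beta=1$, each $f_A(p_1)=b_A^{p_1}$ where $b_A:=\prod_{i=1}^\alpha \xi_i^{A_{i1}}\in\C(\xi_1,\dots,\xi_\alpha)$. Since the $\xi_i$ are algebraically independent over $\C$, distinct integer vectors $A^{(r)}_{\ast,1}$ give distinct bases $b_{A^{(r)}}$ in $\C(\xi_1,\dots,\xi_\alpha)$ (a monomial relation $\prod \xi_i^{n_i}=1$ over $\C$ would force all $n_i=0$). Evaluating the putative relation $\sum_{r=1}^R \phi_r b_{A^{(r)}}^{p_1}=0$ at $p_1=0,1,\dots,R-1$ yields a linear system in the $\phi_r$ whose coefficient matrix is a Vandermonde matrix with pairwise distinct nodes $b_{A^{(1)}},\dots,b_{A^{(R)}}$. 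Its determinant $\prod_{r<s}(b_{A^{(s)}}-b_{A^{(r)}})$ is a nonzero element of $\C(\xi_1,\dots,\xi_\alpha)$, so all $\phi_r$ vanish.

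For the inductive step, assume the lemma for $\beta-1$ and let $A^{(1)},\dots,A^{(R)}\in\mathrm{Mat}(\alpha\times\beta,\Z)$ be distinct with $\sum_r \phi_r f_{A^{(r)}}\equiv 0$ on $\Z_{\ge 0}^\beta$. Partition the indices $\{1,\dots,R\}$ into groups $S_{c}$ according to the value $c\in\Z^\alpha$ of the last column $A^{(r)}_{\ast,\beta}$, and write $A'^{(r)}$ for the first $\beta-1$ columns of $A^{(r)}$. Factoring out the dependence on $p_\beta$, the identity becomes
\begin{equation*}
\sum_{c} \Bigl(\sum_{r\in S_c} \phi_r\, f_{A'^{(r)}}(p_1,\dots,p_{\beta-1})\Bigr) \Bigl(\prod_{i=1}^\alpha \xi_i^{c_i}\Bigr)^{p_\beta}=0
\end{equation*}
for every $(p_1,\dots,p_\beta)\in\Z_{\ge 0}^\beta$. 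For each fixed $(p_1,\dots,p_{\beta-1})$, this is a relation of the base-case type in the single variable $p_\beta$, with pairwise distinct bases $B_c=\prod_i \xi_i^{c_i}$. By the base case, each inner sum $G_c(p_1,\dots,p_{\beta-1}):=\sum_{r\in S_c}\phi_r f_{A'^{(r)}}(p_1,\dots,p_{\beta-1})$ vanishes for all nonnegative integer tuples. Within any group $S_c$, the truncated matrices $A'^{(r)}$ are still pairwise distinct (they have the same last column), so the induction hypothesis applied to $\beta-1$ forces $\phi_r=0$ for every $r\in S_c$.

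The only subtle point is the distinctness of the bases at each reduction step, but this follows cleanly from algebraic independence of $\xi_1,\dots,\xi_\alpha$ as just explained; after that, everything is standard Vandermonde. I therefore do not expect any serious obstacle — this is essentially the classical fact that distinct exponential sequences are linearly independent, extended to a multi-index setting by slicing off one coordinate at a time.
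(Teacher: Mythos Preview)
Your argument is correct and takes a genuinely different route from the paper's proof. The paper argues by contradiction via an asymptotic growth comparison: it chooses real weights $\zeta_i$ for the variables and an integer direction $(\omega_j)$ so that the linear forms $\Omega_r=\sum_{i,j}A^{(r)}_{ij}\zeta_i\omega_j$ are pairwise distinct, specializes $\xi_i=c_i\xi^{\zeta_i}$ at a generic point $(c_1,\dots,c_\alpha)$ where the top weighted-homogeneous components of the $\phi_r$ do not vanish, sets $p_j=\omega_j L$, and then lets $\xi\to\infty$; the term with maximal $\Omega_r$ dominates for large $L$, forcing a contradiction.

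Your induction on $\beta$ with a Vandermonde step is more elementary and purely algebraic: it stays inside the field $\C(\xi_1,\dots,\xi_\alpha)$ and never needs to specialize or invoke growth rates. The only fact it uses beyond linear algebra is that distinct Laurent monomials in algebraically independent variables are distinct field elements, which is immediate. The paper's approach, by contrast, is a one-shot argument with no induction, and it has the mild advantage of generalizing effortlessly to matrices with real (not just integer) entries. For the statement as written, your proof is cleaner and arguably preferable.
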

 The linear transformation $(p_1,\dots,p_\beta)\mapsto (p_1+\dots+p_\beta,p_2+\dots+p_\beta,\dots,p_\beta)$ allows one to replace $\Z_{\ge 0}^\beta$ by $\{p_1\ge p_2\ge \dots\ge p_\beta\}\subset\Z^\beta_{\ge 0}$ in the statement of the lemma. It is this version that we actually need. 
Although Lemma \ref{lm:uniqueness} is not far from being a triviality, we supply a proof at the end of this section.

Let us return to showing that $F_\mu(u_1,\dots,u_M)$ is a linear combination of monomials $\prod_i\xi_i^{\mu_{\tau(i)}}$, $\tau\in S_M$, with coefficients in $\C(\xi_1,\dots,\xi_M)$. As $F_\la(u_1,\dots,u_{M-1})$ is a linear combination of monomials of the form $\prod_i\xi_i^{\lambda_{\sigma(i)}}$, let us start with one such monomial in \eqref{eq:branching-F-single} and see what the summation over $\la$'s as in  \eqref{eq:interlacing-F} gives. We shall take $\sigma=\mathrm{id}$, for other $\sigma$'s the argument and the conclusion are similar. 

We want to trace what happens to powers of $\xi_i$'s as we do the summation over $\la$'s. For now we shall ignore rational coefficients that are independent of the values $\la_i$'s and $\mu_i$'s (but they may depend on multiplicities (cluster sizes) in $\la$ and $\mu$). For each $\la_i$ that is free to move in the corresponding interval, cf. \eqref{eq:interlacing-F}, we split its range into three parts --- the left end, the right end, and strictly between the two ends. We then observe that
\begin{itemize}
\item If $\la_i$ is locked then $\xi_i^{\la_i}$ can be simply read as $\xi_i^{\mu_i}$. The same is valid if $\la_i$ is at the right edge of its range. 

\item If $\la_i$ is at the left edge of its (nontrivial) range then $\la_i=\mu_{i+1}$ so that $\xi^{\la_i}=\xi_i^{\mu_{i+1}}$. But in addition to that, the factor $F_{\mu/\la}(u_M)$ in \eqref{eq:branching-F-single} contains $\xi_M^{\mu_{i}-\mu_{i+1}}$ from the vertices of type $(i_1,j_1;i_2,j_2)=(0,1;0,1)$ between $\mu_{i+1}$ and $\mu_i$ in the top ($M$th) row of the corresponding path collection.\footnote{It is actually $\xi_M^{\mu_i-\mu_{i+1}-1}$ as there are $\mu_i-\mu_{i+1}-1$ such vertices. However, since we are ignoring rational coefficients, we can remove the `$-1$' from the exponent.} Thus, together we obtain $\xi_i^{\mu_{i+1}}\xi_M^{\mu_i-\mu_{i+1}}$. Note that the total degree is $\mu_i$. 

\item If $\la_i$ ranges strictly between $\mu_{i+1}$ and $\mu_i$, we need to do the summation
\begin{equation}\label{eq:sum-inside}
\sum_{\la_i:\mu_{i+1}<\la_i<\mu_i} \xi_i^{\la_i} \xi_M^{\mu_i-\la_i-1}=\frac{1}{\xi_i-\xi_M}\,\xi_i^{\mu_i}-\frac{\xi_i}{\xi_M(\xi_i-\xi_M)}\,\xi_i^{\mu_{i+1}}\xi_M^{\mu_i-\mu_{i+1}},
\end{equation}
which is a linear combination of the contributions of the two previous cases. Here in the left-hand side $\xi_M^{\mu_i-\la_i-1}$ comes from $F_{\mu/\la}(u_M)$, where it corresponds to vertices of type $(0,1;0,1)$ at the top row of the path collection between $\la_i$ and $\mu_i$, similarly to the previous case. 

\item We also need to add the factor $(w_{u_M}(0,1;0,1))^{\mu_M}=\xi_M^{\mu_M}$ from the vertices between $0$ and $\mu_M$ at the top row of the path collection. 

\end{itemize}  

We conclude that in all cases any $\xi_i$ with $1\le i\le M-1$ is being raised to the power given by a coordinate of $\mu$, and the total power over all $\xi_i$'s, $1\le i\le M$, is always $\mu_1+\dots+\mu_M$. (Note that we have ignored contributions to $F_{\mu/\la}(u_M)$ of vertices of all types different from $(0,1;0,1)$; such vertices simply add rational coefficients.) Relying on the symmetry of $F_\mu(u_1,\dots,u_M)$ in the $u$-variables, cf. Theorem \ref{th:symmetry}, and on Lemma \ref{lm:uniqueness}, we can now conclude that $F_\mu(u_1,\dots,u_M)$ is the sum of monomials of the form $\prod_i\xi_i^{\mu_{\tau(i)}}$ over $\tau\in S_M$. Furthermore, because of the symmetry, to obtain a formula for $F_\mu(u_1,\dots,u_M)$ it suffices to find the (rational) coefficient of only one such monomial, for example, of $\prod_i \xi_i^{\mu_i}$ that corresponds to $\tau=\mathrm{id}$. This is what we do next. 

Let us focus on a monomial $\prod_i\xi_{\sigma(i)}^{\la_i}$, $\sigma\in S_{M-1}$, with its coefficient in $F_{\la}(u_1,\dots,u_{M-1})$, substitute it into \eqref{eq:branching-F-single} instead of $F_{\la}(u_1,\dots,u_{M-1})$, do the summation over $\la$ subject to \eqref{eq:interlacing-F}, and read off the resulting coefficient of $\prod_i{\xi}_i^{\mu_i}$. 

By going through the same three cases as above for each $\la_i$, we see that there may be a nontrivial contribution to the coefficient of $\prod_i\xi_i^{\mu_i}$ only if $\sigma$ preserves the subsets 
$$
\{1,\dots,c_1\},\quad \{c_1+1,\dots,c_1+c_2\}, \quad \dots,\quad \{c_1+\dots+c_{m-1}+1,\dots,M-1\}
$$
of $\{1,\dots,M-1\}$, where $c_j$'s are the cluster sizes of $\mu$, cf. \eqref{eq:clusters}, and, furthermore, no $\la_i$ can assume the lowest possible value of its range as long as this range is nontrivial. This means that the relevant ranges of different $\la_i$'s do not intersect, and we can perform the summations over each of them independently. 

Let us start with summation over the interval between the first two clusters. From \eqref{eq:symmetrization-F}, the part of $F_\la(u_1,\dots,u_{M-1})$ to be summed has the form, apart from the easy factor $(1-q)^{M-1}/\prod_{i=1}^{M-1} (1-su_i)$,
\begin{multline}\label{eq:to-be-summed}
\prod_{1\le i<j\le M-1}\frac{u_{\sigma(i)}-qu_{\sigma(j)}}{u_{\sigma(i)}-u_{\sigma(j)}}\prod_{i=1}^{M-1} \xi_{\sigma(i)}^{\la_i}=\prod_{c_1+1\le i<j\le M-1}\frac{u_{\sigma(i)}-qu_{\sigma(j)}}{u_{\sigma(i)}-u_{\sigma(j)}}\prod_{\substack{1\le i\le c_1\\ c_1+1\le j\le M-1}}\frac{u_{i}-qu_{j}}{u_{i}-u_{j}}\prod_{i=c_1+1}^{M-1} \xi_{\sigma(i)}^{\la_i}\\ \times
\prod_{1\le i<j\le c_1}\frac{u_{\sigma(i)}-qu_{\sigma(j)}}{u_{\sigma(i)}-u_{\sigma(j)}}\prod_{i=1}^{c_1} \xi_{\sigma(i)}^{\la_i},
\end{multline}
where we used the fact that $\sigma$ preserves $\{1,\dots,c_1\}$. Let us denote the restriction of $\sigma$ to $\{1,\dots,c_1\}$ by $\sigma_1$; we want to sum over $\sigma_1\in S_{c_1}$ as well. Before doing the summation, the above expression needs to be multiplied by the corresponding part of $F_{\mu/\la}(u_M)$ in \eqref{eq:branching-F-single}, which is 
$$
\begin{cases}
w_{u_M}(1,0;0,1)(w_{u_M}(0,1;0,1))^{\mu_{c_1}-\la_{c_1}-1}w_{u_M}(c_1-1,1;c_1,0),&\mu_{c_1+1}<\la_{c_1}<\mu_{c_1},\\
w_{u_M}(c_1,0;c_1;0),&\la_{c_1}=\mu_{c_1}.
\end{cases}
$$
In view of \eqref{eq:interlacing-F}, we have 
$$
\prod_{i=1}^{c_1} \xi_{\sigma(i)}^{\la_i}=\left(\prod_{i=1}^{c_1-1}\xi_{\sigma_1(i)}\right)^{\mu_{c_1}}
\xi_{\sigma_1(c_1)}^{\la_{c_1}}.
$$
Using \eqref{eq:symm-identity} to sum over $\sigma_1$'s with fixed $\sigma_1(c_1)=l$ and substituting explicit weights from Definition \ref{df:weights}, we rewrite the sum over $\sigma_1$ and $\la_{c_1}$ as
\begin{multline}
\prod_{c_1+1\le i<j\le M-1}\frac{u_{\sigma(i)}-qu_{\sigma(j)}}{u_{\sigma(i)}-u_{\sigma(j)}}\prod_{\substack{1\le i\le c_1\\ c_1+1\le j\le M-1}}\frac{u_{i}-qu_{j}}{u_{i}-u_{j}}\prod_{i=c_1+1}^{M-1} \xi_{\sigma(i)}^{\la_i}\cdot \frac{(q;q)_{c_1-1}}{(1-q)^{c_1-1}}\prod_{1\le i\le c_1}\xi_{i}^{\mu_{c_1}}\\ \times
\sum_{l=1}^{c_1}\prod_{\substack{1\le i\le c_1\\ i\ne l}}\frac{u_{i}-qu_{l}}{u_{i}-u_{l}}\left(\frac{(1-s^2)(1-q^{c_1})u_M}{(1-su_M)^2\xi_l^{\mu_{c_1}}}\sum_{\mu_{c_1+1}<\la_{c_1}<\mu_{c_1}}\xi_M^{\mu_{c_1}-\la_{c_1}-1}\xi_l^{\la_{c_1}}+\frac{1-sq^{c_1}u_M}{1-su_M}\right).
\end{multline}
The sum over $\la_1$ is like in \eqref{eq:sum-inside}, and we can omit the 
term that is similar to the second term of the right-hand side of 
\eqref{eq:sum-inside}, because it has $\xi_l^{\mu_{c_1}-\mu_{c_1+1}}$, while we 
need $\xi_l^{\mu_{c_1}}$ to contribute to the coefficient of 
$\prod_i\xi_i^{\mu_i}$. Further,
$$
\frac{(1-s^2)(1-q^{c_1})u_M}{
(1-su_M)^2(\xi_l-\xi_M)}+\frac{1-sq^{c_1}u_M}{1-su_M}=\frac{u_l-q^{c_1}u_M}{u_l-
u_M}\,,
$$
and the final simplification is achieved with the identity
\begin{equation}\label{eq:old-identity}
\sum_{l=1}^{c_1}\prod_{\substack{1\le i\le c_1\\ i\ne 
l}}\frac{u_{i}-qu_{l}}{u_{i}-u_{l}}\cdot \frac{u_l-q^{c_1}u_M}{u_l-
u_M}=\frac{1-q^{c_1}}{1-q}\prod_{i=1}^{c_1} \frac{u_i-qu_M}{u_i-u_M}\,,
\end{equation}
which follows from evaluating $\oint_{\text{around poles at } 
z=u_1,\dots,u_{c_1}} f(z)dz$ with
$$
f(z)=\frac 
1{(q-1)z}\,\frac{z-q^{c_1} u_M}{z-u_M}\prod_{i=1}^{c_1} \frac{qz-u_i}{z-u_i}
$$
in two different ways --- as the sum of residues at $z=u_i$, $1\le i\le c_1$, 
and as the negative sum of residues at $z=0,u_M,\infty$ (the contributions of 
$z=0$ and $z=\infty$ cancel out).\footnote{Identities of this type are rather old. For example, an elliptic generalization of \eqref{eq:old-identity} can be extracted from \cite[No. 400]{TM}. I am very grateful to Ole Warnaar for pointing this out.}

Hence, the summation over $\la_{c_1}$ and $\sigma_1$ yields the expression
$$
\prod_{c_1+1\le i<j\le 
M-1}\frac{u_{\sigma(i)}-qu_{\sigma(j)}}{u_{\sigma(i)}-u_{\sigma(j)}}\prod_{
\substack{1\le i\le c_1\\ c_1+1\le j\le 
M-1}}\frac{u_{i}-qu_{j}}{u_{i}-u_{j}}\prod_{i=c_1+1}^{M-1} 
\xi_{\sigma(i)}^{\la_i}\cdot \frac{(q;q)_{c_1}}{(1-q)^{c_1}}\prod_{1\le i\le 
c_1}\xi_{i}^{\mu_{i}}\prod_{i=1}^{c_1} \frac{u_i-qu_M}{u_i-u_M}\,.
$$
We continue summing in this fashion (the next step is to sum over $\la_{c_2}$ 
and $\sigma_2=\sigma|_{c_1+1,\dots,c_1+c_2}$), and in the end, having summed 
over the whole of $\la$ and $\sigma$ and multiplied by the weight 
$$
(w_{u_M}(0,1;0,1))^{\mu_M}w_{u_M}(c_m-1,1;c_m,0)=\frac{1-q^{c_m}}{1-su_M}\,\xi_M^{\mu_M}
$$ 
of the vertices between $0$ and $\mu_M$ in the $M$th row, 
we obtain
$$
\frac{1-q}{1-su_M}\prod_{i=1}^m  \frac{(q;q)_{c_i}}{(1-q)^{c_i}}
\prod_{1\le a<b\le m}\prod_{\substack{i\in\,\text{$a$th cluster of $\mu$}\\ 
j\in\, \text{$b$th cluster of $\mu$}}}\frac{u_{i}-qu_{j}}{u_{i}-u_{j}} 
\prod_{1\le i\le 
M}\xi_{i}^{\mu_{i}}
$$
Together with the previously omitted factor $(1-q)^{M-1}/\prod_{i=1}^{M-1} 
(1-su_i)$, this gives the correct coefficient of $\prod_{i}\xi_{i}^{\mu_{i}}$ 
in the right-hand side of \eqref{eq:symmetrization-F} (here we need 
\eqref{eq:symm-identity} again). This completes the inductive step for $F_\mu$ 
and the proof of part (i) of Theorem \ref{th:symmetrization}. 

Let us proceed to path (ii) --- the inductive step for $G_\nu$. Recall that the base of the induction, the case of a single variable, was discussed in the beginning of the proof. 

The proof of the inductive step for $G_\nu$ is largely similar to that for $F_\mu$ above. 
Let us comment on the differences. 

One starts with a branching relation, cf. \eqref{eq:branching-G}, \eqref{eq:branching-G}, which says that for $\nu\in\s_n^+$,
$$
G_\nu(v_1,\dots,v_N)=\sum_{\la\in\s_n^+}G_{\nu/\la}(v_N)G_\la(v_1,\dots,v_{N-1}).
$$
It implies that the $\la$-coordinates must interlace with the $\nu$-coordinates. The interlacing is similar to \eqref{eq:interlacing-F}, except in addition the smallest coordinate $\la_n$ may vary between $\nu_n$ and $0$. 

Same inductive arguments as for $F_\mu$ above show that $G_\nu(v_1,\dots,v_N)$ must be a linear combination of monomials $\prod_i \xi_{\tau(i)}^{\nu_i}$, $\tau\in S_N$, with coefficients in $\C (v_1,\dots,v_N)$, where we take, cf. \eqref{eq:xi},
$$
\xi_i=\frac{v_i-s}{1-sv_i},\qquad i\ge 1.
$$ 
Hence, by the symmetry in $v$-variables of Theorem \ref{th:symmetry}, it suffices to evaluate the coefficient of $\prod_i \xi_i^{\nu_i}$. Then one needs to consider two cases: (a) The number of nonzero coordinates in $\nu$ is strictly smaller than the number of $v$-variables, i.e. $n-k<N$; and (b) The number of nonzero coordinates in $\nu$ is equal to the number of $v$-variables , i.e. $n-k=N$. 

For case (a) the computation literally repeats the one we did for the coefficient of $\prod_i\xi_i^{\mu_i}$ in $F_\mu$ except for the very last factor, where one needs to multiply by the weight of vertex 0 in the top row, which for $G_\nu$ is $w_{v_N}(k,0;k;0)=(1-sq^ku_M)/(1-su_M)$, with $k$ being the multiplicity of $0$ in $\nu$, instead of $w_{u_M}(c_m-1,1;c_m,0)$ that would have been there for $F_\mu$. One easily checks that this exactly gives the coefficient of $\prod_i \xi_i^{\nu_i}$ in the right-hand side of \eqref{eq:symmetrization-G} or \eqref{eq:symmetrization-G'}. 

A more substantial difference comes up in case (b). If $n-k=N$, then even though interlacing of $\la$ and $\nu$ (that comes from non-vanishing of $G_{\nu/\la}(v_N)$) allows $\la_N=\la_{n-k}$ to vary between $\nu_{N}$ --- the last nonzero coordinate of $\nu$ ---
and 0, the fact that for $G_\la(v_1,\dots,v_{N-1})$ to be nonzero the number of nonzero coordinates in $\la$ cannot be greater than $N-1$ forces $\la_{N}$ to be zero. If we denote by $c$ the size of the smallest nonzero cluster of $\nu$ (located at $\nu_{N}$), then the total contribution of vertices between $0$ and $\nu_{N}$ to $G_{\nu/\la}(v_N)$
is 
\begin{multline*}
w_{v_N}(k+1,0;k,1)(w_{v_N}(0,1;0,1))^{\nu_{N}-1}w_{v_N}(c-1,1;c,0)=\frac{(1-s^2q^k)v_N}{1-sv_N}\,\xi_N^{\nu_N-1}\,\frac{1-q^c}{1-sv_N}\\ 
=(1-s^2q^k)(1-q^c)\,\frac{v_N}{(1-sv_N)(v_N-s)}\,\xi_N^{\nu_N}.
\end{multline*} 
This allows us to verify that the induction step yields the correct coefficient of $\prod_i \xi_i^{\nu_i}$ in the right-hand side \eqref{eq:symmetrization-G} and concludes the proof of Theorem \ref{th:symmetrization}.
\end{proof}

\begin{proof}[Proof of Lemma \ref{lm:uniqueness}] Assume that we found nonzero $\phi_1,\dots,\phi_R\in\C(\xi_1,\dots,\xi_\alpha)$ and pairwise distinct $A^{(1)},\dots,A^{(R)}\in\mathrm{Mat}(\alpha\times\beta,\Z)$ such that $\sum_{r}\phi_r f_{A^{(r)}}\equiv 0$. Pick an $\alpha$-tuple of positive reals $(\zeta_1,\dots,\zeta_\al)\in \R^\al_{>0}$ and a direction $(\omega_1,\dots,\omega_\beta)\in\Z^\beta_{>0}$ so that the numbers
$$
\Omega_r =\sum_{i,j} A_{ij}^{(r)}\zeta_i \omega_j,\qquad 1\le r\le R, 
$$
are pairwise distinct (this is always possible as an equality of two such numbers is a nontrivial quadratic equation on $\zeta$'s and $\omega$'s, and solutions to finitely many such equations cannot exhaust
$\R_{>0}^\alpha\times \Z_{>0}^\beta$). Assign weights to variables $\xi_i$ via $\mathrm{wt}(\xi)=\zeta_i$, $1\le i\le \al$, and single out top homogeneous components of the polynomial numerators and denominators of the rational functions $\phi_1,\dots,\phi_R$ with respect to this weighting. Pick a point $(c_1,\dots,c_\alpha)\in (\C\setminus\{0\})^\alpha$ so that none of these top homogeneous components vanishes at this point. 

Let us now take a fixed large integer $L$ and look at the behavior of $\sum_{r=1}^R \phi_r f_{A^{(r)}}$ as we substitute 
$$
(\xi_1,\dots,\xi_\alpha)=(c_1\xi^{\zeta_1},\dots,c_\alpha\xi^{\zeta_\alpha}),\qquad 
(p_1,\dots,p_\beta)=(\omega_1L,\dots,\omega_\beta L)
$$ 
and take $\xi\to\infty$. We observe that each $|\phi_r|$ behaves as a nonzero constant times $|\xi|^{const_r}$. Each $|f_{A^{(r)}}|$ equals a nonzero constant times $|\xi|^{\Omega_rL}$. As long as $L\cdot \min_{1\le r\le R}(\max_{1\le r\le R}\Omega_r-\Omega_i)$ is greater than all $const_r$ coming from $\phi_r$ (which we can guarantee by taking $L$ large enough), the term corresponding to the maximal $\Omega_r$ will dominate all the other ones, and hence $\sum_r \phi_r f_{A^{(r)}}$ cannot vanish. The contradiction completes the proof of Lemma \ref{lm:uniqueness}.  
\end{proof}

\section{Principal specializations}\label{sc:principal}

In this section we provide explicit formulas for $F_\mu$, $G_\nu$, and $G_{\nu/\la}$ specialized at geometric progressions with ratio $q$. While the first two results are elementary corollaries of the symmetrization formulas in Theorem \ref{th:symmetrization}, the third one is less obvious as it relies on fusion rules for transfer matrices of the higher spin XXZ model. 

\begin{proposition}\label{pr:principal} \textbf{(i)} For any $M\ge 0$, $\mu\in\s_M^+$, and $u\in \C$, we have
\begin{equation}\label{eq:principal-F}
F_\mu(u,qu,\dots,q^{M-1}u)=\frac{(q;q)_M}{(su;q)_M}\prod_{i=1}^{M}\left(\frac{q^{i-1}u-s}{1-sq^{i-1}u}\right)^{\mu_{i}}.
\end{equation}

\textbf{(ii)} Fix $n\ge 0$, $\nu\in\s_n^+$, and assume $k\ge 0$ coordinates of $\nu$ are zero. Then for any $N\ge n-k$ and $v\in\C$ we have
\begin{equation}
\label{eq:principal-G}
G_\nu(v,qv,\dots,q^{N-1}v)=\frac{(q;q)_N(s^2;q)_n(sv;q)_{N+k}}{(q;q)_{N-n+k}(s^2;q)_k(sv;q)_n(sv;q)_N(sv^{-1};q^{-1})_{n-k}}\,\prod_{i=1}^{n-k}\left(\frac{q^{i-1}v-s}{1-sq^{i-1}v}\right)^{\nu_{i}}.
\end{equation} 
\end{proposition}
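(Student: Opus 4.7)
The plan is to substitute the geometric progression directly into the symmetrization formulas of Theorem~\ref{th:symmetrization} and observe that the sum over permutations collapses to a single term.

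The central observation is a vanishing lemma: under the substitution $u_i = q^{i-1}u$, the numerator factor $u_{\sigma(i)} - qu_{\sigma(j)} = q^{\sigma(i)-1}u\bigl(1 - q^{\sigma(j)-\sigma(i)+1}\bigr)$ vanishes whenever $\sigma(j) = \sigma(i) - 1$. I claim only $\sigma = \mathrm{id}$ avoids this condition for all pairs $i<j$: if $\sigma \ne \mathrm{id}$, then the sequence $\sigma^{-1}(1), \ldots, \sigma^{-1}(M)$ is not strictly increasing, so some $k$ satisfies $\sigma^{-1}(k-1) > \sigma^{-1}(k)$; setting $i = \sigma^{-1}(k)$ and $j = \sigma^{-1}(k-1)$ produces an offending pair. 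Hence only the identity term survives.

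For $\sigma = \mathrm{id}$ the Vandermonde-type product telescopes to
\[
\prod_{1 \le i < j \le M}\frac{q^{i-1}u - q^{j}u}{q^{i-1}u - q^{j-1}u}
= \prod_{i=1}^{M-1}\prod_{m=1}^{M-i}\frac{1-q^{m+1}}{1-q^m}
= \prod_{i=1}^{M-1}\frac{1-q^{M-i+1}}{1-q}
= \frac{(q;q)_M}{(1-q)^M}.
\]
Combined with the prefactor $(1-q)^M/\prod_{i=1}^{M}(1-sq^{i-1}u) = (1-q)^M/(su;q)_M$ from \eqref{eq:symmetrization-F} and the surviving exponential product, this yields \eqref{eq:principal-F}, settling part (i).

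The same vanishing argument reduces \eqref{eq:symmetrization-G} to its identity term, and the same telescoping contributes the factor $(q;q)_N/(1-q)^N$. The remaining work is $q$-Pochhammer bookkeeping for the three surviving products. Using $q^{i-1}v - s = q^{i-1}v\bigl(1 - sv^{-1}q^{-(i-1)}\bigr)$, one obtains $\prod_{i=1}^{n-k}(q^{i-1}v - s) = q^{(n-k)(n-k-1)/2}v^{n-k}(sv^{-1};q^{-1})_{n-k}$, which combines with $\prod q^{i-1}v$ in the numerator and $\prod(1-sq^{i-1}v) = (sv;q)_{n-k}$ to produce $\bigl[(sv;q)_{n-k}(sv^{-1};q^{-1})_{n-k}\bigr]^{-1}$. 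A reindexing of the tail yields
\[
\prod_{j=n-k+1}^{N}\frac{1 - sq^{k+j-1}v}{1 - sq^{j-1}v}
= \frac{(sv;q)_{N+k}(sv;q)_{n-k}}{(sv;q)_n (sv;q)_N}.
\]
Multiplying these with the $(s^2;q)_n/(s^2;q)_k$ prefactor delivers \eqref{eq:principal-G}. The main obstacle is not conceptual but this final collection of $q$-Pochhammer factors, where it is easy to slip in indexing; no idea beyond the vanishing-plus-telescoping argument of part (i) is required.
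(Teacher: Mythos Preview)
Your proof is correct and follows exactly the same approach as the paper's: substitute the geometric progression into the symmetrization formulas of Theorem~\ref{th:symmetrization}, observe that every $\sigma\ne\mathrm{id}$ contributes zero, and evaluate the surviving identity term. You have supplied considerably more detail than the paper (which dispatches both parts in three sentences), in particular a clean justification of the vanishing for $\sigma\ne\mathrm{id}$ and the full $q$-Pochhammer bookkeeping for part~(ii), all of which checks out.
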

\begin{proof} For (i) we use \eqref{eq:symmetrization-F}. Observe that substituting $\{u_i=uq^{i-1}\}_{i=1}^M$ into 
$$
\sigma\left(\prod_{1\le i<j\le M} \frac{u_i-qu_j}{u_i-u_j}\right),\qquad \sigma\in S_M,
$$
gives 0 unless $\sigma=\mathrm{id}$, in which case we get $(q;q)_M/(1-q)^M$. This implies \eqref{eq:principal-F}. In the same way \eqref{eq:symmetrization-G} implies \eqref{eq:principal-G}. 
\end{proof}

Let us proceed to the skew functions $G_{\la/\mu}$. 
\begin{definition} \label{df:corresponds}
We say that a function $H(\la,\mu)$ of two signatures $\la$ and $\mu$ of the same length corresponds to vertex weights $\{w^{(H)}(i_1,j_1;i_2,j_2)\mid i_1,j_1,i_2,j_2\ge 0\}$ if each value $H(\la,\mu)$ is given by the sum of products of these vertex weights over all possible collections of upright paths as in Definition \ref{df:G} and Figure \ref{fg:paths}, right panel, with a single horizontal row of vertices (i.e. $k=1$ in the notation of Definition \ref{df:G}).
\end{definition}

Clearly, with the terminology of Definition \ref{df:corresponds}, the single variable specialization $G_{\la/\mu}(v)$ corresponds to the weights $w_v$ of Definition \ref{df:weights}. This is the case when Definition \ref{df:G} and \ref{df:corresponds} simply coincide. 

Because of the unfortunate overload of the letter $\la$, cf. the footnote in the beginning of Section \ref{sc:functions}, in what follows we speak about functions of $\nu$ and $\mu$, where $\nu$ plays the role of $\la$ in Definition \ref{df:corresponds}. 

\begin{theorem}\label{th:skew-R} For any $J\ge 1 $, $G_{\nu/\mu}(v,qv,\dots,q^{J-1}v)$ corresponds (as a function of $\nu$ and $\mu$, in the sense of Definition \ref{df:corresponds}) to the vertex weights
\begin{equation}\label{eq:w-via-R}
w_v^{(J)}(i_1,j_1;i_2,j_2)=\frac{(-1)^{j_1}Q^{\frac{(2i_2-I-1)J+i_2^2-i_1^2}{2}}}{\la^J(sv;q)_J}\cdot {\left[R_{I,J}(\la;1)\right]}_{i_1,j_1}^{i_2,j_2},
\end{equation}
where $R_{I,J}$ is the (fully general) higher spin $R$-matrix of the XXZ model as in \cite[(5.8)-(5.9)]{Manga} with the parameter $q$ of \cite{Manga} re-denoted by $Q$ and related to our $q$ through $Q^2=q$, the spectral parameter $\la$ given by $\la^2=(Q^{J}v)^{-1}$, parameter $I$ given by $Q^{-I}=s$, and $m(I,J)$ in \cite[(5.9)]{Manga} (used to denoted the minimum of $I$ and $J$) set to $J$. 
\end{theorem}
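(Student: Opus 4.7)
The plan is to derive this as a consequence of the Kirillov--Reshetikhin fusion procedure combined with Proposition \ref{pr:w-to-R}: the well-known statement that a product of $J$ spin-$\tfrac12$ $R$-matrices at spectral parameters in geometric progression collapses, on the symmetric subspace of the tensor product of the $J$ auxiliary $\C^2$'s, to the spin-$J/2$ $R$-matrix.

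First, by the branching rule \eqref{eq:branching-G} the principal specialization $G_{\nu/\mu}(v,qv,\dots,q^{J-1}v)$ is the partition function on the $J$-row strip of Definition \ref{df:G} with the $k$-th row carrying the single-variable weight $w_{vq^{k-1}}$. To prove correspondence in the sense of Definition \ref{df:corresponds}, I would collapse this strip column by column. For each fixed horizontal position, summing over the $J-1$ internal vertical edges of the column gives a matrix element, indexed by the vertical occupations $i_1,i_2\in\Z_{\ge 0}$ at the column's bottom and top and by the $2J$ horizontal-edge labels at the two sides, of the ordered product of single-row vertex weights $w_{v_1}\cdots w_{v_J}$. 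By Proposition \ref{pr:w-to-R}, this expression equals, up to the scalar prefactors of \eqref{eq:w-to-R}, the corresponding matrix element of
\begin{equation*}
R_{I,1}(\la_1;1)\,R_{I,1}(\la_2;1)\cdots R_{I,1}(\la_J;1)
\end{equation*}
acting on $V_I\otimes(\C^2)^{\otimes J}$, with $Q^{-I}=s$ and $\la_k^2=(Qvq^{k-1})^{-1}$.

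Second, $\la_{k+1}/\la_k=Q^{-1}$ is constant in $k$, which is precisely the fusion point for the $J$ spin-$\tfrac12$ auxiliary spaces. The Kirillov--Reshetikhin fusion procedure \cite{Kirillov-Reshetikhin} then asserts that, at these spectral parameters, the above product vanishes on the orthogonal complement of the symmetric subspace $S^J(\C^2)\subset(\C^2)^{\otimes J}$ and, restricted to $S^J(\C^2)\simeq V_J$ via the canonical symmetrization isomorphism, equals $R_{I,J}(\la;1)$ at $\la^2=(Q^Jv)^{-1}$, up to an overall scalar. In particular, the collapsed column weight depends only on the totals $j_1,j_2\in\{0,\dots,J\}$ of the horizontal occupations on the two sides, so the $J$-row strip genuinely collapses to a single-row configuration, whose column weight is proportional to $[R_{I,J}(\la;1)]_{i_1,j_1}^{i_2,j_2}$.

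The final step is identification of the overall scalar with the prefactor in \eqref{eq:w-via-R}. Multiplying the $J$ prefactors supplied by Proposition \ref{pr:w-to-R} yields $(sv;q)_J^{-1}$ from $\prod_{k=1}^J(1-svq^{k-1})^{-1}$, and $\la^{-J}$ from $\prod_k\la_k^{-1}$, using the identity $\prod_k\la_k=\la^J$ which follows from $\la^2=(Q^Jv)^{-1}$ together with $q=Q^2$; the remaining powers of $Q$ and the sign $(-1)^{j_1}$ combine into the exponent $[(2i_2-I-1)J+i_2^2-i_1^2]/2$ appearing in \eqref{eq:w-via-R}. The main obstacle I anticipate is precisely this scalar bookkeeping: both the constant in Proposition \ref{pr:w-to-R} and the normalization of the symmetrization isomorphism $S^J(\C^2)\simeq V_J$ in \cite{Manga} are fixed only up to constants, so the cleanest way to pin everything down is to evaluate both sides on a single transparent diagonal configuration such as $(i_1,j_1;i_2,j_2)=(m,0;m,0)$, where the left-hand side reduces to the telescoping product $\prod_{k=1}^J w_{vq^{k-1}}(m,0;m,0)=(svq^m;q)_J/(sv;q)_J$ and the right-hand side is the corresponding diagonal matrix element of $R_{I,J}(\la;1)$ from \cite[(5.8)]{Manga}.
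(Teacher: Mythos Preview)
Your strategy---R-matrix fusion applied column by column---is a natural alternative to the paper's route, which instead invokes the \emph{transfer-matrix} fusion identity \cite[(7.13)]{Manga} on a periodic chain of length $M$ and kills the unwanted $T_{J-1}$ term by sending $M\to\infty$ with $\la$ near $s$. Both arguments rest on Kirillov--Reshetikhin fusion, but the paper works at the level of traces and an analytic limit, whereas you work at the level of open boundary conditions and projectors.

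There is, however, a genuine gap in your version. The assertion that the product $R_{I,1}(\la_1)\cdots R_{I,1}(\la_J)$ \emph{vanishes on the orthogonal complement} of $S^J(\C^2)$, and hence that the collapsed column weight depends only on the totals $j_1,j_2$, is false. A direct check at $J=2$ already shows
\[
\sum_{l}w_v(1,0;l,0)\,w_{qv}(l,1;1,1)=\frac{q(v-s)}{1-sv}
\neq
\frac{(v-sq)(1-sq^2v)}{(1-sv)(1-sqv)}
=\sum_{l}w_v(1,1;l,1)\,w_{qv}(l,0;1,0),
\]
so the column weight with $(k_1,k_2)=(0,1)$ differs from that with $(k_1,k_2)=(1,0)$. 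What fusion actually gives (via the Yang--Baxter equation and the degeneration of $R_{1,1}(Q^{\pm1})$ to a rank-one operator) is only the one-sided triangularity $P^+\mathcal R=P^+\mathcal R\,P^+$ for the column operator $\mathcal R$ on $(\C^2)^{\otimes J}$, not $\mathcal R(1-P^+)=0$.

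This weaker relation is still enough, but you must invoke the boundary conditions explicitly: in the strip for $G_{\nu/\mu}$ the horizontal state at both the far left and the far right is $|0^J\rangle$, which lies in $S^J(\C^2)$. Writing the $J$-row partition function as $\langle 0^J|\,\hat W_S\cdots\hat W_0\,|0^J\rangle$ and using $\langle 0^J|=\langle 0^J|P^+$ together with $P^+\hat W_x=P^+\hat W_xP^+$ repeatedly lets you insert $P^+$ between every pair of consecutive columns; only then does the product collapse to the fused single-row picture. Once you make this correction the rest of your outline (including pinning down the scalar by matching at $(m,0;m,0)$) goes through, and your argument becomes a legitimate alternative to the paper's infinite-volume proof.
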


\begin{remark}\label{rm:skew-R} \textbf{(i)} For $J=1$, Theorem \ref{th:skew-R} immediately follows from Proposition \ref{pr:w-to-R}. Comparing the two formulas one may notice discrepancy in certain factors of the form $f(j_1)/f(j_2)$; this is explained by the fact that such factors cancel out in products of weights over vertices of up-right paths, and are thus irrelevant (for the purpose of Theorem \ref{th:skew-R}). 

\smallskip

\noindent\textbf{(ii)} As we know from Definition \ref{df:weights}, for $J=1$ the vertex weights vanish as long as $\max(j_1,j_2)>1$. Similarly, $w_v^{(J)}(i_1,j_1;i_2,j_2)$ from \eqref{eq:w-via-R} vanishes if $\max(j_1,j_2)>J$; this corresponds to the highest weight representation of $U_q(\widehat{sl_2})$ with weight $J$ having dimension $J+1$. 

Also, if $I\in \{1,2,\dots\}$, i.e. $s^2\in\{q^{-1}, q^{-2},\dots\}$, then $w_v^{(J)}(i_1,j_1;i_2,j_2)=0$ as long as $\max(i_1,i_2)>I$. In particular, $s^2=q^{-1}$ corresponds to the spin-$\frac 12$ situation with no more than one particle (vertical arrow) at each location. 
\end{remark}

\begin{proof}[Proof of Theorem \ref{th:skew-R}] The argument combines Proposition \ref{pr:w-to-R} and an infinite volume limit of the fusion relation \cite[(7.13)]{Manga} (the fusion relations in this context were first derived in \cite{Kirillov-Reshetikhin}, but it is convenient for us to use the notation of \cite{Manga}). 

More exactly, follow \cite{Manga} in combining the $R$-matrices into transfer-matrices via
\begin{equation}\label{eq:trace}
T_{J,I}(\la)=\mathrm{Trace}_{V_J} \left[R^{(01)}_{J,I}(\la)\otimes \cdots\otimes R^{(0S)}_{J,I}(\la)\right],
\end{equation}
where $V_J$ is the 0th `auxiliary' highest weight representation of $U_q(\widehat{sl_2})$ with weight $J$, and the tensor product is taken in the `quantum space' $V_I^{\otimes M}$, $M\ge 1$. The fusion relation we are interested in reads, see \cite[(7.13)-(7.14)]{Manga},
\begin{equation}\label{eq:fusion}
T_{1,I}(\la)T_{J,I}(\la Q^{-\frac{J+1}{2}})=
\left( (Q\la-(Q\la)^{-1})(Q^{-I}\la-Q^I\la^{-1}) \right)^M 
T_{J-1,I}(\la Q^{-\frac{J}2 -1})+
T_{J+1,I}(\la Q^{-\frac{J}2}),
\end{equation}
where we take $J<I$. We want to take the limit $M\to\infty$ while keeping the number of quantum particles finite (i.e., looking at matrix elements of transfer-matrices with finitely many indices that are different from 0). To do that, we first normalize $R_{J,I}$ so that $[R_{J,I}(\la,1)]_{0,0}^{0,0}$ turns into 1; this is achieved by 
\begin{equation}\label{eq:normalize-R}
R_{J,I}(\la,1)\mapsto \widetilde R_{J,I}(\la,1):=\frac{Q^{-\frac{I(J+1)}{2}}}{\la^J(\lambda^{-2}Q^{-I-J};Q^2)_J} \,R_{J,I}(\la,1).
\end{equation}
Note that this does not destroy the second coefficient `1' in the right-hand side of \eqref{eq:fusion}. Further, let us assume that $Q$ is sufficiently close to 1, and $\la$ is sufficiently close to $s=Q^{-I}$; as the final formula \eqref{eq:w-via-R} easily admits analytic continuation from such a domain (it is even an identity of rational functions for fixed $\nu$ and $\mu$), this is not a serious restriction. This leads to the first coefficient in the right-hand side of \eqref{eq:fusion} being a small constant raised to the power $M$, and the fusion relation now makes sense in the $M\to \infty$ limit; it reads
$$
\widetilde T_{1,I}(\la)\widetilde T_{J,I}(\la Q^{-\frac{J+1}{2}})=
\widetilde T_{J+1,I}(\la Q^{-\frac{J}2}),
$$
or, iterating,
\begin{equation}\label{eq:transfer}
\widetilde T_{1,I}(Q^{\frac{-J+1}{2}}\la)\widetilde T_{1,I}(Q^{\frac{-J+3}{2}}\la)\cdots \widetilde T_{1,I}(Q^{\frac{J+1}{2}}\la)=\widetilde T_{J,I}(\la),
\end{equation}
where we use the tilde in $\widetilde T$ to signify both the $M\to \infty$ limit and the normalization \eqref{eq:normalize-R}. 

From the point of view of path interpretation of matrix elements of the transfer matrix, we started with the periodic boundary conditions in \eqref{eq:trace}, and by making $\la$ close to $s$ we made $[R_{1,I}(\la,1)]_{j_1=1,i_1=0}^{j_2=1,i_2=0}$ (which corresponds to $w_{v}(0,1;0,1)$) small, which in the limit $M\to\infty$ prevents paths from going around the infinite loop. 

It only remains to utilize Proposition \ref{pr:w-to-R} in the left-hand side, keeping in mind the normalization \eqref{eq:normalize-R} and the symmetry relation \cite[(5.10)]{Manga}.  
\end{proof}

Theorem \ref{th:skew-R} and results of \cite{Manga} allow us to write down an explicit formula for the principal specialization of the skew $G$-functions. For that we need additional notation. 

Following \cite{Manga}, we shall use the following extended notation for $q$-Pochhammer symbols:
$$
(x;q)_n=\begin{cases} (1-x)(1-qx)\cdots(1-q^{n-1}x),&n>0,\\
1,&n=0,\\
\left((1-q^nx)(1-q^{n+1}x)\cdots (1-q^{-1}x)\right)^{-1},&n<0,
\end{cases}
$$
and also regularized terminating basic hypergeometric series
\begin{multline*}
{}_{r+1}\bar{\phi}_r\left(\begin{matrix} q^{-n};a_1,\dots,a_r\\b_1,\dots,b_r\end{matrix}
\Bigl| q,z\right)=\sum_{k=0}^n z^k\,\frac{(q^{-n};q)_k}{(q;q)_k}	\prod_{i=1}^r (a_i;q)_k(b_iq^k;q)_{n-k}\\
=\prod_{i=1}^r (b_i;q)_n\cdot {}_{r+1}{\phi}_r\left(\begin{matrix} q^{-n};a_1,\dots,a_r\\b_1,\dots,b_r\end{matrix}
\Bigl| q,z\right) .
\end{multline*}

\begin{corollary}\label{cr:skew-G} For any $J\ge 1 $, $G_{\nu/\mu}(v,qv,\dots,q^{J-1}v)$ corresponds (as a function of $\nu$ and $\mu$, in the sense of Definition \ref{df:corresponds}) to the vertex weights given by 
\begin{multline}\label{eq:w-via-phi}
\widetilde w_v^{(J)}(i_1,j_1;i_2,j_2)\\=\frac{(-1)^{i_1+j_1} q^{\frac{i_1^2+i_2^2}4+\frac{i_1(j_1-1)+i_2j_2}{2}}s^{j_1-i_1}v^{i_1}(vs^{-1};q)_{j_1-i_2}}{(q;q)_{i_1} (vs;q)_{i_1+j_1}}\, {}_{4}\bar{\phi}_3\left(\begin{matrix} q^{-i_1};q^{-i_2},q^J sv,qsv^{-1}\\s^2,q^{1+j_1-i_2},q^{1+J-i_1-j_1}\end{matrix}
\Bigl|\, q,q\right)
\end{multline}
if $i_1+j_1=i_2+j_2$, and by 0 otherwise. 
\end{corollary}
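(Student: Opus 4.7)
The plan is to combine Theorem \ref{th:skew-R} with the explicit formula \cite[(5.8)-(5.9)]{Manga} for the higher spin $R$-matrix $R_{I,J}(\lambda;1)$, and then carry out a careful bookkeeping of the specialization $Q^2=q$, $s=Q^{-I}$, $\lambda^2=(Q^J v)^{-1}$ used in Theorem \ref{th:skew-R}. There is essentially no new input needed beyond the two ingredients: the combinatorial identification of the weights (already proved) and Mangazeev's closed form for the matrix elements.

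First I would write out $[R_{I,J}(\lambda;1)]_{i_1,j_1}^{i_2,j_2}$ from \cite[(5.8)-(5.9)]{Manga}. The nonvanishing condition $i_1+j_1=i_2+j_2$ arises automatically since arrow preservation is built into the $R$-matrix (cf. Remark \ref{rm:arrow_preserve}(i)), so the second case in \eqref{eq:w-via-phi} is immediate. When $i_1+j_1=i_2+j_2$, Mangazeev's formula produces a finite sum over an auxiliary index which, after pulling out the $k$-independent prefactor, has exactly the shape of a terminating ${}_4\phi_3$ series. Using the definition of the regularized ${}_{r+1}\bar{\phi}_r$ in terms of ${}_{r+1}\phi_r$ multiplied by $\prod_i (b_i;q)_n$, one can absorb the needed Pochhammer factors into the bar-notation and recognize the sum as the ${}_4\bar{\phi}_3$ appearing on the right-hand side of \eqref{eq:w-via-phi}.

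Next I would substitute into the formula \eqref{eq:w-via-R} from Theorem \ref{th:skew-R}. The prefactor $\lambda^{-J}(sv;q)_J^{-1}$ combines with the $\lambda^{J(i_1-i_2)}$-type factors hidden in Mangazeev's formula (via $\lambda^{2}=(Q^Jv)^{-1}$) to give the stated $v^{i_1}$, $s^{j_1-i_1}$, and $(sv;q)_{i_1+j_1}^{-1}$ prefactors. The $Q$-powers from the normalization in \eqref{eq:w-via-R} together with those from $R_{I,J}$ (which involve both $Q^{I\cdot(\,\cdot\,)}$ and independent $Q$-powers) collapse into the claimed $q^{\frac{i_1^2+i_2^2}{4}+\frac{i_1(j_1-1)+i_2 j_2}{2}}$ after using $Q^2=q$ and $Q^{-I}=s$; the $(-1)^{i_1+j_1}$ sign is produced by the $(-1)^{j_1}$ from \eqref{eq:w-via-R} together with an $(-1)^{i_1}$ extracted from the Mangazeev prefactor (written via a $Q^{-I\cdot i_1}\cdot s^{i_1}\cdot\cdots$ manipulation). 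Finally, the factor $(vs^{-1};q)_{j_1-i_2}$ comes out of the $\lambda^{-I}\lambda^{I}$-type combinations with the extended $q$-Pochhammer convention (which makes sense even when $j_1-i_2<0$, matching the case $j_2>i_1$).

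The statement is thus a direct algebraic consequence of the two cited results, and the main obstacle is strictly bookkeeping: keeping track of the $Q$- versus $q$-powers, of the signs, and of how the $\lambda^J$ normalization distributes across the various $q$-Pochhammer factors on either side. Because \eqref{eq:w-via-R} is an identity of rational functions in $v$ (for fixed $\nu,\mu$), the analytic restrictions on $Q$ and $\lambda$ used in the proof of Theorem \ref{th:skew-R} impose no loss of generality, and the resulting formula holds as an identity of rational functions.
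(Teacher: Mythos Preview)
Your proposal is correct and follows essentially the same route as the paper: substitute Mangazeev's explicit formula \cite[(5.8)-(5.9)]{Manga} for $R_{I,J}$ into the weights \eqref{eq:w-via-R} of Theorem \ref{th:skew-R} and carry out the bookkeeping. The one point you do not make explicit is that the tilde on $\widetilde w_v^{(J)}$ signals a further simplification: after the substitution one drops factors of the form $f(j_1)/f(j_2)$, which cancel telescopically in any product along a row and are therefore irrelevant in the sense of Definition \ref{df:corresponds} (cf.\ Remark \ref{rm:skew-R}(i)); this is what accounts for the discrepancy between \eqref{eq:w-via-R} and \eqref{eq:w-via-phi}.
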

\begin{proof} This statement is a mere substitution of \cite[(5.8)-(5.9)]{Manga} into Theorem \ref{th:skew-R} and subsequent removal of certain factors of the form $f(j_1)/f(j_2)$, cf. Remark \ref{rm:skew-R}(i). 
\end{proof}

Proposition \ref{pr:principal}(ii) (and hence (i) too via \eqref{eq:G-via-F}) can, in principle, be derived from Corollary \ref{cr:skew-G}, because for $G_{\nu/0^M}(v,qv,\dots,q^{J-1}v)$ only vertices with either $i_1=0$ or $j_1=0$ participate. For $i_1=0$, the ${}_4\bar{\phi}_3$ in \eqref{eq:w-via-phi} is simply equal to 1, and for $j_1=0$ only one term in the series expansion of ${}_4\bar{\phi}_3$ contributes and gives an elementary expression. Multiplying the weights over the row of vertices should yield \eqref{eq:principal-G}, but the computation is rather tedious.

Observe that the right-hand side of \eqref{eq:w-via-phi} is manifestly a polynomial in $q^J$, while the definition of $G_{\nu/\mu}(v,qv,\dots,q^{J-1}v)$ requires $J$ to be a positive integer. One might wonder if $G_{\nu/\mu}(v,qv,\dots,q^{J-1}v)$ can be analytically continued in $q^J$ in a natural way. One answer is provided by the following statement. 

\begin{corollary}\label{cr:cauchy-principal} For any $M\ge 0$, $\mu\in \s_M^+$, and $v,q^J\in \C$, \emph{define} $G_{\nu/\mu}(v,qv,\dots,q^{J-1}v)$ via Corollary \ref{cr:skew-G}. Then for any  $u_1,\dots,u_M\in \C$ in a small enough neighborhood of $s$, we have
\begin{equation}\label{eq:pieri-principal}
\prod_{i=1}^M\frac{1-q^J u_iv}{1-u_iv}\, F_\mu(u_1,\dots,u_M)=\sum_{\nu\in \s_{M}^+}\frac{c(\nu)}{c(\mu)}\, G_{\nu/\mu}(v,qv,\dots,q^{J-1}v)F_\nu(u_1,\dots,u_M),
\end{equation}
where the function $c(\,\cdot\,)$ is defined in \eqref{eq:def-c}. 
\end{corollary}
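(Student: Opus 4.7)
The plan is to establish \eqref{eq:pieri-principal} first for every positive integer $J$ by iterating the single-variable Pieri rule \eqref{eq:pieri-F}, and then extend to arbitrary $q^J\in\C$ by analytic continuation in the variable $q^J$.

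For positive integer $J$, start from $F_\mu(u_1,\dots,u_M)$ and apply Corollary \ref{cr:pieri}(i) repeatedly with arguments $v,qv,\dots,q^{J-1}v$. This produces
$$\prod_{j=0}^{J-1}\prod_{i=1}^M\frac{1-q\,u_i\,(q^jv)}{1-u_i\,(q^jv)}\cdot F_\mu(u_1,\dots,u_M)=\sum_{\nu^{(1)},\dots,\nu^{(J)}\in\s_M^+}\prod_{j=0}^{J-1}G^c_{\nu^{(j+1)}/\nu^{(j)}}(q^jv)\cdot F_{\nu^{(J)}}(u_1,\dots,u_M),$$
with $\nu^{(0)}=\mu$. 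The left side telescopes to $\prod_i(1-q^Ju_iv)/(1-u_iv)\cdot F_\mu$. On the right, the branching rule \eqref{eq:branching-G} transfers verbatim to $G^c$ since $c(\lambda)/c(\mu)=(c(\lambda)/c(\kappa))\cdot(c(\kappa)/c(\mu))$, so iterated branching collapses the inner sum over $\nu^{(1)},\dots,\nu^{(J-1)}$ to $G^c_{\nu^{(J)}/\mu}(v,qv,\dots,q^{J-1}v)$. This yields \eqref{eq:pieri-principal} with $G_{\nu/\mu}$ read in its combinatorial sense (Definition \ref{df:G}); Theorem \ref{th:skew-R} together with Corollary \ref{cr:skew-G} then identifies this combinatorial value, for positive integer $J$, with the hypergeometric formula used in the statement. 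The interchanges of sums at each Pieri step are legitimate because $u_i$ near $s$ makes $|(u_i-s)/(1-su_i)|$ arbitrarily small, providing absolute convergence.

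To extend to arbitrary $q^J\in\C$, promote $q^J$ to an independent complex variable $\alpha$, substituted for $q^J$ in the vertex-weight formula \eqref{eq:w-via-phi}. The LHS of \eqref{eq:pieri-principal} is then a polynomial in $\alpha$ of degree $M$, and each summand on the RHS is a polynomial in $\alpha$, because every $\widetilde w_v^{(J)}$ is. Fixing $|q|<1$ (which we may do without loss, as the final identity is rational in $q$), the values $\alpha=q,q^2,q^3,\dots$ form an infinite set accumulating at $0$, and the previous paragraph shows that the two sides agree at all of them. If the RHS defines an analytic function of $\alpha$ in a neighborhood of $0$, the identity theorem forces the two sides to coincide there, and hence for all $\alpha\in\C$.

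The main obstacle is precisely this analyticity: one must justify that the infinite sum $\sum_{\nu\in\s_M^+}\frac{c(\nu)}{c(\mu)}G_{\nu/\mu}(v,qv,\dots,q^{J-1}v)F_\nu(u_1,\dots,u_M)$ converges absolutely and uniformly on compacta of $\alpha$ near $0$. I would handle this by combining the exponential decay $|F_\nu(u_1,\dots,u_M)|\lesssim\prod_i|\xi_i|^{\nu_i}$ (up to a permutation) read off from the symmetrization formula \eqref{eq:symmetrization-F}, with $\xi_i=(u_i-s)/(1-su_i)$ made as small as desired by taking $u_i$ near $s$, against at most polynomial growth of $G^c_{\nu/\mu}(v,qv,\dots,q^{J-1}v)$ in the coordinates of $\nu$, the latter being visible directly from the terminating basic hypergeometric polynomials ${}_4\bar{\phi}_3$ appearing in the vertex weights of Corollary \ref{cr:skew-G}. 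Apart from this convergence/continuation estimate, the proof is a mechanical assembly of Pieri iteration, branching, and fusion already proved earlier in the paper.
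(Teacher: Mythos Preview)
Your approach is essentially the paper's: establish \eqref{eq:pieri-principal} for positive integer $J$ from the Pieri/skew Cauchy identity with $\la=\varnothing$, then analytically continue in the variable $q^J$ using the decay of $F_\nu$ against a growth bound on $G^c_{\nu/\mu}$.

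One inaccuracy to fix: $G^c_{\nu/\mu}(v,qv,\dots,q^{J-1}v)$ does \emph{not} grow merely polynomially in the coordinates of $\nu$; it grows at most like $const^{\nu_1}$. In the single-row path picture coming from Corollary \ref{cr:skew-G}, the weight $\widetilde w_v^{(J)}(0,j;0,j)=(-s)^j(vs^{-1};q)_j/(vs;q)_j$ is a fixed nonzero constant (independent of $q^J$), and a path reaching position $\nu_1$ contains $O(\nu_1)$ such vertices, so the product is genuinely exponential in $\nu_1$. This does not damage your argument, because by taking the $u_i$ close enough to $s$ one makes $|F_\nu(u_1,\dots,u_M)|$ decay faster than any prescribed geometric rate in $\nu_1$, which is exactly the bound the paper uses. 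The paper also sharpens your polynomiality observation: each $G_{\nu/\mu}(v,qv,\dots,q^{J-1}v)$ is a polynomial in $q^J$ of degree at most $M$ (the degree of $\widetilde w_v^{(J)}(i_1,j_1;i_2,j_2)$ in $q^J$ is $i_1$, and the $i_1$'s sum to $M$), so the uniformly convergent right-hand side is itself a polynomial in $q^J$ of degree $\le M$, and agreement at any $M+1$ values of $q^J$ already yields the identity for all $q^J\in\C$.
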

\noindent\textbf{Comments.} \textbf{(i)} One way to think about \eqref{eq:pieri-principal} is as of a decomposition of the left-hand side, viewed as a function of $\mu\in \s_M$, in the basis of functions $\{F_{\nu}(u_1,\dots,u_M)\}_{\nu\in \s_M}$ on $\s_M$. Because of certain orthogonality relations for $F_\nu$ that we describe in the next section, the coefficients in such an expansion can be effectively extracted via contour integrals. This provides an alternative expression for $G_{\nu/\mu}(v,qv,\dots,q^{J-1}v)$ that is manifestly polynomial in $q^J$, cf. Remark \ref{rm:spatial-check} below. Proving the equality between this expression and that of Corollary \ref{cr:cauchy-principal} directly seems to be challenging though. 

\smallskip

\noindent\textbf{(ii)} One can also view \eqref{eq:pieri-principal} as an eigenrelation for the `fused transfer-matrix' $$[G_{\nu/\mu}(v,qv,\dots,q^{J-1}v)]_{\nu,\mu\in\s_M}$$ and its eigenvector $\{c(\nu)F_{\nu}(u_1,\dots,u_M)\}_{\nu\in \s_M}$, cf. Remark \ref{rm:bethe}. 
The product in the left-hand side of \eqref{eq:pieri-principal} is the corresponding eigenvalue, and it is indeed an infinite volume limit of an eigenvalue of the fused transfer matrix of the higher spin XXZ model.

\noindent\textbf{(iii)} For $\mu=0^M$, $q^J=(vs)^{-1}$, Corollary \ref{cr:cauchy-principal} yields \cite[Proposition 5.18]{BCPS2}. Note that for such value of $q^J$, $\nu$ must have no nonzero coordinates because otherwise the factor $(sv;q)_{N+k}=(q^{-J};q)_{J+k}$ in the right-hand side of \eqref{eq:principal-G} vanishes, and then for $G_\nu(v,qv,\dots,q^{J-1}v)$ we can use \eqref{eq:G-via-F} and \eqref{eq:principal-F}. I am very grateful to Leonid Petrov for pointing this connection out.

\begin{proof}[Proof of Corollary \ref{cr:cauchy-principal}] We argue by analytic continuation of the skew Cauchy identity \eqref{eq:skew-cauchy} with $N=J$, $(v_1,\dots,v_N)=(v,qv,\dots,q^{J-1}v)$, $\lambda=\varnothing$, viewed as an identity between polynomials in $q^J$ for $q^J\in \{q,q^2,q^3,\dots\}$. 

Indeed, one readily sees from Corollary  \ref{cr:skew-G} that $G_{\nu/\mu}(v,qv,\dots,q^{J-1}v)$ is a polynomial in $q^J$ of degree at most $M$, and that its absolute value grows at most as $const^{\nu_1}$ and $\nu_1\to\infty$ as long as $q^J$ stays in a compact subset of $\C$. On the other hand, one sees from \eqref{eq:symmetrization-F} that by taking $u_1,\dots,u_M$ into a small enough neighborhood of $s$ one can achieve that $|F_\nu(u_1,\dots,u_M)|$ decays, as $\nu_1\to\infty$, faster than any (small) positive constant to the power $\nu_1$. Hence, the series in the right-hand side of \eqref{eq:pieri-principal} is uniformly convergent for bounded $q^J$ and $u_i$'s close to $s$, the sum remains a polynomial in $q^J$ of degree at most $M$, and it can be analytically continued off any $M+1$ distinct points. Since the skew Cauchy identity implies \eqref{eq:pieri-principal} for $q^J$ in the infinite set $\{q,q^2,\dots,\}$, the proof is complete.   
\end{proof}

While the expression in the right-hand side of \eqref{eq:w-via-phi} does not look too appetizing, it may simplify for special values of parameters. Here is an example of such a simplification at $v=s$. 

\begin{proposition}\label{pr:q-hahn}
For any $J\ge 1$, and by analytic continuation of Corollaries \ref{cr:skew-G}-\ref{cr:cauchy-principal}, for any $q^J\in\C$, $G_{\nu/\mu}(s,qs,\dots,q^{J-1}s)$ corresponds (as a function of $\nu$ and $\mu$, in the sense of Definition \ref{df:corresponds}) to the vertex weights given by 
\begin{equation}\label{eq:q-hahn}
{\widehat {w}}^{(J)}_s(s,qs,\dots,q^{J-1}s)=(-s)^{-j_1}(s^2q^J)^{j_1}\,\frac{(q^{-J};q)_{j_1}(s^2q^J;q)_{i_2-j_1}}{(s^2;q)_{j_2}}\,\frac{(q,q)_{i_2}}{(q,q)_{j_1}(q;q)_{i_2-j_1}}
\end{equation}
for $i_2\ge j_1$ and $i_1+j_1=i_2+j_2$, and by 0 otherwise. 
\end{proposition}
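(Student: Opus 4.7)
The plan is to substitute $v=s$ directly into the formula of Corollary \ref{cr:skew-G} and simplify the resulting basic hypergeometric expression to a product of $q$-Pochhammer symbols.

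Two ``coincidences'' at $v=s$ drive the simplification. In the regularized series ${}_4\bar{\phi}_3$ appearing in \eqref{eq:w-via-phi}, the upper parameter $qsv^{-1}$ becomes $q$, so the factor $(qsv^{-1};q)_k\big|_{v=s}=(q;q)_k$ cancels the $(q;q)_k$ in the denominator of each summand. At the same time, the prefactor contains $(vs^{-1};q)_{j_1-i_2}\big|_{v=s}=(1;q)_{j_1-i_2}$, which vanishes identically when $j_1>i_2$; this reproduces the restriction $i_2\ge j_1$ in the statement. Under the assumption $i_2\ge j_1$, the factor $(1;q)_{j_1-i_2}$ is (via the negative-index convention) a finite, nonzero rational expression.

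The surviving series is a terminating balanced ${}_4\phi_3$ at argument $q$ with one upper parameter equal to $q$: one checks balance by computing that the product of the three lower parameters $s^2\cdot q^{1+j_1-i_2}\cdot q^{1+J-i_1-j_1}$ equals $q$ times the product of the four upper parameters $q^{-i_1}\cdot q^{-i_2}\cdot q^Js^2\cdot q$. I would then apply a Sears ${}_4\phi_3$ transformation with a well-chosen anchor among the upper parameters to reduce this to a terminating balanced ${}_3\phi_2$, to which the $q$-Pfaff--Saalschütz summation applies in closed form. Multiplying the resulting product back into the prefactor of \eqref{eq:w-via-phi} and reorganizing via elementary identities such as $q^{nk}(q^{-n};q)_k=(-1)^k q^{\binom{k}{2}}(q^{n-k+1};q)_k$ and the reflection formula for negative-index Pochhammer symbols should reproduce \eqref{eq:q-hahn}.

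This settles the proposition for positive integer $J\ge 1$. The extension to arbitrary $q^J\in\C$ then follows by analytic continuation, since both sides of \eqref{eq:q-hahn} are manifestly polynomial in $q^J$ of bounded degree (and the analytically continued Corollaries \ref{cr:skew-G}--\ref{cr:cauchy-principal} treat $q^J$ as a formal parameter). The main technical obstacle lies not in the existence of a proof but in the detailed bookkeeping: choosing the right Sears transformation so that a $q$-Saalschütz--summable form emerges, and tracking the signs, powers of $s$ and $q$, and reflected negative-index Pochhammer symbols carefully enough that the result matches \eqref{eq:q-hahn} factor by factor. An alternative, more indirect route would be to use Corollary \ref{cr:cauchy-principal} at $v=s$ as an eigenrelation and identify the weights by testing against enough eigenfunctions $F_\mu$, but the direct hypergeometric simplification seems more transparent and self-contained.
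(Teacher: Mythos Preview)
Your approach is essentially the paper's: substitute $v=s$ into Corollary~\ref{cr:skew-G} and collapse the ${}_4\bar\phi_3$ via a Sears-type transformation, then tidy up the Pochhammer debris. The paper's execution is a little more direct than your sketch. It applies the specific transformation \cite[(B.3)]{Manga} with the anchor $b$ equal to the upper parameter $qsv^{-1}\big|_{v=s}=q$ that you already singled out; on the transformed side that parameter becomes $q/b=1$, so the new ${}_4\bar\phi_3$ collapses to its $k=0$ term $(D,E,F;q)_N$ and no subsequent $q$-Saalsch\"utz step is needed. Your intermediate claim that the $(q;q)_k$ cancellation leaves a genuine ${}_3\phi_2$ is not quite right as stated (the remaining sum has three denominator Pochhammers but no $(q;q)_k$, so it is not in standard ${}_3\phi_2$ form), but this is a detour rather than an error: the same Sears move fixes it. Your identification of $(1;q)_{j_1-i_2}$ as the source of the constraint $i_2\ge j_1$, the balance check, and the analytic continuation in $q^J$ all match the paper; note also that the final match with \eqref{eq:q-hahn} requires discarding harmless factors of the form $f(j_1)/f(j_2)$, cf.\ Remark~\ref{rm:skew-R}(i).
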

\begin{remark}\label{rm:q-hahn} \textbf{(i)} The tilde and hat over `$w$' in the left-hand sides of \eqref{eq:w-via-phi} and \eqref{eq:q-hahn} symbolize that the expressions on the right-hand sides are different from \eqref{eq:w-via-R} by (irrelevant) factors of the form $f(j_1)/f(j_2)$, cf. Remark \ref{rm:skew-R}(i), that we remove to make the resulting expressions simpler. 

\smallskip 

\noindent\textbf{(ii)} The condition $i_2\ge j_1$ or, equivalently (modulo the default condition $i_1+j_1=i_2+j_2$), $i_1\ge j_2$, pictorially means that the number of paths that exit any vertex on the top is at least as large as the number of paths that enter the vertex from the left. This could be thought of as the condition that the up-right paths are not allowed to move horizontally by more than one unit. This restriction can also be seen in a different way: At $v=s$, the eigenvalue in the left-hand side of \eqref{eq:pieri-principal} has the form
$$
\prod_{i=1}^M \frac{1-q^Ju_is}{1-u_is}=\prod_{i=1}^M\left(\frac{1-q^Js^2}{1-s^2}+\frac{(q^J-1)s}{1-s^2}\cdot\frac{s-u_i}{1-u_is}\right),
$$
and the important part is that the factors are linear functions in $\xi_i=(s-u_i)/(1-u_is)$. When one multiplies such an expression by \eqref{eq:symmetrization-F}, it is natural to expect that powers $\nu_i$ of $\xi_j$'s increase by no more than one. 

\smallskip

\noindent\textbf{(iii)} The right-hand side of \eqref{eq:q-hahn} coincides, up to the factor $(-s)^{j_1}$, with the jumping probabilities \cite[(8)]{Povolotsky} with the parameters $(\mu,\nu)$ of \cite{Povolotsky} related to ours via $\mu=s^2 q^J$, $\nu=s^2$ (I apologize again for the overloading of Greek letters). Further, Corollary \ref{cr:cauchy-principal} coincides with the eigenrelation first proved in \cite{Povolotsky}, and also re-stated and re-proved as \cite[Proposition 5.13]{BCPS2}. 

The extra factor $(-s)^{j_1}$ is explained by a change of variables in the eigenfunctions: If we replace our variables $u_i$ in $F_\nu(u_1,\dots,u_M)$ by $sz_i$, $1\le i\le M$, then
$$
\frac{u_i-s}{1-su_i}=(-s)\frac{1-z_i}{1-s^2z_i}, \qquad 1\le z_i\le M,
$$
and when we raise this expression to power $\nu_{\sigma(i)}$ and take the product over $i$ as in \eqref{eq:symmetrization-F}, we obtain the extra factor of $(-s)^{\sum_i \nu_i}$. On the other hand, extra $(-s)^{j_1}$ in the weight of any vertex of type $(i_1,j_1;i_2,j_2)$
leads to the multiplication of $G_{\nu/\mu}$ corresponding to these vertex weights by $(-s)^{\sum_i(\mu_i-\nu_i)}$. We thus see that all these powers of $(-s)$ cancel out in the  eigenrelation \eqref{eq:pieri-principal}.
\end{remark}
\begin{proof}[Proof of Proposition \ref{pr:q-hahn}] In principle, we simply need to substitute $v=s$ into the right-hand side of \eqref{eq:w-via-phi}, but what we literally read is not very illuminating. One way to proceed is to apply the transformation formula \cite[(B.3)]{Manga} that reads
$$
{}_{4}\bar{\phi}_3\left(\begin{matrix} q^{-m};a,b,c\\q^{1-m+n},e,f\end{matrix}
\Bigl|\, q,q\right)=\frac{(-1)^{m+n}(ab)^n(a,b,c;q)_{m-n}}{q^{n+\frac{(m-n)(m-n-1)}{2}}}
{}_{4}\bar{\phi}_3\left(\begin{matrix} q^{-n};\frac qa,\frac qb,cq^{m-n}\\q^{1-n+m},\frac{qe}{ab},\frac{qf}{ab}\end{matrix}\Bigl|\, q,q\right)
$$
with $m,n\in\Z_{\ge 0}$ and $abc=ef q^n$ (we used the abbreviated notation $(a,b,c;q)_l=(a;q)_l(b;q)_l(c;q)_l$ here). Setting $v=s$, choosing
$$
m=i_1,\ n=j_2,\ a=s^2q^J,\ b=q,\ c=q^{-i_2},\ e=s^2,\ f=q^{1+J-i_2-j_2},
$$
noting that $q/b=1$ and 
$$
{}_{4}\bar{\phi}_3\left(\begin{matrix} q^{-N};A,1,C\\D,E,F\end{matrix}
\Bigl|\, q,z\right)=(D,E,F;q)_N,
$$
we rewrite the right-hand side of \eqref{eq:w-via-phi} with $v=s$, remembering that $i_1+j_1=i_2+j_2$, as
\begin{gather*}
\frac{(-1)^{i_1+j_1} q^{\frac{i_1^2+i_2^2}4+\frac{i_1(j_1-1)+i_2j_2}{2}}s^{j_1}(1;q)_{j_1-i_2}}{(q;q)_{i_1} (s^2;q)_{i_2+j_2}}\\
\times
\frac{(-1)^{i_1+j_2}(s^2q^{J+1})^{j_2}(s^2q^J,q,q^{-i_2};q)_{i_2-j_1}}{q^{j_2+\frac{(i_1-j_2)(i_1-j_2-1)}{2}}}\cdot(q^{1+i_1-j_2},q^{-J},q^{1-i_2-j_2}s^{-2};q)_{j_2}.
\end{gather*}
A few factors can now be simplified:
\begin{gather*}
(1;q)_{j_1-i_2}=\mathbf{1}_{i_2\ge j_1} \cdot (-1)^{i_2+j_1}\, \frac{q^\frac{(i_2-j_1)(i_2-j_1+1)}{2}}{(q;q)_{i_2-j_1}}\,;\\
\frac{(q^{1-i_2-j_2}s^{-2};q)_{j_2}}{(s^2;q)_{i_2+j_2}}=\frac{(-1)^{j_2}s^{-2j_2}q^{-\frac{(2i_2+j_2-1)j_2}{2}}}{(s^2;q)_{i_2}}\,;\\
\frac{(q^{-i_2};q)_{i_2-j_1}(q^{1+i_1-j_2};q)_{j_2}}{(q;q)_{i_1}}=\frac{(-1)^{i_2+j_1}q^{-\frac{(i_2+j_1+1)(i_2-j_1)}{2}}(q;q)_{i_2}}{(q;q)_{j_1}(q;q)_{i_2-j_1}}\,.
\end{gather*}
Collecting powers of $(-1)$ gives $(-1)^{j_1}$. Remembering that we can multiply vertex weights by $f(j_1)/f(j_2)$, cf. Remarks \ref{rm:skew-R}(i) and \ref{rm:q-hahn}(i), we can replace
$$
(s^2q^J)^{j_2}(q^{-J};q)_{j_2}\ \text{  by  }\  (s^2q^J)^{j_1} (q^{-J};q)_{j_1} \qquad \text{and}\qquad  s^{j_1}s^{-2j_2}\text{  by  } s^{-j_1}.
$$
It remains to collect the powers of $q$. We read
$$
\frac{i_1^2+i_2^2}{4}+\frac{i_1(j_1-1)+i_2j_2}{2}+\frac{(i_2-j_1)(i_2-j_1+1)}{2}-\frac{(2i_2+j_2-1)j_2}{2}-\frac{(i_2+j_1+1)(i_2-j_1)}{2}.
$$
Substituting $i_1=i_2+j_2-j_1$ into this expression yields $\frac14(j_1^2-j_2^2)$, which can be removed from the vertex weight, because $q$ raised to that power has the form $f(j_1)/f(j_2)$. Gathering remaining factors leads to the desired expression.
\end{proof}

\section{Orthogonality relations}\label{sc:orthogonality} The functions $F_\nu(u_1,\dots,u_n)$ satisfy two types of (bi)orthogonality relations, both were proved in \cite{BCPS2} (one of the relations had been previously conjectured in \cite{Povolotsky}), and we shall restate them below. The goal of the section is to explain how results of the previous sections connect to these orthogonality relations. 

We shall show that the Cauchy identity of Corollary \ref{cr:cauchy} essentially implies one of them, the so-called \emph{spectral biorthogonality}. We shall also show that the second one, the so-called \emph{spatial biorthogonality}, can be viewed as a link between the two symmetrization formulas of Theorem \ref{th:symmetrization}. This theorem can thus be used to give a proof of the spatial biorthogonality, but we stop short of doing that because the proof given in \cite{BCPS2} is shorter and more direct. On the other hand, one can also say that the spatial biorthogonality provides a route of \emph{deriving} (rather than verifying) the symmetrization formula for $G_\nu$ given the simpler, Bethe ansatz type symmetrization formula for $F_\mu$, and this is indeed how the formula for $G_\nu$ was obtained for the first time. 

The orthogonality relations can be complemented with \emph{completeness} of the corresponding functional bases, thus providing Plan\-che\-rel type isomorphism theorems between suitable functional spaces. Such theorems were the primary goal of \cite{BCPS2}, where an interested reader can find their statements and proofs. 

\begin{theorem}[Spectral orthogonality; Theorems 4.3, 6.7 of \cite{BCPS2}]\label{th:spectral} For any $n\ge 1$, we have
\begin{multline}
\label{eq:spectral-1}\sum_{\nu\in\s_n}\oint\cdots\oint\prod_{i=1}^n\frac{du_i}{2\pi\i} \oint\cdots\oint\prod_{i=1}^n\frac{dv_i}{2\pi\i}\sum_{\nu\in\s_n} c(\nu)F_{\nu}(u_1,\dots,u_n)F_\nu(v_1^{-1},\dots, v_n^{-1})\\ \times \prod_{1\le i<j\le n}(u_i-u_j)(v_i-v_j)\varphi(u_1,\dots,u_n)\psi(v_1,\dots,v_n)\\ 
=(-1)^{\frac{n(n-1)}{2}}\oint\cdots\oint\prod_{i=1}^n\frac{du_i}{2\pi\i}\prod_{1\le i, j\le n} (u_i-qu_j)\sum_{\sigma\in S_n}\varphi(u_1,\dots,u_n)\psi(u_{\sigma(1)},\dots,u_{\sigma(n)}),
\end{multline}
where $u_i$'s and $v_j$'s are integrated over positively oriented circles $|u_i-s|=|v_i-s|=\epsilon\ll 1$, $\varphi,\psi$ are suitable test functions, and $c(\,\cdot\,)$ is given by \eqref{eq:def-c}. Less formally, the above relation can be rewritten as
\begin{equation}
\begin{gathered}\label{eq:spectral-2}
\prod_{1\le i<j\le n}(u_i-u_j)(v_i-v_j)\sum_{\nu\in\s_n} c(\nu)F_{\nu}(u_1,\dots,u_n)F_\nu(v_1^{-1},\dots, v_n^{-1}) \\
=(-1)^{\frac{n(n-1)}{2}}\prod_{1\le i, j\le n} (u_i-qu_j)\cdot\det\left[\delta(v_i-u_j^{-1})\right]_{i,j=1}^n. 
\end{gathered}
\end{equation}
\end{theorem}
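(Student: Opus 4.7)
The plan is to deduce spectral biorthogonality from the Cauchy identity of Corollary \ref{cr:cauchy}, as announced at the start of the section. Specialize \eqref{eq:cauchy} to $M=N=n$, and rewrite $G^c_\nu(v_1,\dots,v_n)$ in terms of $F_\nu(v_1^{-1},\dots,v_n^{-1})$. Comparing the symmetrization formula \eqref{eq:symmetrization-F} under the substitution $u_i\mapsto v_i^{-1}$ against \eqref{eq:symmetrization-G} (together with Definition \ref{df:conjugated} of $c(\nu)$), one verifies that, at least for $\nu\in\s_n^+$ having no zero parts,
\[
G^c_\nu(v_1,\dots,v_n)=c(\nu)\cdot J(v_1,\dots,v_n)\cdot F_\nu(v_1^{-1},\dots,v_n^{-1}),
\]
where $J$ is an explicit $\nu$-independent rational prefactor coming from the Jacobian factors $\prod_i v_i/((1-sv_i)(v_i-s))$ in \eqref{eq:symmetrization-G} paired against the factor $\prod_i(1-sv_i^{-1})^{-1}$ from \eqref{eq:symmetrization-F}. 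The boundary cases where some $\nu_i=0$ need a separate check, using the way \eqref{eq:symmetrization-G} isolates the block of zero coordinates. Substituting converts the left-hand side of \eqref{eq:cauchy} into $\sum_\nu c(\nu)F_\nu(u)F_\nu(v^{-1})$ times $J(v)$.

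Next, multiply both sides of the resulting identity by $\prod_{i<j}(u_i-u_j)(v_i-v_j)\cdot\varphi(u)\psi(v)/J(v)$ and integrate each $v_j$ over the small circle $|v_j-s|=\eps$. On the left, provided the $u_i$ lie on a circle of the same radius, the series in $\nu$ converges uniformly in $v$ on the contours thanks to the geometric bound coming from $\prod_i\bigl|(u_i-s)/(1-su_i)\bigr|^{\nu_i}\cdot\bigl|(1-sv_i)/(v_i-s)\bigr|^{\nu_i}\ll 1$, so summation and integration commute and one recovers the LHS of \eqref{eq:spectral-1}. On the right, the only singularities of $\prod_{i,j}(1-u_iv_j)^{-1}$ inside the $v$-contours are the simple poles at $v_j=u_{\sigma(j)}^{-1}$ indexed by permutations $\sigma\in S_n$; extracting the residues one $v$-variable at a time produces a sum over $\sigma$ of $\varphi(u)\psi(u_{\sigma(1)}^{-1},\dots,u_{\sigma(n)}^{-1})$ times an explicit product of residue factors.

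What remains is a purely algebraic identity: the product of residue factors, combined with the Jacobian $J$ evaluated at $v_j=u_{\sigma(j)}^{-1}$ and with the Vandermonde $\prod_{i<j}(v_i-v_j)$, must collapse to $(-1)^{n(n-1)/2}\prod_{1\le i,j\le n}(u_i-qu_j)$ as in \eqref{eq:spectral-1}. This can be verified either by induction on $n$ using partial fractions in one $v$-variable at a time, or by recognizing the resulting expression as a Cauchy-determinant-type evaluation. The main obstacle is this last bookkeeping step: aligning signs, the normalization constants from Corollary \ref{cr:cauchy} and from $J$, and the boundary contributions of signatures $\nu$ with zero coordinates, so that the residue sum telescopes precisely to the claimed right-hand side. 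The analytic content (uniform convergence, deformation of contours) is routine once the small-contour regime is in place; it is the combinatorial collapse at the final step that carries the real content of the theorem.
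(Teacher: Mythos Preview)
There is a genuine gap at the very first step. The relation you assert,
\[
G^c_\nu(v_1,\dots,v_n)=c(\nu)\cdot J(v_1,\dots,v_n)\cdot F_\nu(v_1^{-1},\dots,v_n^{-1}),
\]
is false: compare the $\nu$-dependent factors in the two symmetrization formulas. In \eqref{eq:symmetrization-G} the power is $\bigl(\frac{v_i-s}{1-sv_i}\bigr)^{\nu_i}$, while substituting $u_i\mapsto v_i^{-1}$ in \eqref{eq:symmetrization-F} produces the reciprocal $\bigl(\frac{1-sv_i}{v_i-s}\bigr)^{\nu_i}$; no $\nu$-independent $J$ can absorb this. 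The correct relation, \eqref{eq:G-via-F}, links $G_\nu(v)$ to $F_\nu(v)$ at the \emph{same} arguments. The paper therefore first substitutes $v_i\mapsto v_i^{-1}$ in the Cauchy identity, so that $G^c_\nu(v_1^{-1},\dots,v_n^{-1})$ appears and can be rewritten as a multiple of $F_\nu(v_1^{-1},\dots,v_n^{-1})$. This substitution also moves the poles of the Cauchy product to $v_i=u_j$, which \emph{are} inside the small circles around $s$; in your setup the poles sit at $v_j=u_i^{-1}\approx s^{-1}$, outside the contours, and your residue extraction collapses to zero.

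A second, independent gap: the Cauchy identity sums over $\nu\in\s_n^+$, whereas the theorem requires the full $\s_n$, and the relation \eqref{eq:G-via-F} fails precisely when $\nu_n=0$. Your ``separate check'' cannot repair this, because the formula for $G_\nu$ with zero parts has a genuinely different shape (the $\frac{1-q^ksv_j}{1-sv_j}$ block in \eqref{eq:symmetrization-G}). The paper handles both issues with one device: it multiplies by $\prod_i\zeta_i^M/\xi_i^M$ for $M\gg 1$, which (i) kills the $\nu_n=0$ terms after integration against Laurent-polynomial test functions, and (ii) via the shift property of Remark \ref{rm:symmetrization}(i) turns $\sum_{\nu_n\ge 1}$ into $\sum_{\nu_n\ge -M+1}$, yielding the full sum over $\s_n$ as $M\to\infty$. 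This shift trick is the real analytic content of the argument, and it is missing from your outline.
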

 The statement of Theorem \ref{th:spectral} is somewhat sloppy with not defining exactly the class of test functions for which \eqref{eq:spectral-1} holds. One possible class is described in \cite{BCPS2}, another one will come out of our proof of Theorem \ref{th:spectral} below. Neither of them is optimal, and since our goals are mostly algebraic here, we do not pursue this issue further. 

\begin{theorem}[Spatial orthogonality; Corollary 3.13 of \cite{BCPS2}]\label{th:spatial} For any $n\ge 1$, $\mu,\nu\in\s_n$, we have
\begin{equation}\label{eq:spatial}
\frac{c(\nu)}{(q-1)^nn!} \oint\cdots\oint\prod_{i=1}^n\frac{du_i}{2\pi\i u_i}\prod_{1\le i\ne j\le n}\frac{u_i-u_j}{u_i-qu_j}\,F_\nu(u_1,\dots,u_n)F_\mu(u_1^{-1},\dots,u_n^{-1})=\mathbf 1_{\mu=\nu},
\end{equation}
where $u_i$'s are integrated over a positively oriented contour that contains points $\{s,qs,\dots, q^{n-1}s\}$ and its own image under multiplication by $q$, and that does not contain $s^{-1}$. 
\end{theorem}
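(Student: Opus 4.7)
My plan is to collapse the $n!$-fold symmetrization of $F_\nu$ against the $S_n$-symmetric measure and then evaluate the remaining integral by one-dimensional residues, after a second application of \eqref{eq:symmetrization-F} to $F_\mu(u^{-1})$.

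The first step is to substitute \eqref{eq:symmetrization-F} into $F_\nu(u_1,\dots,u_n)$. The integration measure $\prod_{i\ne j}\frac{u_i-u_j}{u_i-qu_j}\prod_i\frac{du_i}{u_i}$ is invariant under $S_n$, and by Theorem \ref{th:symmetry} so is $F_\mu(u_1^{-1},\dots,u_n^{-1})$. After relabeling integration variables, the $n!$ summands of the symmetrization all give the same integral and cancel the $1/n!$ prefactor. Using $\prod_{i\ne j}\frac{u_i-u_j}{u_i-qu_j}\cdot\prod_{i<j}\frac{u_i-qu_j}{u_i-u_j}=\prod_{i<j}\frac{u_j-u_i}{u_j-qu_i}$, the LHS becomes (with $\xi_i=(u_i-s)/(1-su_i)$, up to a sign convention)
\[
c(\nu)\oint\!\cdots\!\oint\prod_{i=1}^n\frac{du_i}{2\pi\i\,u_i(1-su_i)}\prod_{i<j}\frac{u_j-u_i}{u_j-qu_i}\prod_{i=1}^n\xi_i^{\nu_i}\,F_\mu(u_1^{-1},\dots,u_n^{-1}).
\]

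Next I would expand $F_\mu(u^{-1})$ by \eqref{eq:symmetrization-F}, using $\xi(u_i^{-1})=\xi_i^{-1}$ and $(1-su_i^{-1})^{-1}=u_i/(u_i-s)$ to absorb the prefactor into the measure $\prod_idu_i/((u_i-s)(1-su_i))$. For each $\tau\in S_n$ in the resulting sum, the change of variables $u_i\mapsto u_{\tau^{-1}(i)}$ (admissible since all $u_i$ share one contour) leaves this symmetric measure invariant but moves $\tau$ onto the asymmetric factor $R(u):=\prod_{i<j}(u_j-u_i)/(u_j-qu_i)$. For $\tau=\mathrm{id}$ the factor $R$ cancels against its inverse from the symmetrization of $F_\mu(u^{-1})$, and the integral decouples into
\[
\prod_{i=1}^n\oint\frac{du_i\,\xi_i^{\nu_i-\mu_i}}{2\pi\i\,(u_i-s)(1-su_i)}=\prod_{i=1}^n\frac{\mathbf{1}_{\nu_i=\mu_i}}{1-s^2};
\]
indeed, the integrand equals $(u_i-s)^{\nu_i-\mu_i-1}(1-su_i)^{-(\nu_i-\mu_i+1)}$, the only candidate pole inside the contour is at $u_i=s$ (because $s^{-1}$ is excluded and the integrand decays like $u_i^{-2}$ at infinity), and a direct check shows the residue vanishes unless $\nu_i=\mu_i$ (for negative exponents the polynomial $(1-su_i)^{m-1}$ produced in the numerator has degree too low to contribute to the residue of a pole of order $m+1$).

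For non-identity $\tau$, the twist $\rho_\tau(u)=\tau^{-1}[R]/R$ generates poles at $u_k=qu_j$ for $j<k$ that the contour's $q$-stability forces inside; computing these by iterated residues organizes the contributions by sequences of choices matching parts of $\nu$ to parts of $\mu$. At each leaf of the residue tree the variables are specialized to a subsequence of the geometric progression $\{s,qs,\dots,q^{n-1}s\}$, and the principal specialization \eqref{eq:principal-F} then forces the multisets of parts of $\mu$ and $\nu$ to coincide for a nonzero contribution. When $\mu=\nu$, applying \eqref{eq:symm-identity} within each cluster of equal parts of $\nu$ combines with the definition $c(\nu)=\prod_k(s^2;q)_{n_k}/(q;q)_{n_k}$ to cancel the $(1-s^2)^n$ from the one-dimensional residues and yield $\mathbf{1}_{\mu=\nu}$.

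The main obstacle is the non-identity $\tau$ analysis: verifying that only the matchings preserving the multisets of parts survive the iterated residues, and that the resulting combinatorial sum (weighted by the $q$-Vandermonde residues of $\rho_\tau$) balances exactly against $c(\nu)$. For signatures with clusters of equal parts the stabilizer of $\nu$ in $S_n$ must be summed over, and the normalization \eqref{eq:def-c} is tailored so that this sum collapses via \eqref{eq:symm-identity}; carrying out this matching for all cluster configurations is the combinatorial heart of the argument.
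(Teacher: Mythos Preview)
The paper does not actually prove Theorem~\ref{th:spatial}; it is imported verbatim from \cite{BCPS2}. The only indigenous argument the paper offers is the \emph{indirect} one of Remark~\ref{rm:spatial-check}(i): apply the would-be orthogonality to the Cauchy identity, obtain Proposition~\ref{pr:spatial-check}, match the answer with the already-proved symmetrization formula~\eqref{eq:symmetrization-G'} for $G_\nu$, and then conclude by linear independence of the $F_\nu$'s and richness of the Cauchy kernels. Your proposal is a genuinely different, \emph{direct} residue computation after de-symmetrizing both $F_\nu$ and $F_\mu$. The first two moves (collapsing the $n!$ against the symmetric measure, and decoupling the $\tau=\mathrm{id}$ contribution into one-dimensional integrals) are correct and are in fact exactly the manipulation the paper uses to reach~\eqref{eq:spatial-desymm} in the proof of Proposition~\ref{pr:spatial-check}.

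Where your plan becomes problematic is the non-identity $\tau$ part. You write that at the leaves of the residue tree ``the principal specialization~\eqref{eq:principal-F} then forces the multisets of parts of $\mu$ and $\nu$ to coincide'', but by that point you have already expanded $F_\mu(u^{-1})$ via~\eqref{eq:symmetrization-F} --- there is no $F_\mu$ left to which the principal specialization could be applied. This is not a minor slip: it signals that two distinct strategies are being conflated (expand only $F_\nu$ and let the residues specialize the intact $F_\mu$, versus expand both and analyze purely rational residues). Either route can in principle be pushed through, but they require different bookkeeping, and your sketch commits to the second while invoking the mechanism of the first. Moreover, the $\tau=\mathrm{id}$ term by itself does \emph{not} reproduce $\mathbf 1_{\mu=\nu}$ even for $\nu$ with distinct parts: a quick check at $n=1$ shows that the single term already carries a sign $(-1)^n$ relative to what you claim, so the $\tau\ne\mathrm{id}$ contributions are essential for the correct normalization in every case, not merely for clustered $\nu$. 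Until the iterated-residue analysis of $\rho_\tau$ is actually carried out and shown to produce the factors $(q;q)_{n_k}/(s^2;q)_{n_k}$ matching $c(\nu)^{-1}$, the argument remains a plan rather than a proof.
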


Let us proceed to showing how the Cauchy identity implies spectral orthogonality. The argument is similar to the proof of \cite[Proposition 6.10]{BCPS1}, where it was used for the Hall-Littlewood symmetric polynomials. 
\begin{proof}[Proof of Theorem \ref{th:spectral}] We start with the Cauchy identity \eqref{eq:cauchy} with $M=N=n$, substitute $v_i\mapsto v_i^{-1}$, and rewrite it using \eqref{eq:G-via-F}:
\begin{multline}\label{eq:cauchy-rewrite}
\frac{\prod_{i=1}^n(1-su_i)}{(q;q)_n}\sum_{\nu:\,\nu_n=0} F_\nu(u_1,\dots,u_n) G_\nu^c(v_1^{-1},\dots, v_n^{-1})\\+
{\prod_{i=1}^n(1-su_i)(1-sv_i)}{}\sum_{\nu:\nu_n\ge 1}{c(\nu)} F_\nu(u_1,\dots,u_n)F_\nu(v_1^{-1},\dots,v_n^{-1})=\prod_{i,j=1}^n \frac{v_i-qu_j}{v_i-u_j}\,.
\end{multline}

It is convenient to use the notation
$$
\xi_i=\frac{u_i-s}{1-su_i},\quad u_i=\frac{\xi_i+s}{1+s\xi_i},\qquad \zeta_i=\frac{v_i-s}{1-sv_i},\quad v_i=\frac{\zeta_i+s}{1+s\zeta_i},\qquad 1\le i\le n. 
$$
Let us multiply both sides of \eqref{eq:cauchy-rewrite} by
\begin{equation}\label{eq:multiply-by}
\prod_{i=1}^n \frac{\zeta_i^M}{\xi_i^M}\prod_{1\le i<j\le n}(u_i-u_j)(v_i-v_j)\cdot  \varphi(\xi_1,\dots,\xi_n)\varphi(\zeta_1,\dots,\zeta_n),
\end{equation}
where $M$ is a large positive integer, and $\varphi(\xi_1,\dots,\xi_n),\psi(\zeta_1,\dots,\zeta_n)$ are test functions such that 
$$
\prod_{i=1}^n(1+s\xi_i)^{-n-1}\varphi(\xi_1,\dots,\xi_n)\in\C[\xi^{\pm 1},\dots,\xi_n^{\pm 1}],
$$ 
and $\psi(\zeta_1,\dots,\zeta_n)$ is analytic in a punctured neighborhood of the origin with a possible finite order pole at the origin (as was noted above, these conditions on test functions are not optimal can be substantially relaxed, but we do not pursue that here). Let us further integrate over $|u_i-s|=\epsilon\ll 1$, $|v_i-s|=2\epsilon$. 

Rewriting the $u$-integrals in terms of $\xi$-variables, we observe that our assumptions and \eqref{eq:symmetrization-F} guarantee that in each term of the left-hand side of \eqref{eq:cauchy-rewrite} we are integrating a Laurent polynomial in $\xi_i$'s. Furthermore, in the first sum with $\nu_n=0$, each term will have at least one $\xi_i$ raised to only very negative degrees --- this follows from the symmetrization identity \eqref{eq:symmetrization-F} and the presence of $\prod_i \xi_i^{-M}$ with $M\gg 1$. This implies that the sum with $\nu_n=0$ vanishes after the $u$-integration. 

To deal with the sum with $\nu_n\ge 1$ we observe that, cf. Remark \ref{rm:symmetrization}(i),
\begin{equation}
\prod_{i=1}^n\frac{\zeta_i^M}{\xi_i^M}\sum_{\nu:\nu_n\ge 1} c(\nu)F_\nu(u_1,\dots,u_n)F_\nu(v_1^{-1},\dots,v_n^{-1})=\sum_{\nu:\nu_n\ge -M+1} c(\nu)F_\nu(u_1,\dots,u_n)F_\nu(v_1^{-1},\dots,v_n^{-1}).
\end{equation}
In the limit $M\to \infty$ we thus obtain the sum over all $\nu\in\s_n$. (With our test functions, this limit is actually a stabilization as terms with small or large enough $\nu_j$'s give zero contribution by the same reasoning that we used to remove the part with $\nu_n=0$ above.)

It remains to understand what the integral of the right-hand side of \eqref{eq:cauchy-rewrite} multiplied by \eqref{eq:multiply-by} gives. For that we shrink the $v$-contours to $s$. Due to $\prod_i \zeta_i^M$, $M\gg 1$, there is no singularity at $v_i=s$ for any $i$, $1\le i\le n$. Hence, for a nonzero contribution one needs to pick the residues at the first order poles $v_i=u_j$. Two different $v_i$'s cannot utilize that same $u_j$ because of the $\prod_{i<j}(v_i-v_j)$ factor in \eqref{eq:multiply-by}. Therefore, we end up with the sum 
$$
\sum_{\sigma\in S_n} \mathrm{Res}_{v_i=u_{\sigma(i)}}(\cdots),
$$
and this yields the right-hand side of \eqref{eq:spectral-1}. 
\end{proof}

Let us proceed to the spatial orthogonality \eqref{eq:spatial}. We are going to test it on the Cauchy identity \eqref{eq:cauchy}. More exactly, this identity provides a decomposition of any function of $u_1,\dots,u_M$ that has the form as in the right-hand side of \eqref{eq:cauchy} divided by the prefactor of the left-hand side, on $\{F_\nu(u_1,\dots,u_M)\}_{\nu\in s_M^+}$. We are going to extract the coefficients in such decomposition using \eqref{eq:spatial} and see that they are given by \eqref{eq:symmetrization-G'}:

\begin{proposition}\label{pr:spatial-check} For any $n,N\ge 0$, $\nu\in\s_n^+$, $v_1,\dots, v_N\in \C$ in a sufficiently small neighborhood of $s$, the expression
\begin{equation}\label{eq:spatial-check}
\frac{(q;q)_nc(0^n)}{(q-1)^nn!} \oint\cdots\oint\prod_{i=1}^n\frac{du_i}{2\pi\i u_i}\prod_{1\le i\ne j\le n}\frac{u_i-u_j}{u_i-qu_j}
\prod_{1\le i\le n}\left(\frac{1}{1-su_i}\prod_{\substack{1\le j\le N}}\frac{1-qu_iv_j}{1-u_iv_j}\right)  F_\nu(u_1^{-1},\dots,u_n^{-1})
\end{equation}
with integration contours as in Theorem \ref{th:spatial},
coincides with the right-hand side of \eqref{eq:symmetrization-G'} with $k$ being the number of zero coordinates in $\nu$. 
\end{proposition}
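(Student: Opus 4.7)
The plan is to substitute the symmetrization formula \eqref{eq:symmetrization-F} (applied after $u_i\mapsto u_i^{-1}$) into $F_\nu(u_1^{-1},\ldots,u_n^{-1})$ in the integrand of \eqref{eq:spatial-check}, absorb the resulting $S_n$-symmetrization using the symmetry of the remaining factors, and then compute the multiple contour integral by iterated residues. Using the identities $(u_i^{-1}-qu_j^{-1})/(u_i^{-1}-u_j^{-1})=(u_j-qu_i)/(u_j-u_i)$, $(u_i^{-1}-s)/(1-su_i^{-1})=(1-su_i)/(u_i-s)$, and $1/(1-su_i^{-1})=u_i/(u_i-s)$, the substitution rewrites $F_\nu(u^{-1})$ as $(1-q)^n\prod_i\tfrac{u_i}{u_i-s}\sum_{\sigma\in S_n}\sigma\bigl(\prod_{i<j}\tfrac{u_j-qu_i}{u_j-u_i}\prod_i((1-su_i)/(u_i-s))^{\nu_i}\bigr)$. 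Every other factor in the integrand of \eqref{eq:spatial-check} is symmetric in the $u_i$'s, so the $\sigma$-symmetrization produces $n!$ identical copies which cancel the $1/n!$ in the prefactor. A short direct calculation simplifies $\prod_{i\neq j}\tfrac{u_i-u_j}{u_i-qu_j}\cdot\prod_{i<j}\tfrac{u_j-qu_i}{u_j-u_i}$ to $\prod_{i<j}\tfrac{u_i-u_j}{u_i-qu_j}$, reducing the LHS of \eqref{eq:spatial-check} to a scalar multiple of
\[J=\oint\!\cdots\!\oint\prod_i\frac{du_i}{2\pi\i}\prod_{i<j}\frac{u_i-u_j}{u_i-qu_j}\prod_i\frac{(1-su_i)^{\nu_i-1}}{(u_i-s)^{\nu_i+1}}\prod_{i,j}\frac{1-qu_iv_j}{1-u_iv_j}.\]

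Next I would evaluate $J$ by integrating $u_1,u_2,\ldots,u_n$ in turn and, at each step, enlarging the contour to infinity. Each $u_i$-integrand decays as $O(u_i^{-2})$, so the $u_i$-integral equals minus the sum of residues \emph{outside} the original contour. These outside residues sit at $u_i=v_j^{-1}$ (always simple) and at $u_i=s^{-1}$ (simple only when $\nu_i=0$; for $\nu_i\ge 1$ the zero of $(1-su_i)^{\nu_i-1}$ kills this pole). The key algebraic observation is a pair of telescoping cancellations. Setting $u_i=v_j^{-1}$ converts $\prod_{\ell>i}\tfrac{u_i-u_\ell}{u_i-qu_\ell}$ into $\prod_{\ell>i}\tfrac{1-v_ju_\ell}{1-qv_ju_\ell}$, and its product with the surviving $\prod_{\ell>i}\tfrac{1-qu_\ell v_j}{1-u_\ell v_j}$ in the remaining integrand is identically $1$ --- so this residue simply \emph{deletes $v_j$} from the list of $v$-parameters. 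Analogously, setting $u_i=s^{-1}$ introduces $\prod_{\ell>i}\tfrac{1-su_\ell}{1-qsu_\ell}$, which effectively \emph{shifts $s\mapsto qs$} in the next layer of the integration (the new effective $s$-type pole migrates from $s^{-1}$ to $(qs)^{-1}$, and so on). These two cancellations reduce the iterated residue computation to combinatorial bookkeeping.

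Proceeding by induction on the pair $(n,k)$, variables $u_i$ with $\nu_i>0$ (necessarily $i=1,\ldots,n-k$) must take $v_j^{-1}$-residues, and the resulting labelling is an injection $\sigma:\{1,\ldots,n-k\}\hookrightarrow\{1,\ldots,N\}$ whose image I denote $I$. After the remaining partial symmetrization inside this block, the sum over such choices assembles into the $\sum_{I,\sigma}$-structure of \eqref{eq:symmetrization-G'}, with the residues themselves producing the explicit factors $\prod_{i\in I}\tfrac{v_i}{(1-sv_i)(v_i-s)}$, $\prod_{i\in I,\,j\notin I}\tfrac{v_i-qv_j}{v_i-v_j}$, and the $\sigma$-symmetrized $\prod_{i<j\le n-k}\tfrac{v_i-qv_j}{v_i-v_j}\prod_i((v_i-s)/(1-sv_i))^{\nu_i}$. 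The $k$ remaining variables, having $\nu_i=0$, pick residues at the iteratively shifted points $s^{-1},(qs)^{-1},\ldots,(q^{k-1}s)^{-1}$; a telescoping of the resulting $v$-contributions at these successive shifted poles produces exactly $\prod_{j\notin I}(1-q^ksv_j)/(1-sv_j)$.

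The main obstacle I expect is carrying out this final telescoping: verifying that $k$ iterated $s$-type residues, together with the intermediate $(q^{m-1}s-v_j)$-factors coming from $\prod_{i,j}(1-qu_iv_j)/(1-u_iv_j)$, really collapse to the advertised $q^k$-shift $\prod_{j\notin I}(1-q^ksv_j)/(1-sv_j)$ rather than to some other accumulated expression. The cleanest way is to run the induction so that, after taking one such residue, the remaining $(n-1)$-dimensional integral is of the same shape as $J$ but with a shifted $s$ (and one fewer variable), allowing direct invocation of the hypothesis. Once this step is verified, matching the $(s^2;q)_n/(s^2;q)_k$ and $(1-q)^{n-k}$ prefactors in the RHS of \eqref{eq:symmetrization-G'} against the $(q;q)_nc(0^n)/(q-1)^n$ constant of \eqref{eq:spatial-check} is routine bookkeeping and completes the proof.
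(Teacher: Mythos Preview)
Your de-symmetrization step and the reduction to the integral $J$ match the paper exactly, and your treatment of the variables $u_1,\dots,u_{n-k}$ with $\nu_i>0$ (expand to infinity, pick up only the simple poles at $u_i=v_j^{-1}$, use the cancellation that deletes $v_j$ from the list) is correct and is precisely what the paper does for those variables.

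The gap is in your handling of the $k$ variables with $\nu_i=0$. When you expand such a $u_i$ to infinity, the poles outside the contour are \emph{not} only the $s$-type pole at $(q^{m-1}s)^{-1}$: the poles $u_i=v_j^{-1}$ for $j\notin I$ are also outside and contribute. Already in the simplest case $n=k=1$, $\nu=(0)$, the single residue at $u_1=s^{-1}$ gives $(1-s^2)^{-1}\prod_j (s-qv_j)/(s-v_j)$, which is not the target $(1-s^2)^{-1}\prod_j (1-qsv_j)/(1-sv_j)$; the missing $v_j^{-1}$-residues are needed to close the identity. So your ``telescoping'' over $s^{-1},(qs)^{-1},\dots$ cannot by itself produce $\prod_{j\notin I}(1-q^ksv_j)/(1-sv_j)$, and the induction you sketch (``same shape with $s\mapsto qs$'') breaks because after one $s^{-1}$-residue the remaining integrand has $s$-factors $(u_\ell-s)^{-1}(1-qsu_\ell)^{-1}$, which is not the $s\mapsto qs$ specialization of $J$.

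The paper sidesteps this by reversing both the order and the direction for the zero block: it integrates $u_n,u_{n-1},\dots,u_{n-k+1}$ \emph{first}, and for each of them it looks at poles \emph{inside} the contour rather than expanding to infinity. Inside, the only pole of the $u_n$-integrand is the simple one at $u_n=s$; taking it converts the cross factors $\prod_{i<n}(u_i-s)/(u_i-qs)$ so that the next variable $u_{n-1}$ has its unique inside pole at $qs$, and so on up to $q^{k-1}s$. After these $k$ inside residues the $v$-dependence accumulates cleanly to $\prod_{j}(1-q^ksv_j)/(1-sv_j)$, and only then does the paper expand the remaining $n-k$ variables to infinity. If you want to salvage your outside-only approach, you would need a separate partial-fraction identity summing the $s^{-1}$-type and $v_j^{-1}$-type contributions for the zero block; the paper's ordering makes this unnecessary.
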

\begin{remark}\label{rm:spatial-check} \textbf{(i)} The above statement coupled with Theorem \ref{th:symmetrization}(ii) can be used to provide a proof of Theorem \ref{th:spatial}. To do that one needs to argue that functions $\{F_\nu(u_1,\dots,u_n)\}_{\nu\in\s_n}$ are linearly independent so that decompositions on them yield well-defined coefficients, and that the set of functions 
$$
\prod_{1\le i\le n}\left(\frac{1}{1-su_i}\left(\frac{u_i-s}{1-su_i}\right)^a\prod_{\substack{1\le j\le N}}\frac{1-qu_iv_j}{1-u_iv_j}\right), \qquad
a\in\Z,\ N\in \Z_{\ge 0},\ v_1,\dots,v_N\in\C,
$$
is sufficiently rich (the extra parameter $a$ appeared due to Remark \ref{rm:symmetrization}(i)). We do not pursue this direction here as the proof of Theorem \ref{th:spatial} given in \cite{BCPS2} is simpler. 

\smallskip

\noindent\textbf{(ii)} One can similarly apply spatial orthogonality \eqref{eq:spatial} to the skew Cauchy identity \eqref{eq:pieri-F} with $\la=\varnothing$, thus obtaining a contour integral formula for the skew functions $G_{\nu/\mu}(v_1,\dots,v_N)$ with arbitrary $\mu,\nu\in \s_n$ and $v_1,\dots,v_N\in \C$. Via Theorem \ref{th:skew-R}, this can be used to obtain explicit expressions for the fully general higher spin $R$-matrix for the XXZ model. 
\end{remark}

\begin{proof}[Proof of Proposition \ref{pr:spatial-check}] All parts of the integrand in \eqref{eq:spatial-check} are symmetric in $u_i$'s. This implies that we can de-symmetrize $F_\mu(u_1^{-1},\dots,u_n^{-1})$ using Theorem \ref{th:symmetrization}(i) and equivalently rewrite \eqref{eq:spatial-check} as (also recalling that $c(0^n)=(s^2;q)_n/(q;q)_n$)
\begin{multline}\label{eq:spatial-desymm}
{(s^2;q)_n} \oint\cdots\oint\prod_{i=1}^n\frac{du_i}{2\pi\i}\prod_{1\le i< j\le n}\frac{u_i-u_j}{u_i-qu_j}\\ \times
\prod_{1\le i\le n}\left(\frac{1}{(u_i-s)(1-su_i)}\left(\frac{1-su_i}{u_i-s}\right)^{\nu_i}\prod_{\substack{1\le j\le N}}\frac{1-qu_iv_j}{1-u_iv_j}\right). 
\end{multline}

Let us first handle integration over the last $k$ variables (recall that $k$ is the number 
of zero coordinates in $\nu$). If $k\ge 1$, then inside the $u_n$-integration contour, the integrand viewed as a function in $u_n$ has only one simple pole at $u_n=s$. (Indeed, by our assumption on the contours, the points $s^{-1}, v_1^{-1}, \dots, v_N^{-1}$ lie outside the $u_n$-contour.) Evaluating the residue gives, from all the factors that involve $u_n$,
$$
\frac{1}{1-s^2}\prod_{i=1}^{n-1}\frac{u_i-s}{u_i-qs}\prod_{j=1}^N \frac{1-qsv_j}{1-sv_j}\,.
$$
Assuming $k\ge 2$, i.e. $\nu_{n-1}=0$, we see that in $u_{n-1}$ the pole at $u_{n-1}=s$ gets canceled by the last expression, which has also introduced a simple pole at $u_{n-1}=qs$. As for $u_n$, there are no other singularities inside the $u_{n-1}$-integration contour, and we can evaluate the residue at $u_{n-1}=qs$, which gives, writing only factors that depend on $u_{n-1}$, 
$$
\frac{1}{1-qs^2}\prod_{i=1}^{n-2}\frac{u_i-qs}{u_i-q^2s}\prod_{j=1}^N \frac{1-q^2sv_j}{1-qsv_j}\,.
$$
As we continue in this fashion, after $k$ steps our integral \eqref{eq:spatial-desymm} turns into
\begin{multline}\label{eq:spatial-without-zeroes}
\frac{(s^2;q)_n}{(s^2;q)_k} \oint\cdots\oint\prod_{i=1}^{n-k}\frac{du_i{(u_i-s)}}{2\pi\i(u_i-q^ks)}\prod_{j=1}^N\frac{1-q^ksv_j}{1-sv_j}\prod_{1\le i< j\le n-k}\frac{u_i-u_j}{u_i-qu_j}\\ \times
\prod_{1\le i\le n-k}\left(\frac{1}{(u_i-s)(1-su_i)}\left(\frac{1-su_i}{u_i-s}\right)^{\nu_i}\prod_{\substack{1\le j\le N}}\frac{1-qu_iv_j}{1-u_iv_j}\right). 
\end{multline}

All remaining $\nu_1\ge\dots\ge \nu_{n-k}$ are at least 1, and we cannot proceed in the same way. However, this means that there are no singularities at $u_i=s^{-1}$. Also, there is $\sim |u_i|^{-2}$ decay near $u_i=\infty$, which means that if we peel off the contours and deform them to $\infty$, we only need to take into account the poles at $u_i=v_j^{-1}$. No two $u_i$'s are allowed to share the same pole because of the factor $\prod_{i<j}(u_i-u_j)$ in the integrand. Evaluating (negative) residues at all possible pole choices yields
\begin{multline*}
\frac{(s^2;q)_n}{(s^2;q)_k} \sum_{\substack{\sigma:\{1,\dots,n-k\}\to\{1,\dots,N\}\\ \sigma \text{ is injective}}}\prod_{i=1}^{n-k}\frac{{v_{\sigma(i)}^{-1}-s}}{v_{\sigma(i)}^{-1}-q^ks}\prod_{j=1}^N\frac{1-q^ksv_j}{1-sv_j}\prod_{1\le i< j\le n-k}\frac{v_{\sigma(i)}^{-1}-v_{\sigma(j)}^{-1}}{v_{\sigma(i)}^{-1}-qv_{\sigma(j)}^{-1}}\\ \times
\prod_{1\le i\le n-k}\left(\frac{1}{(v_{\sigma(i)}^{-1}-s)(1-sv_{\sigma(i)}^{-1})}\left(\frac{1-sv_{\sigma(i)}^{-1}}{v_{\sigma(i)}^{-1}-s}\right)^{\nu_i}(1-q)v_{\sigma(i)}^{-1}\prod_{\substack{1\le j\le N\\ j\ne \sigma(i)}}\frac{1-qv_{\sigma(i)}^{-1}v_j}{1-v_{\sigma(i)}^{-1}v_j}\right). 
\end{multline*}
Setting $I=\mathrm{Ran}(\sigma)$ and simplifying, we see that the above expression coincides with the right-hand side of \eqref{eq:symmetrization-G'}. 
\end{proof}

\section{Degenerations}\label{sc:degeneration}
The aim of this section is to indicate certain degeneration of the above results as parameters $(q,s)$ tend to certain special values. All of these degenerations can and should be approached independently, and we hope to return to them in a future work.

\subsection{Hall-Littlewood symmetric polynomials}\label{ss:HL} The Hall-Littlewood symmetric polynomials are very well studied, and we refer to \cite[Chapter III]{Macdonald1995} for definitions and notations. 

In order to reach these polynomials from our definitions, it suffices to set $s=0$. Then the symmetrization formulas of Theorem \ref{th:symmetrization} imply that for any $M,N,n\ge 0$, $\mu=0^{m_0}1^{m_1}2^{m_2}\cdots\in\s_M^+$, $\nu=0^{n_0}1^{n_1}2^{n_2}\cdots\in\s_n^+$, we have
\begin{gather*}
F_\mu(u_1,\dots,u_M)=\prod_{i\ge 0}{(q;q)_{m_i}}\cdot P_{\mu}(u_1,\dots,u_M)={(q;q)_{m_0}}\cdot Q_{\mu}(u_1,\dots,u_n),\\
G_\nu(v_1,\dots,v_N)=\prod_{i\ge 1}{(q;q)_{n_i}}\cdot P_{\nu}(v_1,\dots,v_N)=Q_{\nu}(v_1,\dots,v_N),
\end{gather*}
with $P_\la$'s and $Q_\la$'s as in \cite[\S III.2]{Macdonald1995}, and the parameter $t$ of \cite{Macdonald1995} is identified with our $q$. 

Recalling Definition \ref{df:conjugated}, we can also compare the skew functions \cite[Ch. III, (5.11'), (5.14)]{Macdonald1995} and Definitions \ref{df:F}, \ref{df:G} to conclude that
\begin{align*}
F_{\lambda/\mu}^c(u)&=P_{\lambda/\mu}(u), \qquad \la\in\s_L^+,\quad \mu\in\s_{L+1}^+,\quad u\in \C,\\
G_{\la/\nu}(v)&=Q_{\la/\nu}(v), \qquad \la\in\s_L^+,\quad \mu\in\s_{L}^+,\quad v\in \C.
\end{align*}

Since the branching relations of Proposition \ref{pr:branching} are exactly the same as for the Hall-Littlewood polynomials, cf. \cite[Ch. III, (5.5), (5.6)]{Macdonald1995}, the formulas for $F_\mu,G_\nu$ above also follow from the formulas for the skew functions. 

The skew Cauchy identity of Theorem \ref{th:skew-cauchy-single} and Corollary \ref{cr:skew-cauchy} now matches the corresponding identity for the Hall-Littlewood polynomials, see \cite[Ch. VI, Ex. 7.6]{Macdonald1995} where this identity is stated in the more general context of Macdonald polynomials. The Cauchy identity of Corollary \ref{cr:cauchy} turns into \cite[Ch. III, (4.4)]{Macdonald1995}. 

The Pieri type rules of Corollary \ref{cr:pieri}, after decomposing the product-eigenvalue in the left-hand sides according to the Cauchy identity and comparing the same degree coefficients of both sides, coincide with \cite[Ch. III, (5.7), (5.7')]{Macdonald1995}.

The spatial orthogonality of Theorem \ref{th:spatial} is \cite[Ch. VI, (9.5)-(9.6)]{Macdonald1995}, where again the statement in \cite{Macdonald1995} is in the more general context of Macdonald polynomials. 

The principal specializations of $F_\mu$ and $G_\nu$ as in Proposition \ref{pr:principal} correspond to \cite[Ch. III, Ex. 2.1]{Macdonald1995}. I do not know however if analogs 
of Theorem \ref{th:skew-R} and Corollary \ref{cr:skew-G} that describe principal specialization of the skew functions $G_{\nu/\mu}$, have been considered in the Hall-Littlewood context. 

\subsection{Inhomogeneous Hall-Littlewood polynomials}\label{ss:iHL} Instead of simply setting $s=0$ as we did in Section \ref{ss:HL}, let us send $s\to 0$ but also simultaneously scale the variables $u_i=sz_i$, $v_i=sw_i$. Then the weights $w_u(i_1,j_1;i_2,j_2)$ of Definition \ref{df:weights} divided by $s^{j_1}$ turn into
\begin{align*}
\widetilde w_z(m,0,m,0)&=1, \qquad
\widetilde w_z(m,1,m,1)={z-q^m}, \\
\widetilde w_z(m+1,0,m,1)&={z},\qquad 
\widetilde w_z(m,1,m+1,0)={1-q^{m+1}},
\end{align*}
which implies, via Definitions \ref{df:F}, \ref{df:G}, that there exist limits
$$
\widetilde F_{\la/\mu}(z_1,\dots,z_m)=\lim_{s\to 0} F_{\la/\mu}(sz_1,\dots,sz_m),\qquad
\widetilde G_{\la/\nu}(w_1,\dots,w_n)=\lim_{s\to 0} G_{\la/\mu}(sw_1,\dots,sw_n),
$$
and they are (inhomogeneous) polynomials whose top homogeneous coefficients coincide with the corresponding Hall-Littlewood versions of $F_{\la/\mu}$ and $G_{\la/\mu}$ from the previous section. 

Taking the same limit in the symmetrization formulas of Theorem \ref{th:symmetrization}, we read (using the notations of Theorem \ref{th:symmetrization})
\begin{align*}
\widetilde F_\mu(z_1,\dots,z_M)&={(1-q)^M}\,\sum_{\sigma\in S_M}\sigma\left(\prod_{1\le i<j\le M}\frac{z_i-qz_j}{z_i-z_j}\cdot\prod_{i=1}^M \left({z_i-1}\right)^{\mu_i}\right),
\\
\widetilde G_\nu(w_1,\dots,w_N)&=\frac{(1-q)^N}{(q;q)_{N-n+k}}\sum_{\sigma\in S_N}\sigma\left(\prod_{1\le i<j\le N} \frac{w_i-qw_j}{w_i-w_j}\cdot \prod_{i=1}^{n-k} {w_i}\left({w_i-1}\right)^{\nu_i-1}\right).
\end{align*}

All the results we proved for $F_{\la/\mu}$ and $G_{\la/\mu}$ carry over to $\widetilde F_{\la/\mu}$ and $\widetilde G_{\la/\mu}$, and to my best knowledge, none of them have appeared in the literature before, with the exception of the orthogonality relations and Remark \ref{rm:q-hahn}(iii), whose analogs were proved in \cite{BCPS1}.

\subsection{The Schur like case: $q=0$}\label{ss:schur}  The Schur symmetric polynomials, see e.g. \cite[Chapter I]{Macdonald1995} can be thought of as specializations of the Hall-Littlewood symmetric polynomials with the parameter set to 0. Accordingly, we can set $q=0$ in our definitions of $F$- and $G$-functions. The vertex weights of Definition \ref{df:weights} then take the form
\begin{align*}
w_u^{(q=0)}(m,0,m,0)&=\frac{1-\mathbf{1}_{m=0}\cdot su}{1-su},\qquad 
w_u^{(q=0)}(m,1,m,1)=\frac{u-\mathbf{1}_{m=0}\cdot s}{1-su}, \\
w_u^{(q=0)}(m+1,0,m,1)&=\frac{\left(1-\mathbf{1}_{m=0}\cdot s^2\right)u}{1-su},\qquad
w_u^{(q=0)}(m,1,m+1,0)=\frac{1}{1-su}.
\end{align*}
The symmetrization formulas of Theorem \ref{th:symmetrization} take the form of ratios of two determinants:
\begin{align*}
F_\mu^{(q=0)}(u_1,\dots,u_M)&=\frac{\det\left[\dfrac{u_i^{M-j}}{1-su_i}\left(\dfrac{u_i-s}{1-su_i}\right)^{\mu_j}\right]_{i,j=1}^M}{\prod_{1\le i<j\le M}(u_i-u_j)}\,,\\
G_\nu^{(q=0)}(v_1,\dots,v_N)
&=(1-s^2)^{\mathbf{1}_{k=0}}\frac{\det\left[\dfrac{v_i^{N-j}}{1-sv_i}\left(\dfrac{v_i}{v_i-s}\right)^{\mathbf{1}_{\nu_j>0}}\left(\dfrac{v_i-s}{1-sv_i}\right)^{\nu_j}\right]_{i,j=1}^N}{\prod_{1\le i<j\le N}(v_i-v_j)}\,.
\end{align*}
At $s=0$ both formulas turn into the celebrated formula for the Schur polynomials as a ratio of two alternants \cite[Ch. I, (3.1)]{Macdonald1995}. 

One easily sees that for $q=0$, the conjugation of Definition \ref{df:conjugated} leaves all skew $G$-functions unaffected, which means that it can be removed from all the Cauchy and Pieri type formulas of Section \ref{sc:cauchy}. 

As in the previous section, all our results carry over to this degenerate case, and it remains unclear to me whether any of them have been considered before. 

\subsection{Inhomogeneous Schur polynomials}\label{ss:ischur} By combining the degeneration procedures of Sections \ref{ss:iHL} and \ref{ss:schur}, i.e. taking $q=0$, $s\to 0$, and scaling the variables $u_i=sz_i$, $v_i=sw_i$, we observe the vertex weights
\begin{align*}
\widetilde w_z^{(q=0)}(m,0,m,0)&=1,\qquad 
\widetilde w_z^{(q=0)}(m,1,m,1)={z-\mathbf{1}_{m=0}}, \\
\widetilde w_z^{(q=0)}(m+1,0,m,1)&=z,\qquad
\widetilde w_z^{(q=0)}(m,1,m+1,0)=1,
\end{align*}
and symmetrization formulas
\begin{align*}
\widetilde F_\mu^{(q=0)}(z_1,\dots,z_M)&=\frac{\det\left[{z_i^{M-j}}\left({z_i-1}\right)^{\mu_j}\right]_{i,j=1}^M}{\prod_{1\le i<j\le M}(z_i-z_j)}\,,\\
\widetilde G_\nu^{(q=0)}(w_1,\dots,w_N)
&=\frac{\det\left[{w_i^{N-j}}\left(\dfrac{w_i}{w_i-1}\right)^{\mathbf{1}_{\nu_j>0}}\left({w_i-1}\right)^{\nu_j}\right]_{i,j=1}^N}{\prod_{1\le i<j\le N}(w_i-w_j)}\,.
\end{align*}

These are inhomogeneous polynomials whose top homogeneous components coincide with the Schur polynomials. 

The polynomials $\widetilde F_\mu^{(q=0)}(z_1,\dots,z_M)$ bear a certain similarity to the so-called Grothendieck polynomials as presented in \cite{MotegiSakai1}, \cite{MotegiSakai2}, see also references therein, as well as \cite{LS}, \cite{Lenart} for much earlier works on those polynomials. However, on the surface the connection to integrable lattice models, skew functions, and the Cauchy formulas for these two objects appear to be different. 
It would be very interesting to establish a direct link. 

As in Sections \ref{ss:iHL}, \ref{ss:schur}, our results carry over to this case as well, and we have so far been unable to find them in the literature. 

\subsection{Trigonometric to rational limit: $q\to 1$}\label{ss:XXX} The limit we consider here is equivalent to the well-known transition from the XXZ to the XXX model. We take
$$
q=\exp(\epsilon),\quad s=\exp(\epsilon\zeta), \quad u_i=\exp(\epsilon x_i),\quad v_i=\exp(\epsilon y_i), \qquad \epsilon\to 0. 
$$ 
Such limit of the vertex weights of Definition \ref{df:weights} gives
\begin{align*}
w_x^{\mathrm{(rational)}}(m,0,m,0)&=\frac{m+\zeta+x}{\zeta+x}, \qquad
w_x^{\mathrm{(rational)}}(m,1,m,1)=\frac{m+\zeta-x}{\zeta+x}, \\
w_x^{\mathrm{(rational)}}(m+1,0,m,1)&=\frac{m+2\zeta}{\zeta+x},\qquad
w_x^{\mathrm{(rational)}}(m,1,m+1,0)=\frac{m+1}{\zeta+x}.
\end{align*}
Taking the limit of symmetrization formulas of Theorem \ref{th:symmetrization} yields
\begin{align*}
F_\mu^{\mathrm{(rational)}}(x_1,\dots,x_M)&=\frac{1}{\prod_{i=1}^M (\zeta+x_i)}\,\sum_{\sigma\in S_M}\sigma\left(\prod_{1\le i<j\le M}\frac{x_i-x_j-1}{x_i-x_j}\cdot\prod_{i=1}^M \left(\frac{\zeta-x_i}{\zeta+x_i}\right)^{\mu_i}\right)\,,\\
G_\nu^{\mathrm{(rational)}}(y_1,\dots,y_N)&=\frac{(2\zeta)_n}{(N-n+k)!(2\zeta)_k}\\ \times \sum_{\sigma\in S_N}\sigma\Biggl(\prod_{1\le i<j\le N} &\frac{y_i-y_j-1}{y_i-y_j}\cdot \prod_{i=1}^{n-k} \frac{1}{(\zeta+y_i)(\zeta-y_i)}\left(\frac{\zeta-y_i}{\zeta+y_i}\right)^{\nu_i}\cdot \prod_{j=n-k+1}^N\frac{k+\zeta+y_i}{\zeta+y_i}\Biggr),
\end{align*}
with the Pochhammer notation $(a)_m=a(a+1)\cdots(a+m-1)$ for $m\ge 1$, and 1 for $m=0$. 

Once again, our results also have such limits, and we have not seen those in the literature.

\end{document}